\journal{}
\newtheorem{theorem}{Theorem}[section] 
\newtheorem{proposition}{Proposition}[section]
\newtheorem{remark}{Remark}[section] 
\newcommand{\la}{\langle}
\newcommand{\ra}{\rangle}
\newcommand{\ve}{\vert}
\newcommand{\sech}{\text{sech}}
\newcommand{\curl}{\text{curl\,}}
\begin{document}

\begin{frontmatter}



\title{A Structure-Preserving Scheme for the Time-Dependent Ginzburg--Landau Model with BCS Gap Coupling}

\author[label1]{Boyi Wang} 
\author[label2]{Saurav Shenoy}
\author[label2]{Daniel Fortino}
\author[label1,label2]{Long-Qing Chen}
\author[label1]{Wenrui Hao}

\affiliation[label1]{organization={Department of Mathematics},
            addressline={The Pennsylvania State University, 54 McAllister St}, 
            city={State College},
            postcode={16801}, 
            state={PA},
            country={USA}}

\affiliation[label2]{organization={Department of Materials Science and Engineering},
            addressline={The Pennsylvania State University}, 
            city={State College},
            postcode={16801}, 
            state={PA},
            country={USA}}

\begin{abstract}
We propose a structure-preserving scheme for a hybrid model that couples the time-dependent Ginzburg–Landau (TDGL) equation of superconducting vortex dynamics and the nonlinear Bardeen–Cooper–Schrieffer (BCS) gap equation. This formulation is consistent with the classical TDGL equation in the near-critical temperature, while extending the applicability of the existing TDGL model to regimes beyond the critical temperature. The resulting system poses significant computational challenges due to its nonlinear and coupled structure.

To achieve stable and reliable simulations of the vortex dynamics and accompanying morphological transitions, we develop a maximum bound preserving, energy-stable implicit–explicit (IMEX) scheme. The structure preserving properties of the scheme are rigorously established, ensuring long-time stability and physical consistency. Through two- and three-dimensional simulations, the hybrid model successfully captures the temporal and spatial formation and alignment of vortices and the suppression of superconductivity under increasing magnetic fields, demonstrating both the accuracy and robustness of the proposed computational approach.


\end{abstract}



\begin{keyword} Ginzburg-Landau \sep superconductor  \sep energy stability \sep phase-field modeling

\end{keyword}

\end{frontmatter}

\section{Introduction}
Superconductors play a central role in a wide range of technological applications, from magnetic resonance imaging (MRI) and microwave filters to superconducting radio-frequency cavities in particle accelerators \cite{Padamsee08, Seidel15}. The remarkable property of zero electrical resistance makes them ideal for energy-efficient devices, yet their performance is often limited by dissipation mechanisms associated with magnetic vortices. Understanding and controlling these phenomena is therefore critical for optimizing superconducting materials and devices.

In type-II superconductors, magnetic flux penetrates the material in the form of quantized vortices, leading to the so-called mixed state. The dynamics of these vortices—including nucleation, motion, and annihilation—govern the resistive behavior of the system and are a primary source of energy dissipation \cite{SHD09, Tinkham04}. Consequently, accurately modeling vortex behavior is essential for predicting and improving the performance of superconducting devices under applied currents and magnetic fields.

A variety of theoretical models have been developed to describe superconducting phenomena. On the macroscopic level, the Ginzburg--Landau (GL) theory provides a phenomenological description of superconductivity near the critical temperature $T_c$ \cite{Ginz50}, while the time-dependent Ginzburg--Landau (TDGL) equations extend this framework to capture the temporal evolution of the superconducting order parameter \cite{Chapman92, Tang95}. Microscopically, the Bardeen--Cooper--Schrieffer (BCS) theory \cite{Bardeen57} and Gor’kov’s derivation \cite{Gor68} connect the GL order parameter to the underlying Cooper pair condensate, providing a rigorous foundation for the macroscopic models.

The phenomenological TDGL equation originates from the Ginzburg--Landau theory of superconductivity \cite{Ginz50} and its microscopic justification via Gor'kov’s derivation from the BCS theory \cite{Gor68}. Near the superconducting critical temperature $T_c$, the TDGL equation effectively models the dynamics of $\psi$ coupled with the electromagnetic potentials, thereby capturing both the phase transition process and the temporal evolution of the superconducting state \cite{Chapman92, Tang95}. When coupled with the magnetic field, the complex order parameter $\psi$ evolves according to the dynamic law \cite{Oripov20, Tang95}, which describes the relaxation from a perturbed equilibrium state \cite{Oripov20}:
\begin{equation}
\frac{\hbar^2}{2m^*D}\left(\partial_t+i\frac{e^*}{\hbar}\phi\right)\psi = -\frac{\delta E}{\delta \psi^*},\quad\tilde\sigma\left(A_t +\nabla\phi\right)=-\frac{\delta E}{\delta A}\label{eq-evo}.
\end{equation}
with $\phi$ the electric potential and $\delta E/\delta \psi^*,\,\delta E/\delta A$ the variational derivatives of the free energy $E$.
\begin{equation}
E = \int_\Omega\left[ \frac1{2\eta_1 m^*}\left\ve \hbar\nabla\psi-ie^*A\psi\right\ve ^2 +F(\ve \psi\ve ^2)+ \frac{\ve \curl A-H\ve ^2}{2\eta_2\mu_0} \right]dx.\label{tot-ene}
\end{equation}
The second equation of \eqref{eq-evo} is the modified Ampere law from the Maxwell equation and  $\tilde{\sigma}$ represents the relaxation time. Here, the function $\psi^*=\overline{\psi}$ is the conjugate of the complex function $\psi$, and $A$ is the magnetic potential field, and the precise definitions of the parameters and physical constants are given in the next section.  When the diffusion coefficient $\eta_1, \eta_2 = 1$, the free energy \eqref{tot-ene} recovers the classical definition (see e.g. \cite{Du94b, Kop01, Oripov20, Tang95}). This framework naturally accounts for vortex nucleation, motion, and annihilation in type-II superconductors, as vortices appear as topological defects where $\vert\psi\vert=0$, and the total phase equals multiples of $2\pi$. Such vortex dynamics play a central role in determining the resistive behavior and dissipation mechanisms of superconductors under applied currents and fields.  

Comprehensive expositions of these ideas and their experimental implications can be found in the monographs \cite{Arm20, Tinkham04}. Recent work has shown that the superconducting–insulating transition observed in 2D disordered samples cannot be explained within the conventional theoretical framework \cite{Yang24}. To address this, a thermodynamic model:
\begin{equation}\label{gapeq-1}
\eta \nabla^2|\Delta(x)|=\frac{|\Delta(x)|}U+\sum_{k} \frac{|\Delta(x)|}{2\sqrt{\xi_k^2+|\Delta(x)|^2}}\sum_{\pm}\tanh(\frac{1}{2k_BT} E_{k}^{\pm}).
\end{equation} 
was formulated by adopting an energy-based approach for the complex superconducting gap $\Delta$ \cite{Yang24}, where $E^\pm_k=\sqrt{\xi_k^2+|\Delta(x)|^2}\pm p_k(x)$ are the Bogoliubov quasi-particle spectra, $\eta$ is the diffusion coefficient, and $U$ is a coupling constant. The nonlinear term in \eqref{gapeq-1} resembles the classical BCS gap equation \cite{Bardeen57,Watanabe25}:  

\begin{equation}\label{gapeq}
\Delta(k)=\sum_{k'} \frac{V_{k,k'}\Delta(k')}{E_{k'}}\tanh(\frac1{2k_BT}E_{k'}),
\end{equation}
with $E_k=\sqrt{\xi_k^2+|\Delta(k)|^2}$ and $V_{k,k'}$ denoting the effective interaction between Cooper pairs. Importantly, the microscopic formulation in \cite{Yang24}, derived within the quantum field theory framework, remains valid even as the temperature increases away from zero. In the homogeneous case without phase fluctuation ($p_k=0$), the gap equation \eqref{gapeq-1} reduces exactly to the BCS equation \eqref{gapeq}. Furthermore, Gorkov \cite{Gor68} established a connection between the GL and BCS theories by showing that the order parameter $\phi$ is proportional to the pair potential $\Delta$ in certain limiting cases. Motivated by this connection, we extend the framework to incorporate time dependence and magnetic fields.

\section{Model problem}
In this paper, we investigate a dimensionless TDGL model coupled with the magnetic field equation, whose nonlinear term is derived in analogy to the BCS gap equation. The model originates from the dimensional system \eqref{eq-evo} - \eqref{tot-ene}. After nondimensionalization, we work with the dimensionless parameters: the normalized Debye frequency \( \tilde\omega \),  the temperature \( \beta \), and the inhomogeneity \( \delta(x,t) \) (see Table.~ \ref{tab:parameters}), which capture the material properties and external effects. The resulting system is given by

\begin{subequations}\label{eq-m}
\begin{numcases}{}
\psi_t + i\phi\psi + \left(\frac{i}{\kappa}\nabla + A\right)^2\psi + f(|\psi|^2)\psi = 0,\quad x\in\Omega,\, t>0,\label{eq1a}\\
\sigma(A_t + \nabla\phi) + \curl\curl A + \frac{1}{\kappa}\text{Im}(\psi\nabla\psi^*) + A|\psi|^2 \notag\\
\phantom{xx}= \curl H,\quad x\in\Omega,\, t>0,\label{eq1b}
\end{numcases}
\end{subequations}

subject to the boundary conditions
\begin{equation}
\left(\frac{i}{\kappa}\nabla\psi + A\psi\right)\cdot n = 0,\quad n\times \curl A = n\times H,\quad x\in\partial\Omega,\, t>0,\label{bv-1}
\end{equation}
and initial conditions
\begin{equation}
\psi(x,0) = \phi_0(x),\quad A(x,0) = A_0(x),\quad x\in{\Omega}.\label{iv-1}
\end{equation}

The novel nonlinearity \( f(|\psi|^2) \) takes the form
\begin{equation}\label{pot}
f(|\psi|^2) = 1 - \int_0^{\tilde\omega} \frac{\nu_0\,\mathrm{d}\xi}{4\sqrt{\xi^2 + |\psi|^2}} \sum_{\pm}\tanh\left(\beta(\sqrt{\xi^2 + |\psi|^2} \pm \delta)\right),
\end{equation}
which corresponds to the derivative of a free-energy-type potential function \( F(|\psi|^2) \):
\begin{equation}\label{frE2}
F(|\psi|^2) = |\psi|^2 - \frac{\nu_0}{2\beta} \int_0^{\tilde\omega} \sum_{\pm} \ln \cosh\left(\beta(\sqrt{\xi^2 + |\psi|^2} \pm \delta)\right)\,d\xi.
\end{equation}

\begin{table}[htbp]
\centering
\caption{List of dimensionless and dimensional parameters appearing in the TDGL simulation.}
\renewcommand{\arraystretch}{1.3}
\small
\begin{tabular}{m{0.35\textwidth} m{0.55\textwidth}}
\toprule
\centering \textbf{Dimensionless parameters} & 
\centering \textbf{Dimensional parameters} \tabularnewline
\midrule
\centering
Ginzburg--Landau parameter: $\;\kappa \ge0.707$ (type-II) \par
Normalized Debye frequency: $\;\tilde{\omega} = \dfrac{\omega_D}{\psi_{00}} \sim 20\text{--}50$ \par
Dimensionless temperature: $\;\beta = \dfrac{\psi_{00}}{2k_B T}$ (typically $\mathcal{O}(0\text{--}10)$ near $T_c$) \par
Artificial inhomogeneity: $\;\delta(x,t)$ 
&
\centering
Effective mass of Cooper pair $m^*$ \par
Planck constant $\hbar$ \par
Charge of Cooper pair $e^* = 2e/c$ \par
Magnetic constant $\mu_0$ \par
Speed of light $c$ \par
Diffusion parameter $D$ \par
Debye frequency $\omega_D$, $\psi_{00} =\Delta(T=0)$ \par
Boltzmann constant $k_B$ \par
Energy constant $\eta$ \par
Critical/Temperature $T_c$, $T$ \tabularnewline
\bottomrule
\end{tabular}
\label{tab:parameters}
\end{table}

The derivation of this model will be presented in the following section. Here, $\Omega$ is a bounded domain in $\mathrm{R}^d(d=2,3)$, $n$ is the unit outer normal vector, $i=\sqrt{-1}$, the electric potential $\phi(x,t)$ is a real unknown scalar-valued function, the unknown real vector-valued function $A(x,t)$ is the magnetic vector potential, the unknown complex scalar-valued function $\psi(x,t)$ is the complex order parameter, the function $\psi^*=\overline{\psi}$ is the conjugate of $\psi$, the Ginzburg-Landau parameter $\kappa$ is an important positive material constant representing the ratio of penetration length to the coherence length, and the real vector-valued function function $H$ is a given external magnetic field.

This formulation ensures that the system remains dissipative, governed by the generalized energy functional:
\begin{equation}
E(t) = \frac{1}{2}\int_\Omega \left[ \left| \left( \frac{i}{\kappa}\nabla + A \right)\psi \right|^2 + F(|\psi|^2) + |\curl A - H|^2\right]\,dx. \label{tot-ene-1}
\end{equation}
The constant $\nu_0$ is determined by
\begin{equation}\label{nu-0}\nu_0\int_0^{\tilde\omega}\frac{\tanh(\beta_0\xi)}{2\xi}d\xi=1\end{equation}
with $\beta_0$ satisfying $\beta\to \beta_0$ when the temperature $T\to T_c$, where $T_c$ is the critical temperature of the superconductor.

Moreover, $x_0\rightarrow 1$ when $\beta\rightarrow \infty$, a property can be numerically verified for $f(x_0)=0$. When one takes $f(\ve \psi\ve ^2) =\frac12(\ve \psi\ve ^2-1)$ in the system \eqref{eq1a}-\eqref{eq1b}, we recover the classical TDGL equation (see e.g. \cite{Du92, Du94, Du94b, Du98, Oripov20}), whose solution is unique and regular under a gauge transformation such as the temporal gauge $\phi=0$, the Coulomb gauge $div A=0$ or the Lorentz gauge $\phi=-div A$ (e.g. \cite{Chen97, Du94, Ma23}) under an additional boundary condition. 

The present model extends the recent work \cite{Yang24} on disordered 2D superconductors, grounded in the BCS theory \cite{Bardeen57} and the nonlinear gap equation \eqref{gapeq}, which serves as the basis for the potential \eqref{pot} in our formulation. In contrast to previous studies, our hybrid TDGL model inherits the thermodynamic consistency of \cite{Yang24} while incorporating time dependence and magnetic fields. To our knowledge, this is the first time-dependent PDE framework capable of describing superconducting behavior beyond the near-critical regime and up to finite temperatures under an external magnetic field.  

On the theoretical side, we establish consistency with the classical Ginzburg--Landau model through a formal asymptotic expansion as $T \to T_c$, thereby connecting the hybrid model with the GL framework. On the numerical side, while many schemes have been developed for phase-field and classical TDGL equations (see, e.g., \cite{Fortino24, Gao14, Gao16, Gao17, Gao19,  Hong22,  Li17b, Li20, Mu97, Mu98, Qiao11, SXY18, SXY19, Tang19}), our approach distinguishes itself by proposing a simple and stable structure-preserving IMEX scheme. Compared with existing methods, the scheme is easy to implement while still ensuring both the maximum bound principle and energy stability under the zero-electric gauge $\phi=0$. Simulations in 2D and 3D with inhomogeneity and external fields confirm the model’s effectiveness in capturing vortex dynamics and thermodynamic behavior.

This paper is organized as follows. The remainder of this section supplements the motivation with a formal derivation of the model and its asymptotic reduction to the classical TDGL equation. Section~\ref{sec-scheme} presents the numerical scheme along with proofs of its structure-preserving properties. Section~\ref{sec-Num} reports simulation results illustrating vortex phenomena. Conclusions are drawn in Section~\ref{sec-con}.

\subsection{Non-dimensionalization of the Model}
This section presents the derivation of the evolution equations \eqref{eq1a}--\eqref{iv-1}, based on the time-dependent Ginzburg--Landau theory and the thermodynamic framework for disordered 2D superconductors developed in \cite{Yang24}. To demonstrate consistency with the classical GL theory, the asymptotic behavior as the temperature $T \to T_c$ is analyzed, and the classical TDGL equation with cubic nonlinearity is formally recovered via a small-parameter expansion under suitable scaling assumptions. 

As a first step, we derive the gap potential function \eqref{frE2} by embedding the gap equation into the TDGL framework, which leads to the following hybrid model.
\begin{equation}
    \psi=\sum_{k} \frac{\nu_{0}\psi}{4E_k}\sum_\pm\tanh\left(\frac1{2k_BT}(E_k\pm\delta_k )\right).\notag
\end{equation}
Here, the spectrum splitting effect in the BCS gap equation of the isotropic case is approximated using the form $E_k\pm\delta_k(x)$, and $\nu_0$ is computed at temperature $T=0$. Accordingly, the function $F$ in the free energy \eqref{tot-ene} is given by
\begin{equation}\label{frE-200}
    F(\ve\psi\ve^2) = |\psi|^2-k_BT\nu_{0}\sum_{k}\sum_\pm\ln \cosh\left(\frac1{2k_BT}(E_k\pm\delta_k)\right).
\end{equation}
By approximating the summation by using a integral and taking $\delta_k=\delta(x)$, the non-dimensionalized nonlinear function $f(|\psi|^2)=F'(|\psi|^2)$ in \eqref{eq1a} and the corresponding potential function $F(|\psi|^2)$ are derived respectively as \eqref{pot} and \eqref{frE2} from \eqref{frE-200}. Lastly, the dynamical equation is established from the time evolution from a perturbed state, as described in \eqref{eq-evo}, and by using the free energy  \eqref{tot-ene} in which the nonlinear function $F(|\psi|^2)$ is specified as \eqref{frE2}.

The TDGL equation can be derived under the energy framework, by evolving the complex order parameter $\psi$ according to \eqref{eq-evo}. The parameters in \eqref{eq-evo} - \eqref{tot-ene} are defined as in Table~\ref{tab:parameters}:


Through the direct calculation, the following functional derivatives
\begin{subequations}
\begin{numcases}{}
\frac{\delta E}{\delta \psi^*}=\frac{\hbar^2}{2\eta_1 m^*}(\nabla-i\frac{e^*}{\hbar}A)^2\psi+\psi\left(1-\sum_k\frac{\nu_{0}}{4E_k}\sum_\pm \tanh\left(\frac{E_k^\pm}{2k_BT}\right)\right)\label{var1}\\
\frac{\delta E}{\delta A}=\frac{1}{\eta_2\mu_0}\curl(\curl A-H)+ \frac{\hbar e^*}{2\eta_1 m^*i}(\psi\nabla\psi^*-\psi^*\nabla\psi)+\frac {e^{*2}}{\eta_1 m^*}\ve\psi\ve^2A\label{var2}
\end{numcases}
\end{subequations} are obtained from \eqref{tot-ene}, which found the foundation for the subsequent derivation of the TDGL equations.

Now, substituting the functional derivatives \eqref{var1}-\eqref{var2} into \eqref{eq-evo}, we obtain the following hybrid TDGL equation 
\begin{subequations}
    \begin{numcases}{}
    \frac{\hbar^2}{2m^*D}\left(\partial_t+i\frac{e^*}{\hbar}\phi\right)\psi =\frac{\hbar^2}{2\eta_1 m^*}(\nabla-i\frac{e^*}{\hbar}A)^2\psi\notag\\
    \phantom{\frac{\hbar^2}{2m^*D}\left(\partial_t+i\frac{e^*}{\hbar}\phi\right)\psi=}+\psi\left(1-\sum_k\frac{\nu_{0}}{4E_k}\sum_\pm \tanh\left(\frac{E_k^\pm}{2k_BT}\right) \right),\label{d-eq-1}\\
    \tilde\sigma\left(A_t +\nabla\phi\right)+\frac{1}{\mu_0\eta_2}\curl(\curl A-H)= \frac{\hbar e^*}{\eta_1 m^*}\text{Im}(\psi^*\nabla\psi)-\frac {e^{*2}}{\eta_1 m^*}\ve\psi\ve^2A.\label{d-eq-2}
    \end{numcases}
\end{subequations}
Here, $E_k^\pm=E_k\pm\delta_k$.

{Furthermore, to facilitate the analysis and numerical computation, a nondimensionalized model will be considered. This transformation is achieved by introducing suitable characteristic scales that normalize the physical quantities involved. Specifically, the following dimensionless parameter will be defined: $\xi^2 = \frac{\hbar^2}{2m^*\eta_1}$ and $\psi_{00} = \ve\Delta\ve\ve_{T=0}$. The dimensionless time variable is $t = \frac{\xi^2\eta_1}{D} \hat{t}$. Similarly, the spatial coordinate is rescaled as $x = \lambda \hat{x}$, and $\lambda = \sqrt{\frac{m^*\eta_1}{\mu_0\eta_2 e^{*2} \psi_{00}^2}}$ denotes the London penetration depth. Furthermore, the GL parameter is defined by $\lambda/\xi$. The flux quantum, a fundamental unit of magnetic flux in superconductivity, is given by $\phi_0 = \frac{2\pi\hbar}{e^*}$. The vector potential is then rescaled as $A = \frac{\phi_0}{2\pi\xi} \hat{A}$, while the magnetic field is normalized to $H = \frac{\phi_0}{2\pi\lambda\xi} \hat{H}$. Lastly, the scalar potential undergoes a transformation as $\phi = \frac{\lambda D \phi_0}{2\pi\eta_1\xi^3} \hat{\phi}$, and in \eqref{eq1b} the evolution parameter $\sigma = \frac{\tilde\sigma\mu_0D\eta_2}{\eta_1}\kappa^2$.}

{In addition, introduce the dimensionless parameters $\hat\beta = \frac{\psi_{00}}{2k_B T}$, $\tilde \omega = \frac{\omega_D}{\psi_{00}}$, and the dimensionless inhomogeneity $\hat \delta = \frac\delta{\psi_{00}}$. $\psi_{00}$ is approximated by $Ck_BT_c$, following the BCS approximation of the zero temperature energy gap in the pure state. After substituting these dimensionless variables into the governing equations \eqref{d-eq-1}-\eqref{d-eq-2}, the resulting expressions no longer depend on the original physical units. To simplify notation, the hat notation is subsequently removed. The desired dimensionless hybrid TDGL equations \eqref{eq1a}-\eqref{eq1b} with \eqref{pot} are derived.}

\subsection{Asymptotic behavior when $T\rightarrow T_c$}
One effective approach to understanding the behavior of the system in the weak fluctuation regime is to perform an asymptotic expansion in terms of a small parameter. The relation between BCS and the TDGL equation has been shown in \cite{Gor68}. Here, we apply a formal asymptotic expansion to analyze the limiting behavior of \eqref{eq1a}-\eqref{eq1b} when the temperature approaches the critical temperature $T \to T_c$, which determines the order of the small parameter $\varepsilon$ and yields a TDGL-type equation with polynomial form at leading order. We consider the limiting regimes $\beta=\frac{\beta_0}{1-\varepsilon^l},\, l=1,2$, and $\varepsilon$ is the dimensionless temperature parameter.

\begin{proposition}[Formal asymptotic reduction as $T \to T_c$]
Consider the hybrid TDGL system \eqref{eq-m}--\eqref{iv-1} with the nonlinear term $f(|\psi|^2)$ defined in \eqref{pot}. 
Let $\varepsilon>0$ be the dimensionless parameter associated with the scaling $\beta = \beta_0/(1-\varepsilon^2)$ and assume the expansions
\[
\psi = \varepsilon \psi_0,\quad A=\varepsilon A_0,\quad \phi=\varepsilon^2 \phi_0,\quad H=\varepsilon^2 H_0,\quad \delta\simeq \varepsilon.
\]
Then, as $\varepsilon\to 0$, the leading-order approximation of the system formally reduces to the classical TDGL-type system
\begin{subequations}\label{eq-GL-limit}
\begin{numcases}{}
    \psi_{0\tau} + i\phi_0\psi_{0} + \Big(\tfrac{i}{\kappa}\nabla + A_0\Big)^2 \psi_0
    + \tfrac{\nu_0}{2}\Big(\hat\gamma_{21}|\psi_0|^2 -1 + \hat\gamma_{23}\delta_0^2\Big)\psi_0 = 0, \\[6pt]
    \sigma(A_{0\tau}+\nabla\phi_0) + \curl\curl A_0
    + \tfrac{1}{\kappa}\text{Im}(\psi_0\nabla\psi_0^*) + A_0|\psi_0|^2 = \curl H_0,
\end{numcases}
\end{subequations}
where $\tau=\varepsilon^2 t$, $\hat x=\varepsilon x$, and the coefficients $\hat\gamma_{21},\hat\gamma_{23}$ are determined by the integrals in \eqref{coef1}.
\end{proposition}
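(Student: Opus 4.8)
The plan is to carry out a formal multiple-scale expansion. I would insert the prescribed ansatz into \eqref{eq1a}--\eqref{eq1b}, re-express every derivative through the slow variables $\tau=\varepsilon^2 t$ and $\hat x=\varepsilon x$ (so that $\partial_t=\varepsilon^2\partial_\tau$ and $\nabla=\varepsilon\hat\nabla$), expand the temperature-dependent nonlinearity $f$ of \eqref{pot} in powers of $\varepsilon$, and match terms at leading order. The organizing observation is that, under this scaling, every term of each equation is $O(\varepsilon^3)$, so dividing through by $\varepsilon^3$ isolates a closed system for the leading-order fields $(\psi_0,A_0,\phi_0)$.

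First I would track the orders of the linear and gauge-covariant terms. Since $\partial_t=\varepsilon^2\partial_\tau$ and $\nabla=\varepsilon\hat\nabla$, one has $\psi_t=\varepsilon^3\psi_{0\tau}$, $i\phi\psi=\varepsilon^3 i\phi_0\psi_0$, and the operator identity $\left(\tfrac{i}{\kappa}\nabla+A\right)=\varepsilon\left(\tfrac{i}{\kappa}\hat\nabla+A_0\right)$, so that the covariant Laplacian contributes $\varepsilon^3\left(\tfrac{i}{\kappa}\hat\nabla+A_0\right)^2\psi_0$. The same bookkeeping in \eqref{eq1b} shows that $\sigma(A_t+\nabla\phi)$, $\curl\curl A$, $\tfrac{1}{\kappa}\text{Im}(\psi\nabla\psi^*)$, $A|\psi|^2$, and $\curl H$ are each $O(\varepsilon^3)$; dividing by $\varepsilon^3$ therefore yields the second line of \eqref{eq-GL-limit} immediately, with no expansion of the nonlinearity needed.

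The crux is the expansion of $f(|\psi|^2)$. Writing $s=\sqrt{\xi^2+\varepsilon^2|\psi_0|^2}$, $\delta=\varepsilon\delta_0+o(\varepsilon)$, and $\beta=\beta_0(1+\varepsilon^2+O(\varepsilon^4))$, I would expand the integrand of \eqref{pot} to second order in $\varepsilon$. At $\varepsilon=0$ the normalization \eqref{nu-0} of $\nu_0$ forces $f=0$, so the first surviving contribution is $O(\varepsilon^2)$ and splits into three diagonal pieces (the mixed cross terms entering only at $O(\varepsilon^3)$ and higher): (i) the shift $\delta\beta=\beta_0\varepsilon^2$, which after $\int_0^{\tilde\omega}\sech^2(\beta_0\xi)\,d\xi=\beta_0^{-1}\tanh(\beta_0\tilde\omega)$ and $\tanh(\beta_0\tilde\omega)\approx1$ (legitimate since $\beta_0\tilde\omega\gg1$ in the physical regime) produces the constant $-1$; (ii) the expansion $s=\xi+\tfrac{\varepsilon^2|\psi_0|^2}{2\xi}+O(\varepsilon^4)$, which generates the cubic coefficient $\hat\gamma_{21}$; and (iii) the $\delta$-dependence of $\sum_\pm\tanh(\beta(s\pm\delta))$, whose $O(\varepsilon)$ terms linear in $\delta$ cancel between the two signs (removing any odd-order contribution) while the surviving quadratic terms yield $\hat\gamma_{23}\delta_0^2$. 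Collecting these gives $f=\tfrac{\nu_0}{2}\varepsilon^2\big(\hat\gamma_{21}|\psi_0|^2-1+\hat\gamma_{23}\delta_0^2\big)+O(\varepsilon^3)$, where $\hat\gamma_{21}$ and $\hat\gamma_{23}$ are explicit convergent $\xi$-integrals over $[0,\tilde\omega]$; multiplying by $\psi=\varepsilon\psi_0$ renders the nonlinear term $O(\varepsilon^3)$ (the remainder contributing only at $O(\varepsilon^4)$), and dividing the first equation by $\varepsilon^3$ produces the first line of \eqref{eq-GL-limit}.

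I expect the main obstacle to be the $\xi$-integrals near $\xi=0$ in step (ii)---equivalently, justifying the interchange of the $\varepsilon$-expansion with the integral. Because $s=\sqrt{\xi^2+\varepsilon^2|\psi_0|^2}$ behaves like $\varepsilon|\psi_0|$ rather than like $\xi$ when $\xi\lesssim\varepsilon$, the expansion $s=\xi+\tfrac{\varepsilon^2|\psi_0|^2}{2\xi}+O(\varepsilon^4)$ is not uniform near the origin, and the individual coefficients it generates appear singular at $\xi=0$. The resolution---and the reason the result is stated as a \emph{formal} reduction---is that these apparent singularities cancel: the contributions from $-\tanh(\beta_0\xi)/\xi^2$ and $\beta_0\sech^2(\beta_0\xi)/\xi$ combine to leave a bounded integrand, so $\hat\gamma_{21}$ is in fact a convergent integral. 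A fully rigorous justification would require either respecting this cancellation before integrating or a split-integral estimate isolating an $O(\varepsilon)$-neighborhood of the origin, but for the formal matching asserted here the termwise computation suffices.
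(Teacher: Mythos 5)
Your proposal is correct and follows essentially the same route as the paper: insert the scalings, expand $\beta E_\pm$ and the $\tanh$ integrand of \eqref{pot} in powers of $\varepsilon$, use the normalization \eqref{nu-0} to kill the $O(1)$ part of $f$, note that the terms odd in $\delta$ cancel between the $\pm$ branches, and match all remaining terms at $O(\varepsilon^3)$ in the slow variables $\tau=\varepsilon^2 t$, $\hat x=\varepsilon x$, recovering exactly the coefficients $\hat\gamma_{21}$, $\hat\gamma_1\approx 1$, $\hat\gamma_{23}$ of \eqref{coef1}. The only differences are cosmetic: you omit the paper's preliminary analysis of the rejected scaling $\beta=\beta_0/(1-\varepsilon)$ (not needed for the statement as given), and you add a worthwhile observation the paper leaves implicit, namely that the apparent $\xi^{-3}$ and $\xi^{-2}$ singularities in the $\hat\gamma_{21}$ integrand cancel near $\xi=0$, so the coefficient integrals converge.
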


\begin{proof}[Formal derivation]

We consider $\delta\simeq\varepsilon^\sigma$ for $\sigma=1$ in \eqref{pot}, while the results can also been extended to other values such as $\sigma=2,3...$. Specifically, we calculate the formal asymptotic expansion in the following scale $\psi = \varepsilon\psi_0$, $A = \varepsilon A_0$, $\phi = \varepsilon^2\phi_0$, and $H = \varepsilon^2H_0$. Here, $\psi_0$ is the leading-order solution obtained from the modified equation above, which captures the dominant behavior of the order parameter in the small $\varepsilon$ limit.

To derive the leading-order asymptotic expansion in the limit $\beta=\frac{\beta_0}{1-\varepsilon}$, the rescaled order parameter \(\psi = \varepsilon \psi_0\) is substituted into the equations \eqref{eq1a}-\eqref{eq1b} and the Taylor expansion is applied. Taking the leading order terms, we have
\begin{equation*}
    E_\pm =\sqrt{\xi^2 + \lvert\varepsilon \psi_0\rvert^2}+\varepsilon\delta_0\approx \xi\pm\varepsilon\delta_0+\varepsilon^2\frac{\vert\psi_0\vert^2}{2\xi},
\end{equation*}
and the expansion of $\beta E_\pm$:
\begin{align}\label{tl-e}
&\beta E_\pm = \frac{\beta_0}{1-\varepsilon}(\sqrt{\xi^2 + \lvert\varepsilon \psi_0\rvert^2} \pm \varepsilon\delta_0) \notag\\
&\approx\beta_0\left[ \xi+\varepsilon(\xi\pm\delta_0) +\varepsilon^2\left(\xi\pm\delta_0+\frac{\vert\psi_0\vert^2}{2\xi}\right)\right].
\end{align}
In addition, $\tanh(\theta+\Delta\theta) = \tanh(\theta)+\Delta\theta\sech^2(\theta)-(\Delta\theta)^2\tanh(\theta)\sech^2(\theta)+\cdots$. Substituting  \eqref{tl-e} in the taylor expansion of $\tanh(x)$, taking $\theta = \beta_0\xi$, we have
\begin{align}
    \tanh(\beta E_+)+\tanh(\beta E_-)\approx 2\tanh(\beta_0\xi)+2\varepsilon\beta_0\xi\sech^2(\beta_0\xi)+\varepsilon^2R_0(\xi),\label{tyep}
\end{align}
where
\begin{align*}
    &R_{0}(\xi)=2\beta_0\sech^2(\beta_0\xi)\left(\xi+\frac{\vert\psi_0\vert^2}{2\xi}\right)-2\beta_0^2(\xi^2+\delta_0^2)\tanh(\beta_0\xi)\sech^2(\beta_0\xi).
\end{align*}
Using the expansion $\frac1{\sqrt{\xi^2 + \lvert\varepsilon \psi_0\rvert^2}} \approx \frac1\xi-\frac{\varepsilon^2\vert\psi_0\vert^2}{2\xi^3}$, we have
\begin{align*}
    &\frac{\tanh(\beta E_+)+\tanh(\beta E_-)}{2(E_++E_-)}\\
    &\approx \frac14(\tanh(\beta E_+)+\tanh(\beta E_-))\left(\frac1\xi-\frac{\varepsilon^2\vert\psi_0\vert^2}{2\xi^3}\right)\\
    &\approx\frac{\tanh(\beta_0\xi)}{2\xi}+\varepsilon\frac{\beta_0}2\sech^2(\beta_0\xi)+\varepsilon^2\left(\frac1{4\xi}R_{0}(\xi)-\frac{\vert\psi_0\vert^2}{4\xi^3}\tanh(\beta_0\xi)\right).
\end{align*}
The above expansion corresponds to the integrand of the nonlinear term $f$. Furthermore, the following coefficients $1-\hat \gamma_0$, $\hat \gamma_1$, $1-\hat \gamma_2$ can be explicitly determined from the expansion respectively. Denoting
\begin{align}\label{coef1}
    &f \approx 1-\nu_0(\hat \gamma_0+\varepsilon\hat \gamma_1+\varepsilon^2\hat \gamma_2)=\frac{\nu_0}2\left[-\hat\gamma_1\varepsilon+\varepsilon^2\left(\hat\gamma_{21}\vert\psi\vert_0^2-\hat\gamma_{22}+\hat\gamma_{23}\delta_0^2\right)\right],
\end{align}
we have
\begin{align*}
    &\hat\gamma_0=\int_0^{\tilde\omega}\frac{\tanh(\beta_0\xi)}{2\xi}\text{d}\xi,\,\hat\gamma_1 = \int_0^{\tilde\omega}\beta_0\sech^2(\beta_0\xi)\text{d}\xi\approx 1,\\
    &2\hat\gamma_2 = -\hat\gamma_{21}|\psi_0|^2+\hat\gamma_{22}-\hat\gamma_{23}\delta_0^2,\\
    &\hat\gamma_{21}=\int_0^{\tilde\omega}\left(\frac{\tanh(\beta_0\xi)}{2\xi^3}-\frac{\beta_0\sech^2(\beta_0\xi)}{2\xi^2}\right)\text{d}\xi\geq0,\\
    &\hat\gamma_{22}=\int_0^{\tilde\omega}\left(\beta_0\sech^2(\beta_0\xi)-\beta_0^2\xi\tanh(\beta_0\xi)\sech^2(\beta_0\xi)\right)\text{d}\xi\approx\frac{1}2,\notag\\
    &\hat\gamma_{23}=\int_0^{\tilde\omega}\frac{\beta_0^2\tanh(\beta_0\xi)\sech^2(\beta_0\xi)}{\xi}\text{d}\xi.
\end{align*}

The above estimated values are calculated at $\beta_0 = 0.882$. Since $1 = \nu_0\int_0^{\tilde\omega}\frac{\tanh(\beta_0 \xi)}{2\xi}\text{d}\xi$, the constant term $1-\nu_0\hat\gamma_0$ in $f$ vanishes when $\varepsilon\rightarrow 0$.
Consequently, the leading order equation of $\psi_0$ can be obtained. The leading order limit of the asymptotic approximation of the equations \eqref{eq-m} in the scale $\varepsilon^2 t = \tau$ and $\varepsilon x = \hat x$ are
\begin{equation*}
    \psi_0 =0,\,\sigma(A_{0{\tau}} +\nabla\phi_0)+{\curl}{\curl} A_0 ={\curl} H_0\notag.
\end{equation*}

The $O(\varepsilon)$ term in the expansion give rise to a rapidly varying contribution term $\frac{\psi_0}{\varepsilon}$, which leads to the above equation and deviates from the standard TDGL equation. To address this inconsistency, we consider an alternative limitting regimeby rescaling the parameter as $\beta=\frac{\beta_0}{1-\varepsilon^2}$. Using this new scaling, we similarly obtain
\begin{align}
    &\beta E_\pm = \frac{\beta_0}{1-\varepsilon^2}(\sqrt{\xi^2 + \lvert\varepsilon \psi_0\rvert^2} \pm \delta)\approx\beta_0\left[ \xi\pm\varepsilon\delta_0 +\varepsilon^2\left(\xi+\frac{\vert\psi_0\vert^2}{2\xi}\right)\right].
\end{align}
and
\begin{align*}
    &\frac{\tanh(\beta E_+)+\tanh(\beta E_-)}{2(E_++E_-)}\\
    &\approx \frac14(\tanh(\beta E_+)+\tanh(\beta E_-))\left(\frac1\xi-\frac{\varepsilon^2\vert\psi_0\vert^2}{2\xi^3}\right)\\
    &\approx\frac{\tanh(\beta_0\xi)}{2\xi}+\varepsilon^2\left(\frac1{4\xi}R^*(\xi)-\frac{\vert\psi_0\vert^2}{4\xi^3}\tanh(\beta_0\xi)\right).
\end{align*}
Here,
\begin{align*}
    &R^*(\xi)=2\beta_0\sech^2(\beta_0\xi)\left(\xi+\frac{\vert\psi_0\vert^2}{2\xi}\right)-2\beta_0^2\delta_0^2\tanh(\beta_0\xi)\sech^2(\beta_0\xi).
\end{align*}
Before proposing the equation of this scale, we derive the coefficients as in \eqref{coef1}. Note that in this scale, the $O(\varepsilon)$ term vanishes. Denote
\begin{align}
    &f^* \approx 1-\nu_0(\hat \gamma_0+\varepsilon^2\hat \gamma_2)=\frac{\varepsilon^2\nu_0}2\left(\hat\gamma_{21}\vert\psi\vert_0^2-\hat\gamma_1+\hat\gamma_{23}\delta_0^2\right),\label{rem}
\end{align}
where the constants are defined in \eqref{coef1}.

Therefore, we recover the TDGL equation in this limiting regime. Consider again the scaled variables $\varepsilon^2 t = \tau$ and $\varepsilon x = \hat x$. In these scaled variables, the equation becomes
\begin{subequations}
    \begin{numcases}{}
    \psi_{0 \tau} +i\phi_0\psi_{0}+(\frac{i}{\kappa}\nabla+A_0)^2\psi_0+\frac{\nu_0}2\left(\hat\gamma_{21}\ve\psi_0\ve^2-1+\hat\gamma_{23}\delta_0^2\right)\psi_0=0,\notag\\
    \sigma(A_{0{\tau}} +\nabla\phi_0)+{\curl}{\curl} A_0  +\frac{1}{\kappa}\text{Im}(\psi_0\nabla\psi_0^*)+A_0\ve \psi_0\ve ^2 ={\curl} H_0\notag.
\end{numcases}
\end{subequations}


\end{proof}

\begin{theorem}[Convergence rate as $T \to T_c$]
Under the same conditions, consider the hybrid TDGL system \eqref{eq-m}--\eqref{iv-1} with the scaling $\beta = \beta_0/(1-\varepsilon^2)$ 
Then, as $\varepsilon\to 0$, we have
\begin{equation}
\Vert \psi(t,x) - \varepsilon \psi_0(\varepsilon^2 t,\varepsilon x)\Vert+\Vert A (t,x)- \varepsilon A_0(\varepsilon^2 t,\varepsilon x)\Vert\leq C\varepsilon^2
\end{equation}
\begin{proof}
The argument resembles the a priori estimates in \cite{Mu98}, and is thus simplified. Denote $e_\psi = \psi(t,x) - \varepsilon\psi_0(\varepsilon^2 t,\varepsilon x)$ and $e_A = A(t,x) - \varepsilon A_0(\varepsilon^2 t,\varepsilon x)$. 
By multiplying the equation \eqref{eq-GL-limit} by $\varepsilon^3$, and then subtracting the resulting equation from \eqref{eq-m}, we obtain the error equation. Passing to the weak formulation, this reads

\begin{subequations}
\begin{numcases}{}
\langle{e_\psi}_t, \eta\rangle 
+\langle D_A^2 e_\psi ,\eta\rangle - \langle \varepsilon f^*\psi_0 - f(|\psi|^2)\psi,\eta \rangle +C\langle R_\varepsilon,\eta\rangle\\
\phantom{xx} =\frac 1{\kappa}  Im\langle \eta,A\varepsilon\nabla \psi_0\rangle+ \langle \varepsilon(A+\varepsilon A_0)e_A\psi_0,\eta\rangle,\notag
\\
\sigma({e_A}_t,\xi) + (\curl\curl e_A,\xi) + \frac{1}{\kappa}(\text{Im}(\psi\nabla\psi^*-\varepsilon^2\psi_0\nabla\psi_0^*) ,\xi)) \\
\phantom{xx}= (\curl e_H,\xi) - (|\psi|^2e_A,\xi) - (\varepsilon(|\psi|^2-\varepsilon^2|\psi_0|^2)A_0,\xi).  \notag
\end{numcases}
\end{subequations}
Taking $\eta=e_\psi$, $\xi=e_A$ in the error equations, we obtain
\begin{equation}
\langle (e_\psi)_t, e_\psi\rangle + \sigma \big((e_A)_t,e_A\big)
+ \langle D_A^2 e_\psi, e_\psi\rangle + (\curl\curl e_A,e_A)
= \mathcal{N} + \mathcal{C},\notag
\end{equation}
where $\mathcal{N}$ collects the nonlinear difference terms and 
$\mathcal{C}$ collects the remaining cross terms. The nonlinear term reads
\begin{equation}
|\langle \varepsilon f(\varepsilon^2|\psi_0|^2)\psi_0 - f(|\psi|^2)\psi,e_\psi \rangle| + |\langle\varepsilon f(\varepsilon^2|\psi_0|^2)\psi_0 - \varepsilon f^*\psi_0,e_\psi \rangle|.\notag
\end{equation}
The first part is controlled by Lipschitz continuity. Using \eqref{rem}, we have $R_\varepsilon = O(\varepsilon^3)$, and the $R_\varepsilon $ yields $O(\varepsilon^6)$ after pairing with $e_\psi$.  
For the cross terms $\mathcal{C}$, using the estimates in \cite{Mu98} yields
\begin{equation}
\frac{d}{dt}\Big(\|e_\psi\|^2+\sigma\|e_A\|^2\Big)
\;\le\; C\big(\|e_\psi\|^2+\|e_A\|^2\big) + C\varepsilon^6.\notag
\end{equation}

Applying Gronwall’s inequality, and assuming 
$\|e_\psi(0)\|^2+\sigma\|e_A(0)\|^2=O(\varepsilon^2)$, we conclude
\begin{equation}
\|e_\psi\|^2+\sigma\|e_A\|^2 \;\le\; C\varepsilon^2, \qquad t\in[0,T].\notag
\end{equation}
\end{proof}
\end{theorem}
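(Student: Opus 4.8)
The plan is to run a Gronwall-type energy argument on the error between the genuine solution and its leading-order asymptotic profile, in the spirit of the a priori estimates of the cited reference. First I would set $\Psi(t,x)=\varepsilon\psi_0(\varepsilon^2 t,\varepsilon x)$ and $\mathcal{A}(t,x)=\varepsilon A_0(\varepsilon^2 t,\varepsilon x)$, and observe via the chain rule that $\partial_t\Psi=\varepsilon^3(\psi_{0\tau})(\varepsilon^2 t,\varepsilon x)$ and $\nabla\Psi=\varepsilon^2(\nabla_{\hat x}\psi_0)(\varepsilon^2 t,\varepsilon x)$, so that multiplying the limiting system \eqref{eq-GL-limit} by $\varepsilon^3$ recovers each linear term of \eqref{eq-m} evaluated at $(\Psi,\mathcal A)$. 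The only mismatch comes from the nonlinearity: by the expansion \eqref{rem} obtained in the Proposition, the difference between $f(|\Psi|^2)\Psi$ and $\varepsilon^3\tfrac{\nu_0}{2}(\hat\gamma_{21}|\psi_0|^2-\hat\gamma_1+\hat\gamma_{23}\delta_0^2)\psi_0$ is a residual $R_\varepsilon$ with $\|R_\varepsilon\|=O(\varepsilon^3)$, namely the $O(\varepsilon^3)$ tail of the Taylor series for $f$. Hence $(\Psi,\mathcal A)$ solves \eqref{eq-m} up to the consistency defect $R_\varepsilon$.

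Next I would subtract the two systems to obtain the error equations for $e_\psi=\psi-\Psi$ and $e_A=A-\mathcal A$, pass to the weak formulation, and test with $\eta=e_\psi$ and $\xi=e_A$. Taking real parts and integrating by parts in the covariant term $\la D_A^2 e_\psi,e_\psi\ra$, whose boundary contribution vanishes by \eqref{bv-1}, produces the coercive quantity $\|(\tfrac{i}{\kappa}\nabla+A)e_\psi\|^2$, while $(\curl\curl e_A,e_A)=\|\curl e_A\|^2$ is likewise nonnegative. This yields the differential identity
\begin{equation*}
\tfrac12\tfrac{d}{dt}\big(\|e_\psi\|^2+\sigma\|e_A\|^2\big)+\|(\tfrac{i}{\kappa}\nabla+A)e_\psi\|^2+\|\curl e_A\|^2=\mathcal N+\mathcal C-\la R_\varepsilon,e_\psi\ra,
\end{equation*}
where $\mathcal N$ gathers the nonlinear differences and $\mathcal C$ the cross terms coupling $e_\psi$ and $e_A$ through the magnetic field and the supercurrent.

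The analytical core is to bound the right-hand side by $C(\|e_\psi\|^2+\|e_A\|^2)+C\varepsilon^6$. For $\mathcal N$ I would split $f(|\psi|^2)\psi-f(|\Psi|^2)\Psi$ and invoke the Lipschitz continuity of $s\mapsto f(|s|^2)s$ on the ball $\{|s|\le 1\}$, which is legitimate because the maximum-bound principle (and the corresponding a priori bound for the PDE) confines both $|\psi|$ and $|\Psi|$, giving $|\mathcal N|\le C\|e_\psi\|^2$. The cross terms $\mathcal C$, of the form $\la \varepsilon(A+\varepsilon A_0)e_A\psi_0,e_\psi\ra$, $\tfrac1\kappa\,\mathrm{Im}\la e_\psi, A\varepsilon\nabla\psi_0\ra$, and the magnetic analogues $(|\psi|^2 e_A,e_A)$ and $(\varepsilon(|\psi|^2-\varepsilon^2|\psi_0|^2)A_0,e_A)$, are each handled by H\"older and Young's inequalities, absorbing gradient factors into the coercive terms and leaving contributions of size $C(\|e_\psi\|^2+\|e_A\|^2)$; the consistency defect contributes $|\la R_\varepsilon,e_\psi\ra|\le\tfrac12\|e_\psi\|^2+\tfrac12\|R_\varepsilon\|^2\le\tfrac12\|e_\psi\|^2+C\varepsilon^6$. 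Discarding the nonnegative coercive terms and applying Gronwall's inequality over $[0,T]$, with well-prepared initial data $\|e_\psi(0)\|+\|e_A(0)\|=O(\varepsilon^2)$, yields $\|e_\psi\|^2+\sigma\|e_A\|^2\le C\varepsilon^4$ and hence the stated $O(\varepsilon^2)$ bound. I expect the main obstacle to be the rigorous control of the gradient cross term $\tfrac1\kappa\,\mathrm{Im}\la e_\psi,A\varepsilon\nabla\psi_0\ra$ together with the covariant difference $D_A^2\psi-D_{\mathcal A}^2\Psi$: these mix the two unknowns through $A$ and require the a priori $H^1$ regularity of $\psi_0$ and a uniform-in-$\varepsilon$ bound on $\|A\|_{L^\infty}$ to keep the constants independent of $\varepsilon$, which is precisely the point where the estimates of the cited reference are invoked.
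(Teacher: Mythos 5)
Your proposal follows essentially the same route as the paper's own proof: the identical error decomposition $e_\psi=\psi-\varepsilon\psi_0(\varepsilon^2t,\varepsilon x)$, $e_A=A-\varepsilon A_0(\varepsilon^2t,\varepsilon x)$, the $\varepsilon^3$-scaling and subtraction of the limiting system, weak-form testing with $(e_\psi,e_A)$, Lipschitz control of the nonlinear difference plus the $O(\varepsilon^3)$ consistency residual from \eqref{rem} giving an $O(\varepsilon^6)$ source, cross terms handled as in the cited reference, and Gronwall. If anything, your bookkeeping at the final step is the more consistent one: you take $\|e_\psi(0)\|+\|e_A(0)\|=O(\varepsilon^2)$ and deduce a squared bound $C\varepsilon^4$, matching the claimed $O(\varepsilon^2)$ rate, whereas the paper assumes the \emph{squared} initial norms are $O(\varepsilon^2)$ and concludes $\|e_\psi\|^2+\sigma\|e_A\|^2\le C\varepsilon^2$, which only yields $O(\varepsilon)$ for the unsquared error stated in the theorem.
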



\section{Energy stable semi-discrete scheme}
\label{sec-scheme}
In this section, we investigate a semi-discrete scheme derived from a stabilized numerical scheme for the dimensionless equation for numerical simulation. When $\phi = 0$, the discrete maximum modulus bound and energy stability can be established for this implicit-explicit (IMEX) scheme with a stabilizer, similar to the property in \cite{Ma23}. In this work, a global Lipschitz condition $\vert f'(|\psi|)\vert \le L $ is assumed, which can be enforced via a truncation technique. However, this condition is not strictly necessary and may be relaxed, provided that a uniform bound on \( |\psi| \) is maintained. Based on this observation, the following linear first-order scheme is proposed to enable fast and stable numerical simulation.
 The time interval is discretized into $N$ intervals and $t^n = n\tau$ for $n= 1,2,3\cdots$. Furthermore, we introduce the short notation $\delta_t\psi^{n+1} =\psi^{n+1}-\psi^n$, and impose the homogenous Neumann boundary condition throughout this section.


\begin{subequations}
\begin{numcases}{}
\frac{\delta_t\psi^{n}}\tau +(\frac{i}{\kappa}\nabla+ A^{n-1})^2\psi^{n} +f(|\psi^{n-1}|^2)\psi^{n}+S\delta_t\psi^{n}=0,\label{sh2a}\\
\sigma\frac{\delta_tA^{n}}\tau +\curl\curl A^{n} +{\frac1\kappa}\text{Im}\{\psi^{n}\nabla{{\psi^{n*}}}\} +A^{n}\ve \psi^{n}\ve ^2={\curl H^{n}}\label{sh2b}.
\end{numcases}
\end{subequations}

{Note that the zero of \eqref{pot} is generally not 1, but becomes approximately 1 when $\beta$ is large (or the temperature is low). The following analysis focuses on the case $x_0=1$, though it can be extended to more general cases $x_0\not=1$.}
\begin{theorem}[Maximum modulus bound]
Assuming that $\ve f'\ve\le L $ for some positive constant $L$. If $S\ge2L$ and $\ve \psi_0\ve \leq1$, we have {$\ve \psi^{n}\ve \leq1$ for all $n$}.
\end{theorem}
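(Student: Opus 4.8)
\section*{Proof proposal}

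The plan is to argue by induction on the time level $n$, the base case being the hypothesis $|\psi^0|=|\psi_0|\le 1$. For the inductive step I assume the pointwise bound $|\psi^{n-1}(x)|\le 1$ on $\overline\Omega$ and must propagate it to $\psi^n$. First I would multiply \eqref{sh2a} by $\tau$ and recast it in the linear-in-$\psi^n$ form
$$
\Bigl(1+S\tau+\tau f(|\psi^{n-1}|^2)\Bigr)\psi^n+\tau\Bigl(\tfrac{i}{\kappa}\nabla+A^{n-1}\Bigr)^2\psi^n=(1+S\tau)\psi^{n-1},
$$
so that the stabilizer and the explicitly lagged nonlinearity merge into a single zeroth-order coefficient. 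This coefficient is strictly positive: since $f(1)=0$ and $|f'|\le L$ give $f(|\psi^{n-1}|^2)\ge -L$, the assumption $S\ge 2L$ yields $1+S\tau+\tau f\ge 1+\tau(S-L)>0$.

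Second -- and this is the crux -- I would establish a spatial maximum principle for the magnetic Laplacian. Let $x^*\in\overline\Omega$ be a point where $|\psi^n|$ attains its maximum $M$. The key claim is that $\mathrm{Re}\bigl(\overline{\psi^n}\,(\tfrac{i}{\kappa}\nabla+A^{n-1})^2\psi^n\bigr)(x^*)\ge 0$. I would prove this by exploiting gauge covariance: the complex quantity $\overline{\psi}\,(\tfrac i\kappa\nabla+A)^2\psi$ is invariant under $\psi\mapsto e^{i\kappa\chi}\psi$, $A\mapsto A+\nabla\chi$, so choosing a linear $\chi$ with $\nabla\chi(x^*)=-A^{n-1}(x^*)$ reduces the expression at $x^*$ to $-\kappa^{-2}\,\mathrm{Re}(\overline{\psi^n}\Delta\psi^n)=-\kappa^{-2}\bigl(\tfrac12\Delta|\psi^n|^2-|\nabla\psi^n|^2\bigr)$; since $x^*$ is a maximum of $|\psi^n|^2$ one has $\Delta|\psi^n|^2\le 0$, which gives the claimed sign. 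If $x^*\in\partial\Omega$, the same conclusion follows from the homogeneous Neumann condition by a standard Hopf-type argument. I expect this step to be the main obstacle, because the vector potential couples into the second-order operator and the sign is invisible without the gauge structure.

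Third, I would test the recast equation pointwise: multiply by $\overline{\psi^n}$, take the real part, and evaluate at $x^*$. Dropping the nonnegative diffusion term, writing $m:=|\psi^{n-1}(x^*)|\le 1$, and using $\mathrm{Re}(\overline{\psi^n}\psi^{n-1})(x^*)\le Mm$, I am left with the scalar inequality
$$
\bigl(1+S\tau+\tau f(m^2)\bigr)M\le(1+S\tau)\,m.
$$

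Finally, I would close the estimate using the normalization $f(1)=0$ (the case $x_0=1$ singled out in the remark preceding the theorem) together with the Lipschitz bound $|f(m^2)|=|f(m^2)-f(1)|\le L(1-m^2)\le 2L(1-m)$. Dividing the scalar inequality by the positive coefficient, the desired bound $M\le 1$ is equivalent to $f(m^2)\ge(\tfrac1\tau+S)(m-1)$, i.e. to $L(1+m)\le\tfrac1\tau+S$ when $m<1$; since $1+m\le 2$ and $S\ge 2L$ this holds, while $m=1$ forces $f(m^2)=0$ and $M\le 1$ directly. This exhibits $S\ge 2L$ as precisely the threshold that renders the lagged nonlinearity harmless, completing the induction.
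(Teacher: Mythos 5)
Your proof is correct, and it rests on the same two pillars as the paper's own argument: induction in $n$ combined with a pointwise maximum-principle argument at the maximizer of $|\psi^n|$, and the balance $S\ge 2L$ that lets the stabilizer absorb the lagged nonlinearity through $f(1)=0$ and the Lipschitz bound. The execution differs in two places, both defensible. For the key sign lemma $\mathrm{Re}\{\overline{\psi^n}(\tfrac{i}{\kappa}\nabla+A^{n-1})^2\psi^n\}(x^*)\ge 0$, the paper writes $\psi^n=|\psi^n|e^{i\theta}$ and exhibits the perfect square $\psi_m^2\,\vert\tfrac{1}{\kappa}\nabla\theta-A^{n-1}\vert^2$ together with $-\tfrac{1}{2\kappa^2}\Delta|\psi^n|^2\ge 0$, while you reach the same conclusion by a local gauge change with $\nabla\chi(x^*)=-A^{n-1}(x^*)$; your route avoids introducing the phase $\theta$ altogether (it is ill-defined at zeros of $\psi$, although this never bites in the paper's contradiction setup, where $\psi_m>1$). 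For the algebra, the paper assumes $\psi_m>1$ and derives a contradiction after a mean-value splitting of $f(|\psi^{n-1}|^2)$ against the stabilizer, whereas you recast the scheme as a linear elliptic equation with strictly positive zeroth-order coefficient and close with the direct scalar inequality $(1+S\tau+\tau f(m^2))M\le(1+S\tau)m$, which makes the threshold role of $S\ge 2L$ (via $L(1+m)\le 2L\le S$) completely transparent. Three minor points to fix in a final write-up: after the gauge change the reduced expression involves $e^{i\kappa\chi}\psi^n$, not $\psi^n$ (harmless, since the modulus and the sign of each term are unchanged); the term $\tfrac{i}{\kappa}(\nabla\cdot A^{n-1})|\psi^n|^2$ drops only after taking the real part, which deserves an explicit sentence; and ``$M\le 1$ is equivalent to $f(m^2)\ge(\tfrac{1}{\tau}+S)(m-1)$'' should read ``is implied by''. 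You also treat the boundary-maximum case explicitly via the Neumann condition, a point the paper passes over in silence.
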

\begin{proof} Multiplying \eqref{sh2a} by ${\psi^{n*}}$ and taking the real part of the result, we have

\begin{align}
&\frac{\ve\psi^n\ve^2}\tau+\text{Re}\{{\psi^{n*}}(\frac{i}\kappa\nabla+A^{n-1})^2\psi^{n}\} + f(|\psi^{n-1}|^2)\ve \psi^{n}\ve ^2+\frac{S\delta_t\ve\psi^n\ve^2}2\leq\frac{\ve\psi^{n-1}\ve^2}\tau\nonumber.
\end{align}
Notice that $f(1)=0$. Since $f(\ve\psi^{n-1}\ve^2)-f(1) =f'(\ve\xi\ve^2)(\ve\psi^{n-1}\ve^2-1) $ {for some $\xi$ satisfying that $|\xi|$ is between $|\psi^{n-1}|$ and $1$}, we have
\begin{align}
&\text{LHS}\nonumber\\
&=\frac{\ve\psi^n\ve^2}\tau+\text{Re}\{{\psi^{n*}}(\frac{i}{\kappa}\nabla+A^{n-1})^2\psi^{n}\} +f(|\psi^{n-1}|^2)\ve \psi^{n}\ve ^2+\frac{S(\ve\psi^n\ve^2-\ve\psi^{n-1}\ve^2)}2\nonumber\\
&=\frac{\ve\psi^n\ve^2}\tau+\text{Re}\{{\psi^{n*}}(\frac{i}{\kappa}\nabla+A^{n-1})^2\psi^{n}\} \nonumber\\
&\phantom{xx}+\left( f(\ve\xi\ve^2)\ve\psi^{n}\ve ^2+\frac S2\ve\psi^n\ve^2\right)(1-\ve\psi^{n-1}\ve^2)+\frac{S}2\left(\ve\psi^n\ve^2-1\right){\ve\psi^{n-1}\ve^2}\label{lsh-1}.
\end{align}
The proof is based on induction. The bound of the initial value $\ve \psi^0\ve \leq1$ is guaranteed by appropriately discretizing the initial condition. In addition, we assume that {$\ve \psi^j\ve \leq1$ for $j=1\cdots n-1$, and we show that $\ve \psi^n\ve \leq1$ holds} by contradiction. Assume that $\ve  \psi^{n}\ve $ approaches its maximum at $x_m$ and $\ve  \psi^{n}(x_m)\ve =\psi_m>1$. Thus, we have
\begin{align*}
\nabla\ve \psi^{n}(x_m)\ve ^2=0, -\nabla^2(\ve \psi^{n}(x_m)\ve ^2)=-\nabla\cdot\text{Re}\{\psi^{n*}(x_m)\nabla\psi^{n}(x_m)\}\geq0.\notag
\end{align*}
{Here and in the following we use the notation $\nabla h(x_m)$ to denote $(\nabla h(x))|_{x=x_m}$.}

Denote $\psi^{n}= \ve  \psi\ve  e^{i\theta}$, thus $\nabla\ve \psi^{n}(x_m)\ve = 0.5\psi_m^{-1}\nabla\ve \psi^{n}(x_m)\ve ^2=0$. Since
\begin{align}
&\text{Re}\{(\frac{i}{\kappa}\nabla+A^{n-1})^2\psi^{n}{\psi^{n*}}\}\nonumber\\
&=\text{Re}\{-\frac1{\kappa^2}\nabla^2\psi^{n}{\psi^{n*}}+\frac{i}{\kappa}\text{div}{A^{n-1}}\ve \psi^{n}\ve ^2+\frac{2i}{\kappa}{A^{n-1}}\nabla\psi^{n}{\psi^{n*}}+|{A^{n-1}}|^2\ve \psi^{n}\ve ^2\}\nonumber\\
&=\text{Re}\{\frac1{\kappa^2}\nabla\psi^{n}\nabla{\psi^{n*}}+\frac{2i}{\kappa}{A^{n-1}}\nabla\psi^{n}{\psi^{n*}}+|{A^{n-1}}|^2\ve \psi^{n}\ve ^2\}\\
&\phantom{xx}-\frac{1}{\kappa^2}\nabla\text{Re}\{({\psi^{n*}}\nabla\psi^{n})\},\notag
\end{align}
and
\begin{align}
&\text{Re}\{\nabla(\ve  \psi\ve  e^{i\theta})\nabla(\ve  \psi\ve  e^{-i\theta})\} =\ve \nabla\ve  \psi\ve \ve ^2+\ve \psi\ve ^2\ve \nabla\theta\ve ^2,\notag\\
&\text{Re}\{2iA\nabla(\ve  \psi\ve  e^{i\theta})(\ve  \psi\ve  e^{-i\theta})\}= - 2\ve \psi\ve ^2A\cdot\nabla\theta,\notag
\end{align}
we have
\begin{align}
&\text{Re}\{{\psi^{n*}(x_m)}(\frac{i}{\kappa}\nabla+A^{n-1})^2\psi^{n}(x_m)\}\\
&=\psi_m^2(\frac{1}{\kappa^2}\ve \nabla\theta\ve ^2 - \frac{2}{\kappa}A^{n-1}\cdot\nabla\theta+\ve  A^{n-1}\ve ^2)\geq 0.\notag
\end{align}
According to the assumption $\ve\psi^{n-1}\ve^2\le1$, we have $\text{LHS}\le\tau^{-1}$. However, from the last inequality {in \eqref{lsh-1}}, we have $\text{LHS}{(x_m)}\geq \tau^{-1}\ve \psi^{n}(x_m)\ve ^2>\tau^{-1}$, which raises a contradiction. Thus, {$\ve \psi^{n}\ve \leq1$ holds.}
\end{proof}

Similarly, the following discrete energy stability can be established.
\begin{theorem}[Energy inequality]\label{thm-ene}
Assuming that $|\psi^0|=|\psi_0|\le 1$ and $\ve f'\ve\le L $ for $\ve\psi\ve\leq1$. If $\phi = 0$ and $S\geq 2L$, we have
\begin{equation}\label{dis-eq}
{\delta_tE^n\leq (H^n-\curl A^{n},\delta_t H^{n})}.\notag
\end{equation}
where,\begin{equation}
E^{n} = \frac12\left(\vert (\frac i\kappa\nabla+A^n)\psi^{n}\vert ^2+ F(|\psi^{n}|^2)+\vert\curl A^n-H^n\vert^2,1\right).\notag
\end{equation}
\end{theorem}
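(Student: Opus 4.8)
The plan is to establish the discrete energy law by testing each equation of the scheme against the appropriate discrete time increment, adding the two identities, and then using the stabilizer $S$ together with the modulus bound $\ve\psi^{n}\ve\le 1$ furnished by the preceding Maximum modulus bound theorem to absorb the error introduced by treating $f$ explicitly. Throughout I write $D_{A}=\tfrac{i}{\kappa}\nabla+A$ for the gauge-covariant gradient, let $(\cdot,\cdot)$ denote the $L^{2}(\Omega)$ pairing (with real part taken for complex quantities), and use $u^{n}=\curl A^{n}-H^{n}$. Since $\phi=0$, the scheme \eqref{sh2a}--\eqref{sh2b} is used verbatim. First I would test \eqref{sh2a} with $\overline{\delta_t\psi^{n}}$ and take the real part, and simultaneously test \eqref{sh2b} with $\delta_t A^{n}$.

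For \eqref{sh2a} the temporal and stabilizer terms give $(\tfrac1\tau+S)\Ve\delta_t\psi^{n}\Ve^{2}\ge0$. Integrating the second-order operator by parts (the boundary term vanishing under the imposed boundary conditions) rewrites $\mathrm{Re}(D_{A^{n-1}}^{2}\psi^{n},\delta_t\psi^{n})$ as $\mathrm{Re}(D_{A^{n-1}}\psi^{n},D_{A^{n-1}}\delta_t\psi^{n})$; applying the polarization identity $\mathrm{Re}\la a-b,a\ra=\tfrac12(\Ve a\Ve^{2}-\Ve b\Ve^{2}+\Ve a-b\Ve^{2})$ with $a=D_{A^{n-1}}\psi^{n}$, $b=D_{A^{n-1}}\psi^{n-1}$ yields $\tfrac12\delta_t\Ve D_{A^{n-1}}\psi^{n}\Ve^{2}$ plus a nonnegative remainder. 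For the nonlinearity I would combine $\mathrm{Re}(\psi^{n}\overline{\delta_t\psi^{n}})=\tfrac12(|\psi^{n}|^{2}-|\psi^{n-1}|^{2}+|\delta_t\psi^{n}|^{2})$ with the Taylor identity $F(b)-F(a)=f(a)(b-a)+\tfrac12 f'(\zeta)(b-a)^{2}$ to extract $\tfrac12\delta_t(F(|\psi^{n}|^{2}),1)$, leaving the remainders $-\tfrac14(f'(\zeta)(|\psi^{n}|^{2}-|\psi^{n-1}|^{2})^{2},1)$ and $\tfrac12(f(|\psi^{n-1}|^{2})|\delta_t\psi^{n}|^{2},1)$.

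The decisive step, and the main obstacle, is that $E^{n}$ uses $D_{A^{n}}\psi^{n}$ whereas the semi-implicit scheme produces $D_{A^{n-1}}\psi^{n}$. I would write $\tfrac12\delta_t\Ve D_{A^{n-1}}\psi^{n}\Ve^{2}=\tfrac12\delta_t\Ve D_{A^{n}}\psi^{n}\Ve^{2}-\tfrac12\big(\Ve D_{A^{n}}\psi^{n}\Ve^{2}-\Ve D_{A^{n-1}}\psi^{n}\Ve^{2}\big)$ and expand the correction via $\Ve D_{A}\psi\Ve^{2}=\tfrac1{\kappa^{2}}\Ve\nabla\psi\Ve^{2}+(|A|^{2}|\psi|^{2},1)+\tfrac2\kappa(A,\mathrm{Im}(\psi\nabla\psi^{*}))$. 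The term linear in $A^{n}-A^{n-1}=\delta_t A^{n}$ equals $-\tfrac1\kappa(\delta_t A^{n},\mathrm{Im}(\psi^{n}\nabla\psi^{n*}))$, which cancels identically against the $\tfrac1\kappa(\mathrm{Im}(\psi^{n}\nabla\psi^{n*}),\delta_t A^{n})$ term produced by testing \eqref{sh2b}; combining the quadratic $|A|^{2}|\psi^{n}|^{2}$ correction with $(|\psi^{n}|^{2}A^{n},\delta_t A^{n})$ leaves the nonnegative square $\tfrac12(|\psi^{n}|^{2}|\delta_t A^{n}|^{2},1)$. Verifying these two exact cancellations cleanly is where the bookkeeping must be done with care. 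On the magnetic side, integrating $\curl\curl A^{n}$ and $\curl H^{n}$ by parts recombines into $(u^{n},\delta_t\curl A^{n})$, and polarizing gives $\tfrac12\delta_t\Ve u^{n}\Ve^{2}$, a nonnegative remainder, and exactly the source term $(u^{n},\delta_t H^{n})$ whose negative is the claimed right-hand side $(H^{n}-\curl A^{n},\delta_t H^{n})$.

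Adding the two tested identities and discarding every nonnegative dissipation term ($\tfrac1\tau\Ve\delta_t\psi^{n}\Ve^{2}$, $\tfrac12\Ve D_{A^{n-1}}\delta_t\psi^{n}\Ve^{2}$, $\tfrac\sigma\tau\Ve\delta_t A^{n}\Ve^{2}$, $\tfrac12\Ve u^{n}-u^{n-1}\Ve^{2}$, $\tfrac12(|\psi^{n}|^{2}|\delta_t A^{n}|^{2},1)$) leaves
\[
\delta_t E^{n}\le (H^{n}-\curl A^{n},\delta_t H^{n})+\tfrac14\big(f'(\zeta)(|\psi^{n}|^{2}-|\psi^{n-1}|^{2})^{2},1\big)-\tfrac12\big(f(|\psi^{n-1}|^{2})|\delta_t\psi^{n}|^{2},1\big)-S\Ve\delta_t\psi^{n}\Ve^{2}.
\]
To close, I would invoke $\ve\psi^{j}\ve\le1$ from the Maximum modulus bound theorem, which gives $|f|\le L$ (since $f(1)=0$ and $\ve f'\ve\le L$) and the elementary estimate $(|\psi^{n}|^{2}-|\psi^{n-1}|^{2})^{2}\le4|\delta_t\psi^{n}|^{2}$. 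These bound the first remainder by $L\Ve\delta_t\psi^{n}\Ve^{2}$ and the second by $\tfrac{L}{2}\Ve\delta_t\psi^{n}\Ve^{2}$, so the three error terms total at most $(\tfrac{3L}{2}-S)\Ve\delta_t\psi^{n}\Ve^{2}\le0$ whenever $S\ge2L$, yielding $\delta_t E^{n}\le(H^{n}-\curl A^{n},\delta_t H^{n})$ as claimed.
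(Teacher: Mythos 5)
Your proposal is correct and follows essentially the same route as the paper's proof: testing \eqref{sh2a} with $\delta_t\psi^n$ and \eqref{sh2b} with $\delta_t A^n$, converting the $D_{A^{n-1}}$-energy increment into the $D_{A^n}$-energy increment so that the resulting $\tfrac1\kappa\,\mathrm{Im}(\psi^n\nabla\psi^{n*})\cdot\delta_t A^n$ cross term cancels against the magnetic equation and the $|A|^2$ correction combines into the nonnegative square $\tfrac12(|\psi^n|^2|\delta_t A^n|^2,1)$, handling the explicit nonlinearity by Taylor expansion of $F$, and absorbing the remainders with the stabilizer via the maximum modulus bound (your threshold $S\ge \tfrac{3L}{2}$ matches the paper's condition $2S>3L$, both implied by $S\ge 2L$). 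The only difference is presentational: you derive the key identity for the covariant-gradient term by explicit expansion of $\Vert D_A\psi\Vert^2$, whereas the paper quotes it following \cite{Mu98}.
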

\begin{proof}
Taking the complex inner product $\la\cdot,\cdot\ra$ of \eqref{sh2a} with  {$2\delta_t\psi^n$} and the standard $L_2$ inner product $(\cdot,\cdot)$ of  \eqref{sh2b} with $\delta_t A^{n}$ correspondingly, and taking the real part of the results, {the resulting equalities can be established. The corresponding terms of this resulting equation can be dealt with as follows.}

The first and the second terms resulting from the equation \eqref{sh2a} are $2\tau^{-1}\text{Re}\la \delta_t\psi^n,\delta_t\psi^n \ra=2\tau^{-1}\la\ve\delta_t \psi\ve^2,1\ra$ and
\begin{align}
&2\text{Re}\la \delta_t\psi^n, f(|\psi^{n-1}|^2){\psi}^{n}+S\delta_t\psi^n)\ra\nonumber\\
=&\la  f(|\psi^{n-1}|^2)(\delta_t(\ve{\psi}^{n}\ve^2)+\ve\delta_t\psi^n\ve^2)+2S\ve\delta_t\psi^n\ve^2,1\ra\nonumber\\
=&\la \delta_tF(|\psi^n|^2){-0.5f'(|\eta^{n}|^2)(\delta_t\ve{\psi}^{n}\ve)^2}(\ve\psi^n\ve+\ve\psi^{n-1}\ve)^2\\
&+(2S+f(|\psi^{n-1}|^2))\ve\delta_t\psi^n\ve^2,1\ra\nonumber\\
\ge&\la \delta_tF(|\psi^n|^2)+(2S+f(|\psi^{n-1}|^2)-2L)\ve\delta_t\psi^n\ve^2,1\ra\nonumber
\end{align}
for some $\eta^n$. If $S>3L$, using the maximum bound, we have $2S+f(|\psi^{n-1}|^2)-2L>0$.

A similar approach to that in \cite{Mu98} is adopted to handle the third term
\begin{align*}
&2\text{Re}\la(\frac {i}{\kappa}\nabla +A^{n-1})^2\psi^{n},\delta_t {\psi}^n\ra\notag\\
&=\left(\ve  (\frac i\kappa\nabla+A^{n})\psi^{n}\ve ^2 - \ve  (\frac i\kappa\nabla+A^{n-1})\psi^{n-1}\ve ^2+\ve  (\frac i\kappa\nabla+A^{n-1})\delta_t\psi^{n}\ve ^2,1\right)\\
&\phantom{xx}-{(\frac2\kappa\text{Im}\{\psi^{n}\nabla{\psi^{n*}}\}\delta_t A^n +\delta_t \vert A^{n}\vert^2\ve \psi^{n}\ve ^2,1)}\nonumber.
\end{align*}

Furthermore, taking the inner product between \eqref{sh2b} with $\delta_t A^{n}$, we have
\begin{align}
&\sigma(\ve \delta_tA\ve^2,1)+(\curl A^n-H^n,\curl\delta_t A^n) +{(\frac1{\kappa}\text{Im}\{\psi^{n}\nabla{\psi^{n*}}\} +A^{n}\ve \psi^{n}\ve ^2,\delta_t A^n)}\notag\\
&={(\curl A^{n}-H^n,\delta_t H^{n})+(H^n-\curl A^{n},\delta_t H^{n})}\label{en-A}.
\end{align}

By summing up all the equations preceeding \eqref{en-A} and 2 $\times$ \eqref{en-A}, we obtain
\begin{equation}
    2\delta_tE^n+(2S-3L)\la \ve\delta_t\psi^n\ve^2,1\ra\le{2(H^n-\curl A^{n},\delta_t H^{n})}.\notag
\end{equation}
Thus, the desired discrete energy inequality \eqref{dis-eq} follows directly from this inequality under the condition $2S>3L$.
\end{proof}

\begin{remark} 
In this section, the stability results are established to support the feasibility of numerical simulations. The IMEX scheme maintains stability while avoiding the solvability issues of fully nonlinear methods. Topics on theory analysis such as the rigorous asymptotic limit $T\to T_c$, other effective schemes, and numerical analysis remain for future investigations.
\end{remark}

\section{Numerical results}
\label{sec-Num}
In this section, the parameters are chosen artificially to test the numerical performance and simulate the vortex dynamics. The constant $\sigma = 1$. The Debye frequency is set to be approximately $\tilde\omega = 29.3$.
The dimensionless temperature constant is taken as $\beta = 8.82$, corresponding to $T=0.1T_c$. For simplicity, the zero electric gauge $\phi=0$ and the homogenous Neumann boundary condition are imposed in this section. $\nu_0$ is approximated using $\nu_0 = 2(\text{arcsinh}(\tilde\omega))^{-1}$, which is calculated at $T = 0$ in \eqref{gapeq}. The initial values are chosen to be a superconducting states $\psi = 0.8+i0.6$ and $A = (1e-6,1e-6)$. For the spatial discretization, \(P_1\) and \(N_1\text{curl}\) finite element spaces are employed in FEniCS~\cite{Fenics12}. In addition, the linearity of the scheme is utilized; the nonlinear term is obtained from a pre-calculated table, which increases the efficiency of the numerical simulation.

\setlength{\tabcolsep}{3pt}  %
\begin{table}[!htp]
    \centering
    \begin{tabular}{ccccccc}
        \hline
        $\tau\,(\times10^{-3})$ & $\|\psi - \psi_{\rm ref}\|_{L^2}$ & rate & $\|\psi - \psi_{\rm ref}\|_{H^1}$ & rate & $\|A - A_{\rm ref}\|_{H(\curl)}$ & rate \\
        \hline
        32.0 & 5.691e-04 &   & 2.875e-03 &   & 3.831e-03 &   \\
        16.0 & 3.553e-04 & 0.76  & 1.775e-03 & 0.78  & 1.543e-03 & 1.31  \\
         8.0 & 2.102e-04 & 0.80  & 1.042e-03 & 0.81  & 6.812e-04 & 1.13  \\
         4.0 & 1.204e-04 & 0.89  & 5.961e-04 & 0.89  & 3.124e-04 & 1.30  \\
         2.0 & 6.478e-05 & 1.22  & 3.211e-04 & 1.22  & 1.269e-04 & 2.03  \\
        \hline
    \end{tabular}

    \vspace{1em}

    \begin{tabular}{ccccccc}
        \hline
        $\tau\,(\times10^{-3})$ & $\|\psi - \psi_{\rm ref}\|_{L^2}$ & rate & $\|\psi - \psi_{\rm ref}\|_{H^1}$ & rate & $\|A - A_{\rm ref}\|_{H(\curl)}$ & rate \\
        \hline
        32.0 & 5.174e-04 &   & 2.596e-03 &   & 2.149e-03 &   \\
        16.0 & 3.059e-04 & 0.76  & 1.524e-03 & 0.77  & 9.490e-04 & 1.13  \\
         8.0 & 1.753e-04 & 0.80  & 8.716e-04 & 0.81  & 4.351e-04 & 1.30  \\
         4.0 & 9.432e-05 & 0.89  & 4.696e-04 & 0.89  & 1.766e-04 & 2.04  \\
         2.0 & 4.038e-05 & 1.22  & 2.016e-04 & 1.22  & 4.306e-05 & 2.04  \\
        \hline
    \end{tabular}
    \caption{Temporal convergence rates for $\Omega_1 = (-\pi,\pi)^2$ and $\Omega_2 = (-2\pi,2\pi)^2$ and $H = 0.5+\exp(-t)$.}
    \label{table.1}
\end{table}

In the first Table~\ref{table.1}, we test the temporal convergence rate of the scheme. Specifically, two sets of data are presented for the convergence rate, where the reference solutions at $\Omega_k = (-k\pi,k\pi)^2$ for $k= 1,2$ are calculated using the time step sizes, $ \Delta t = 0.0005$. The error values are calculated at $t=0.064$, $\kappa=2$, and $H = 0.5+\exp(-t)$. The error convergence rate is presented to verify the scheme's accuracy. By comparing numerical solutions obtained with different time step sizes, the rate of convergence is calculated to validate the expected behavior of the method. Exact error values and their corresponding convergence rates are summarized in Table~\ref{table.1}, which presents the convergence behavior of two different error terms, denoted by $L_2$ and $H_1$ norms, respectively, supporting the temporal accuracy of the scheme.

\begin{figure}[htbp]
   \centering
   \begin{minipage}{0.85\textwidth}
       \centering
       \textbf{H = 0.15} \\[2pt]
       \begin{minipage}{0.19\textwidth}
           \centering
           \includegraphics[width=\linewidth]{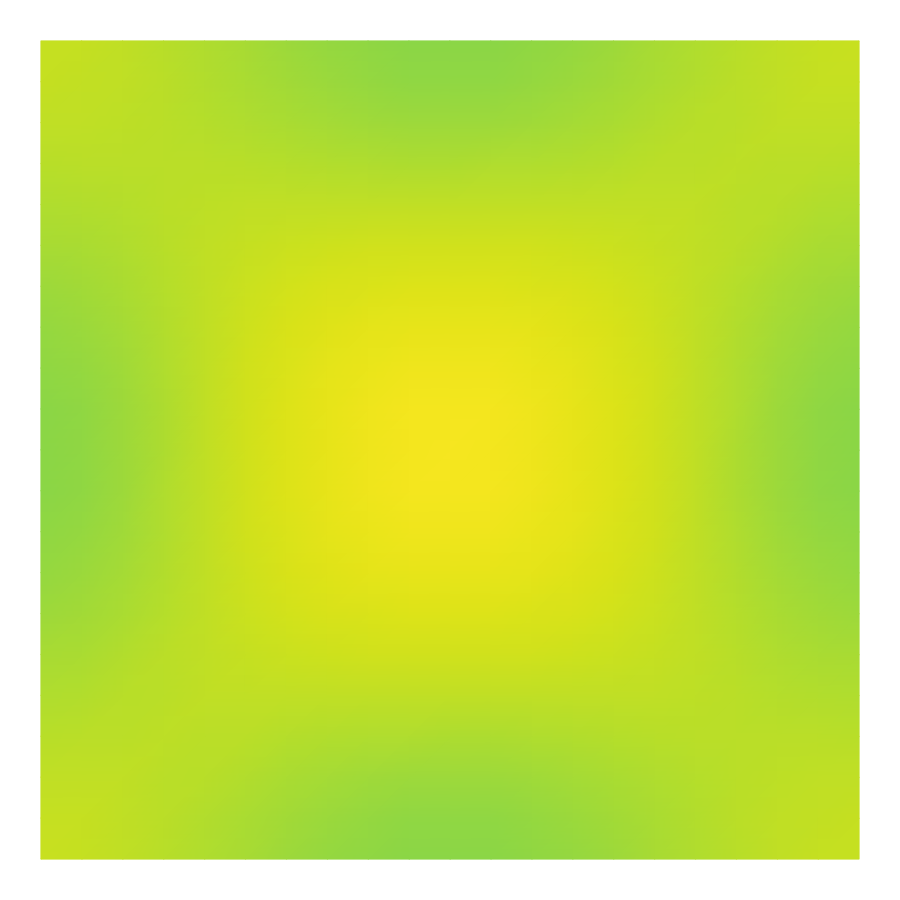}
       \end{minipage}
       \begin{minipage}{0.19\textwidth}
           \centering
           \includegraphics[width=\linewidth]{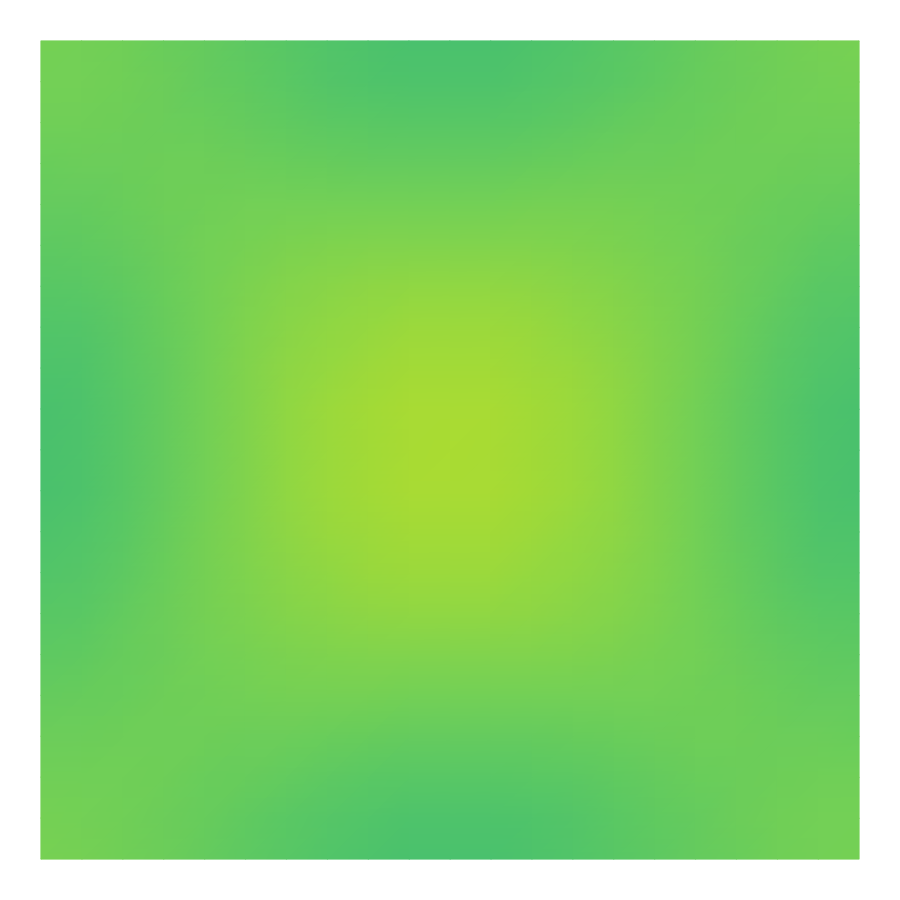}
       \end{minipage}
       \begin{minipage}{0.19\textwidth}
           \centering
           \includegraphics[width=\linewidth]{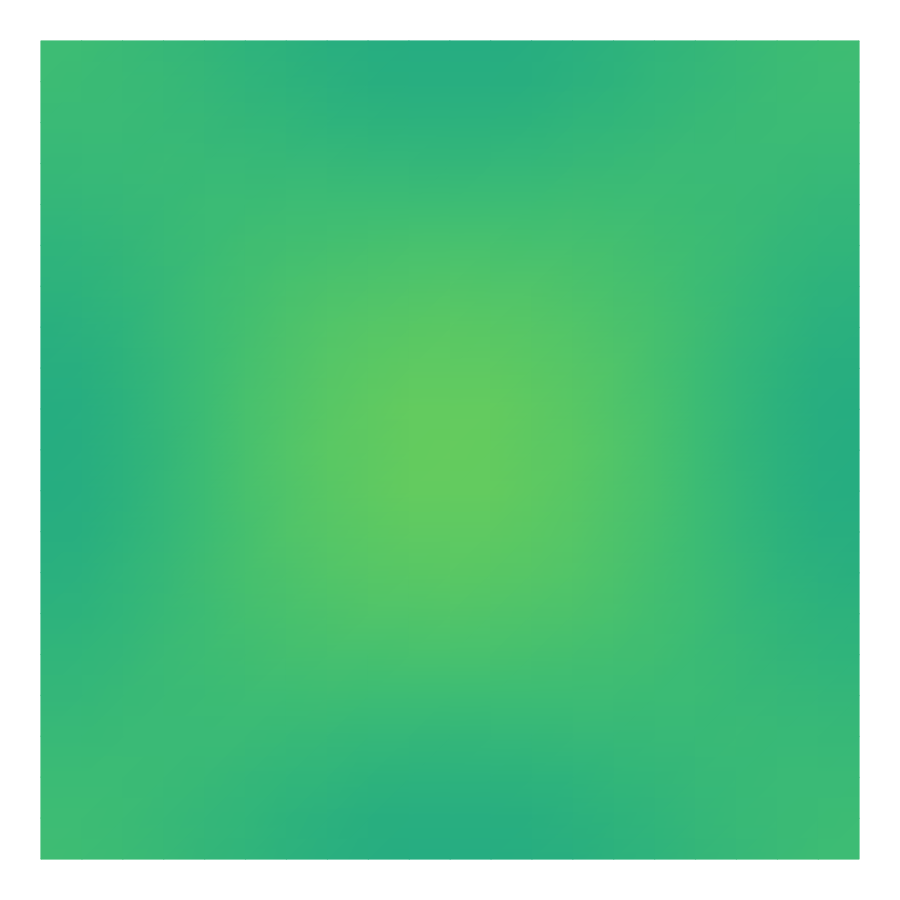}
       \end{minipage}
       \begin{minipage}{0.19\textwidth}
           \centering
           \includegraphics[width=\linewidth]{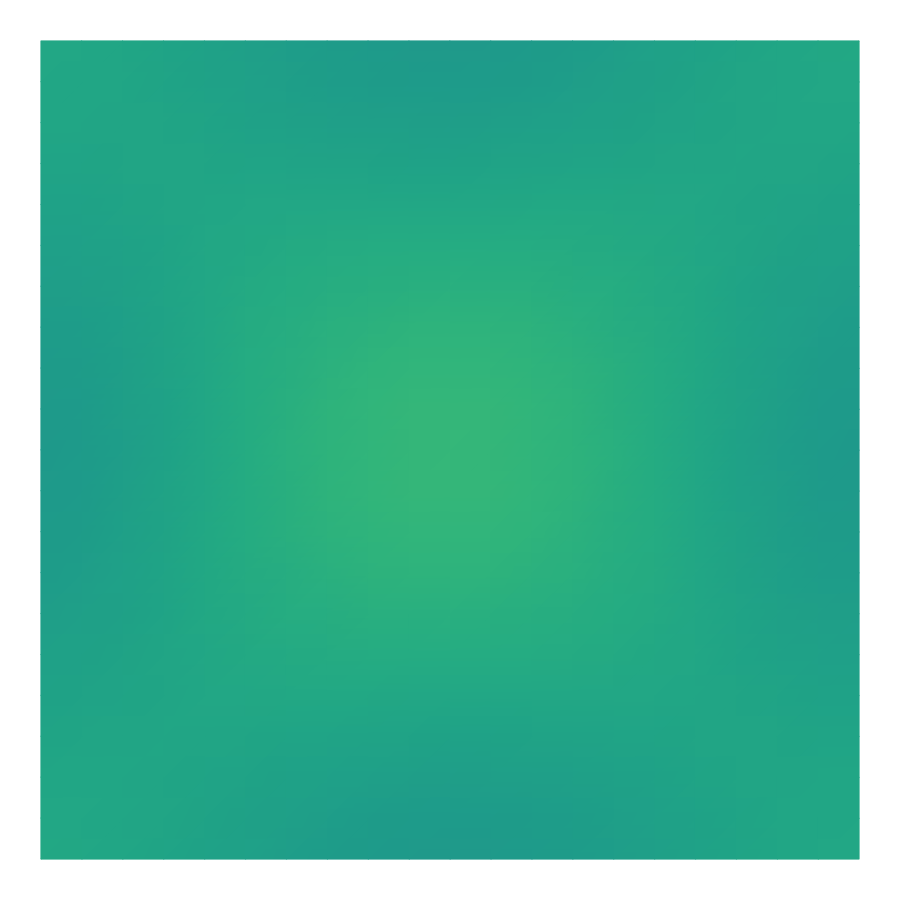}
       \end{minipage}
       \begin{minipage}{0.19\textwidth}
           \centering
           \includegraphics[width=\linewidth]{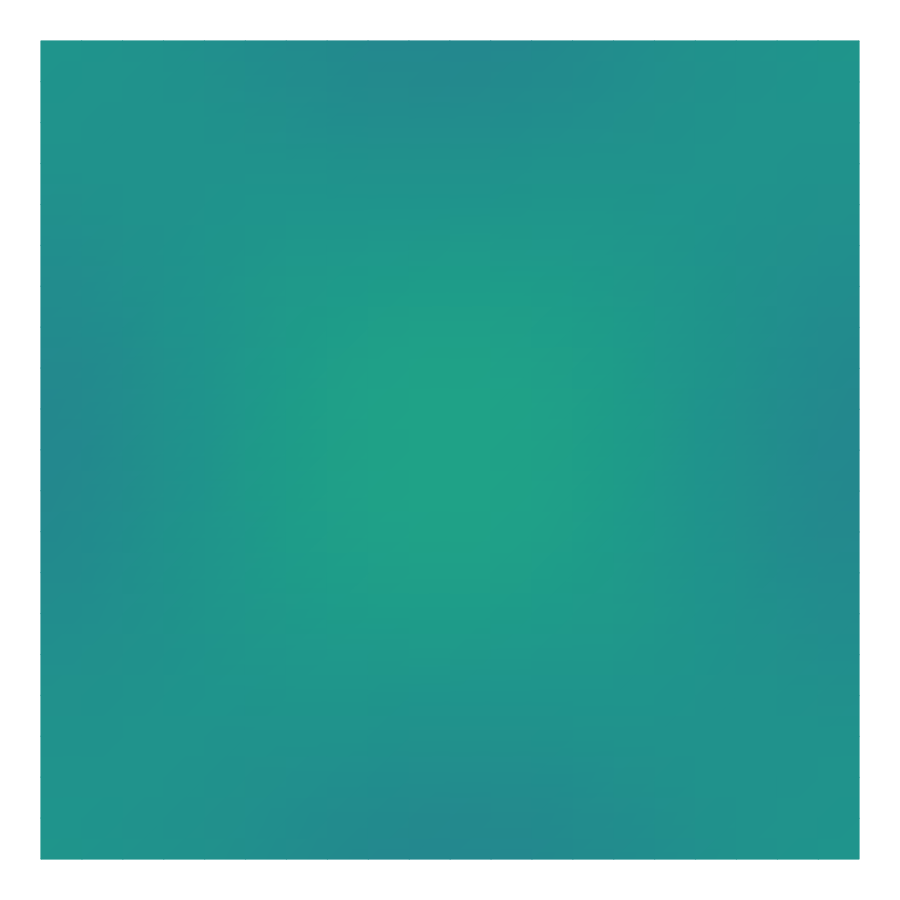}
       \end{minipage}

       \vspace{5pt}
       \textbf{H = 0.3} \\[2pt]
       \begin{minipage}{0.19\textwidth}
           \centering
           \includegraphics[width=\linewidth]{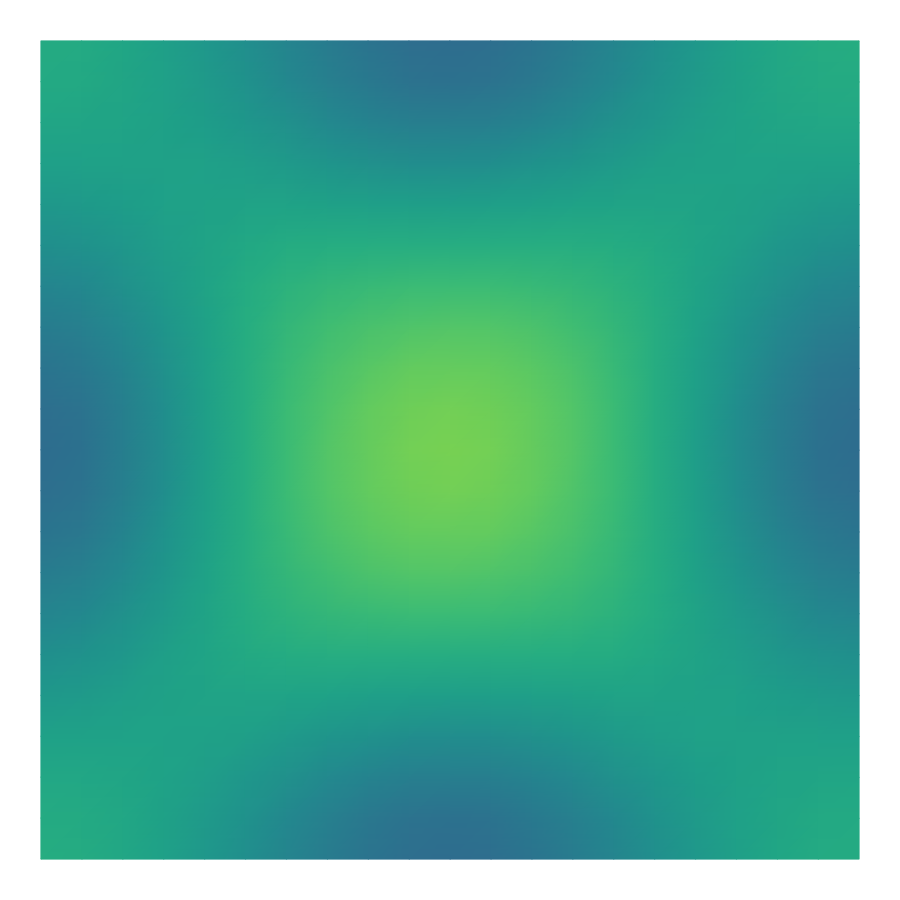}
       \end{minipage}
       \begin{minipage}{0.19\textwidth}
           \centering
           \includegraphics[width=\linewidth]{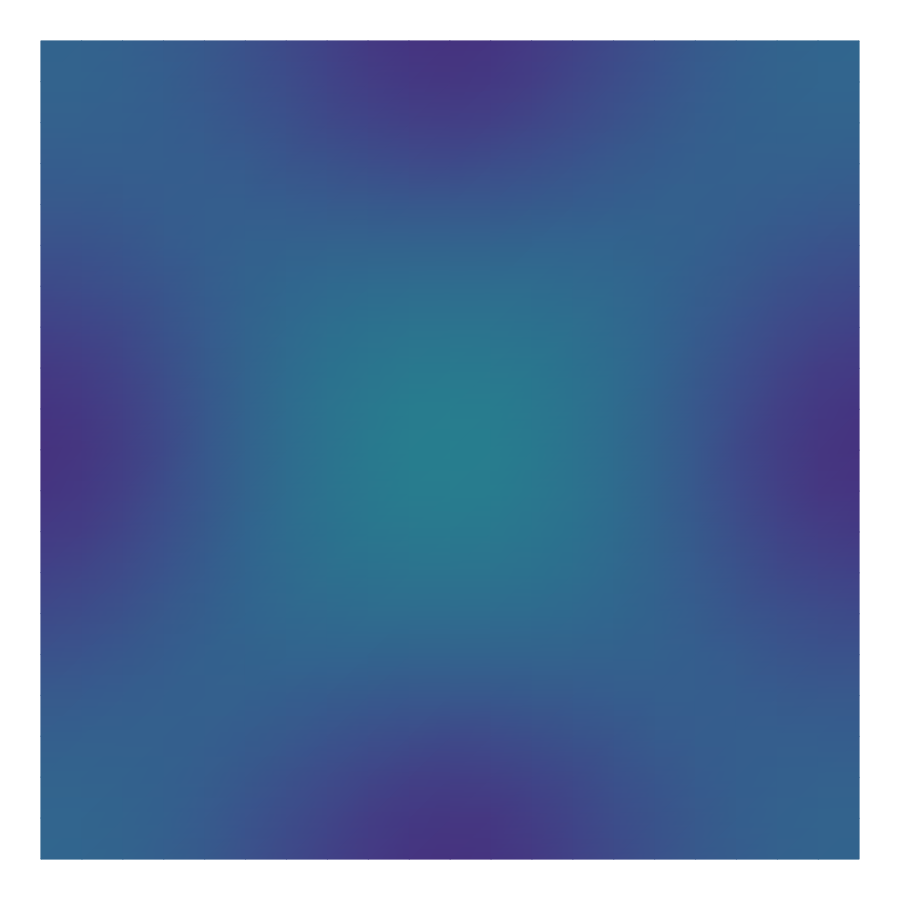}
       \end{minipage}
       \begin{minipage}{0.19\textwidth}
           \centering
           \includegraphics[width=\linewidth]{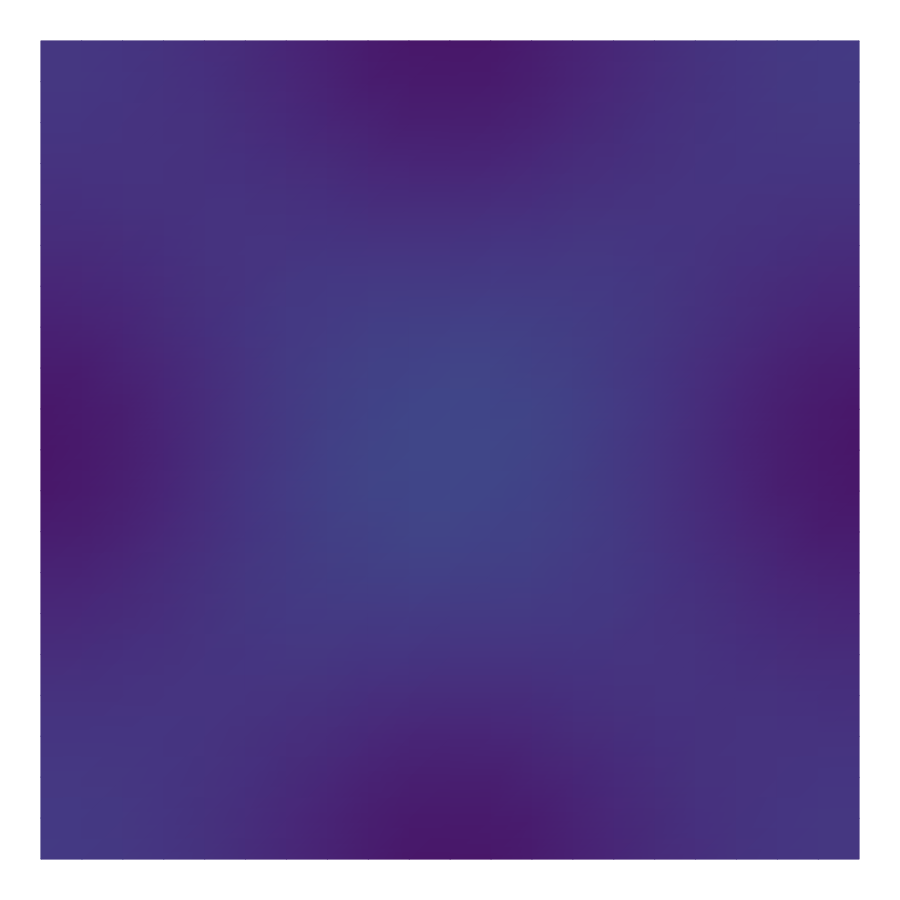}
       \end{minipage}
       \begin{minipage}{0.19\textwidth}
           \centering
           \includegraphics[width=\linewidth]{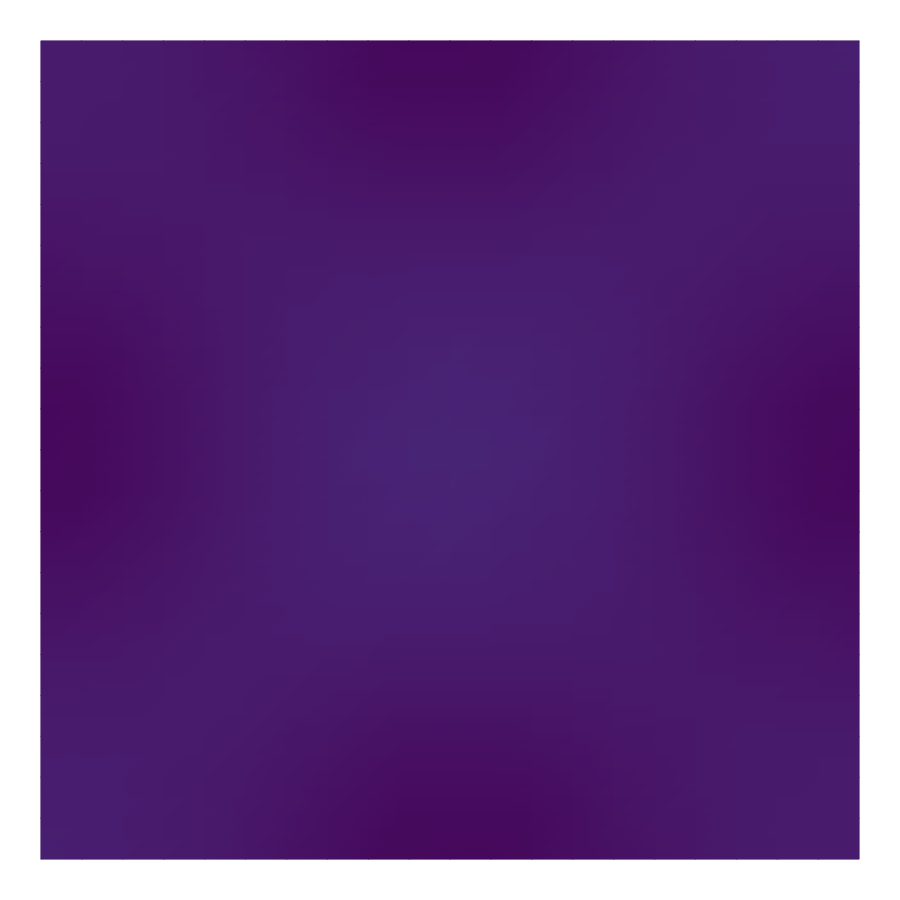}
       \end{minipage}
       \begin{minipage}{0.19\textwidth}
           \centering
           \includegraphics[width=\linewidth]{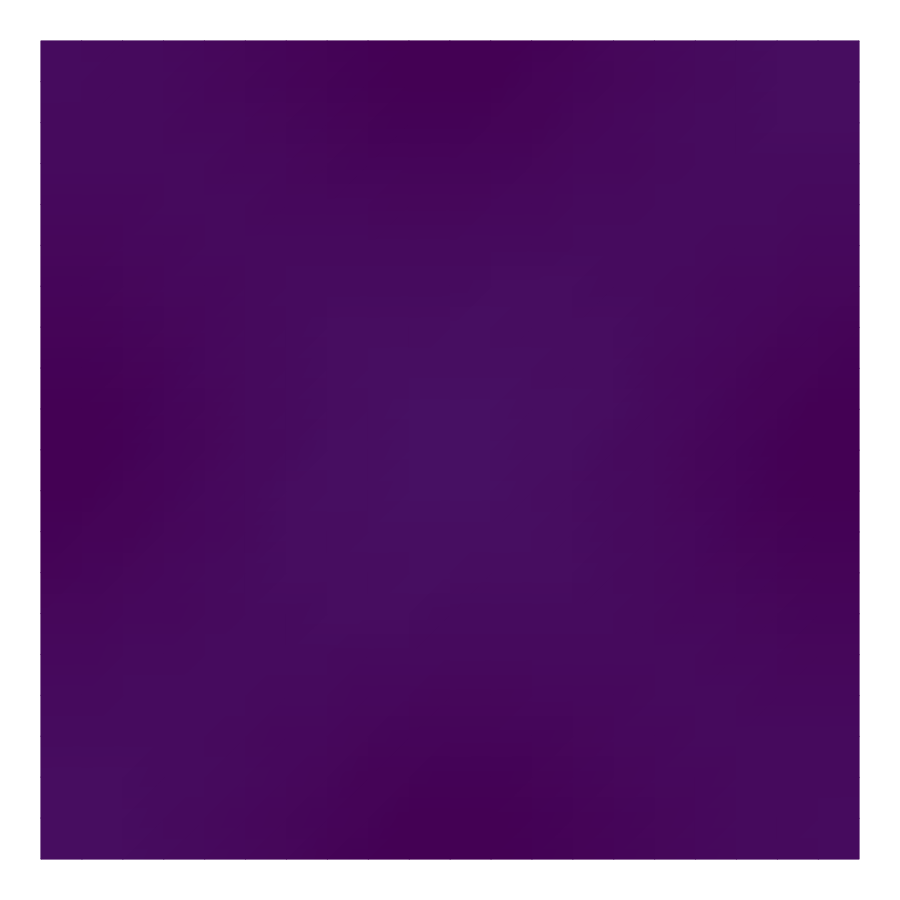}
       \end{minipage}
   \end{minipage}%
   \hspace{2pt}
   \begin{minipage}{0.035\textwidth}
       \centering
       \includegraphics[height=0.24\textheight]{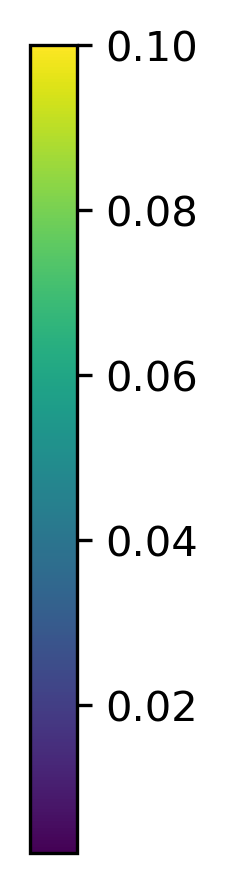}
   \end{minipage}

       \begin{minipage}{0.85\textwidth}
       \centering
       \textbf{H = 0.15} \\[2pt]
       \begin{minipage}{0.19\textwidth}
           \centering
           \includegraphics[width=\linewidth]{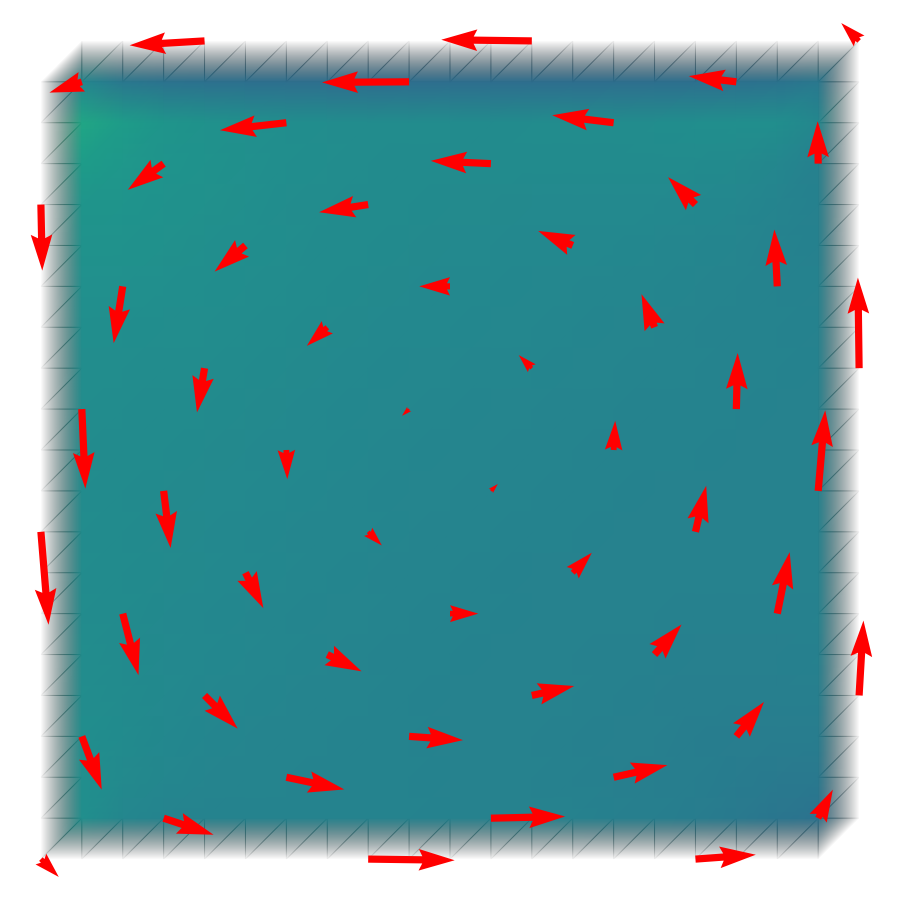}
       \end{minipage}
       \begin{minipage}{0.19\textwidth}
           \centering
           \includegraphics[width=\linewidth]{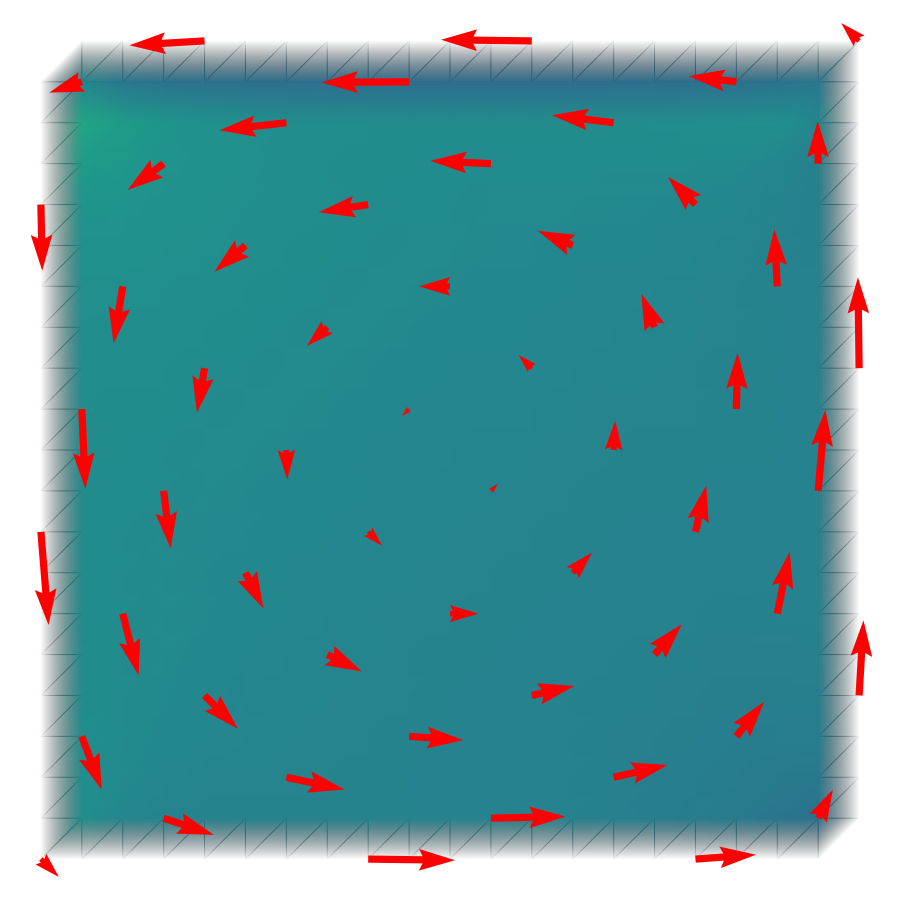}
       \end{minipage}
       \begin{minipage}{0.19\textwidth}
           \centering
           \includegraphics[width=\linewidth]{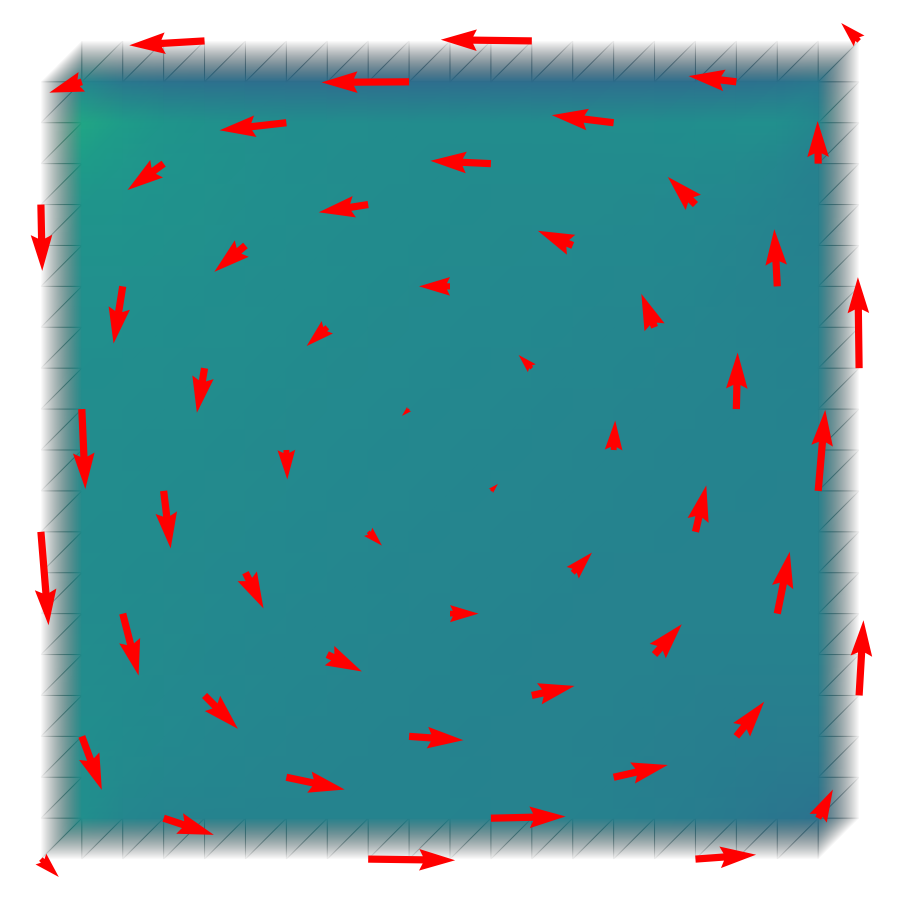}
       \end{minipage}
       \begin{minipage}{0.19\textwidth}
           \centering
           \includegraphics[width=\linewidth]{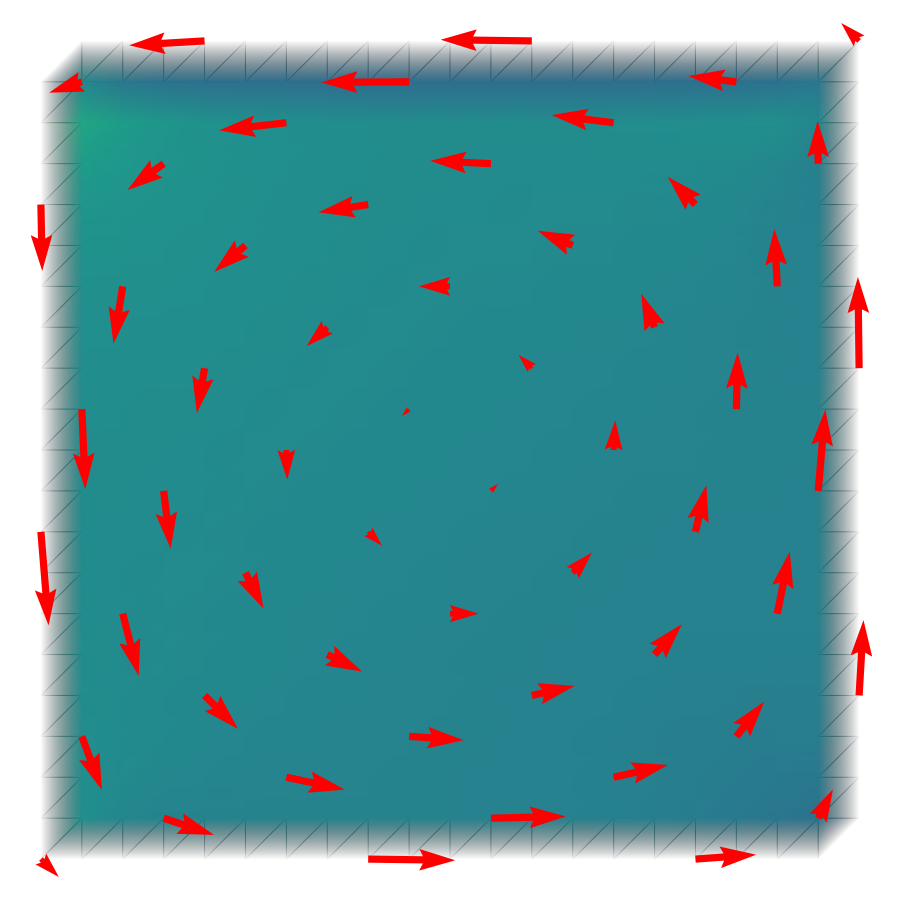}
       \end{minipage}
       \begin{minipage}{0.19\textwidth}
           \centering
           \includegraphics[width=\linewidth]{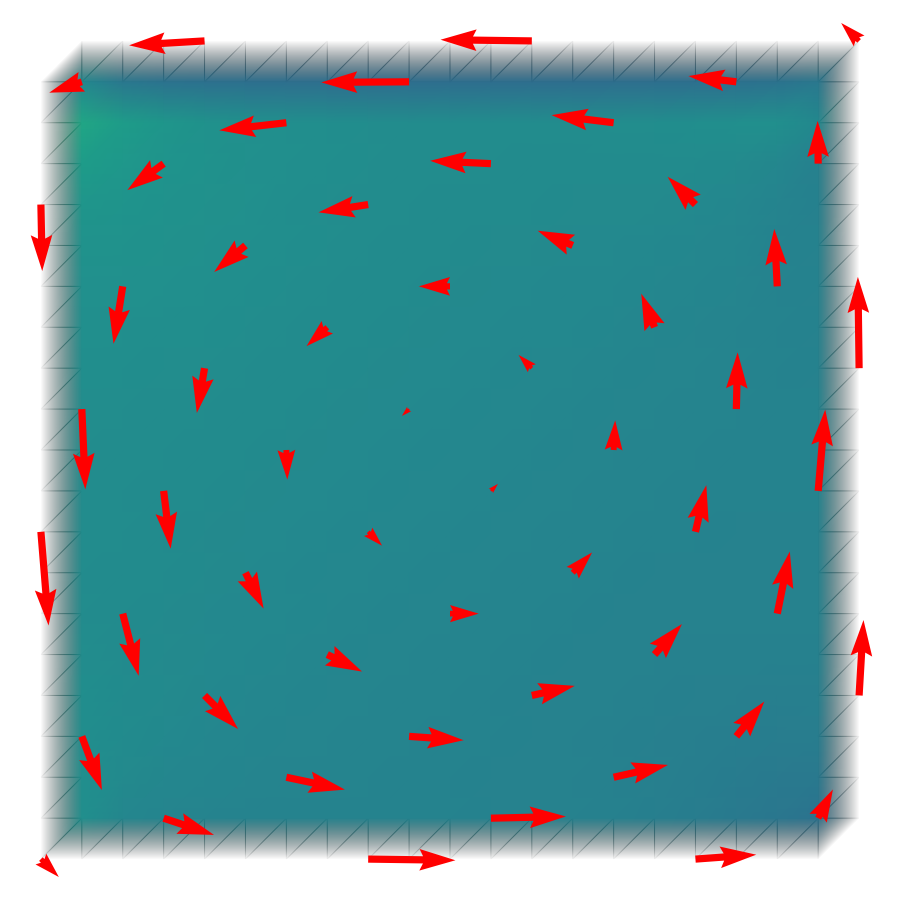}
       \end{minipage}

       \vspace{5pt}
       \textbf{H = 0.3} \\[2pt]
       \begin{minipage}{0.19\textwidth}
           \centering
           \includegraphics[width=\linewidth]{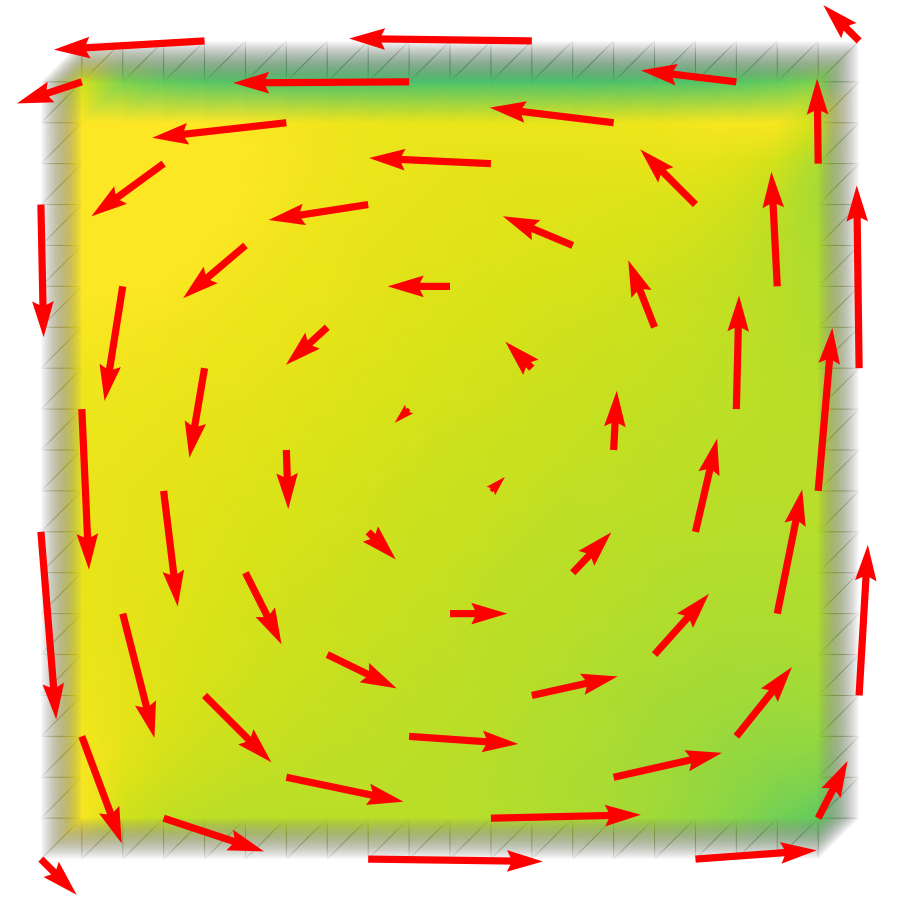}
       \end{minipage}
       \begin{minipage}{0.19\textwidth}
           \centering
           \includegraphics[width=\linewidth]{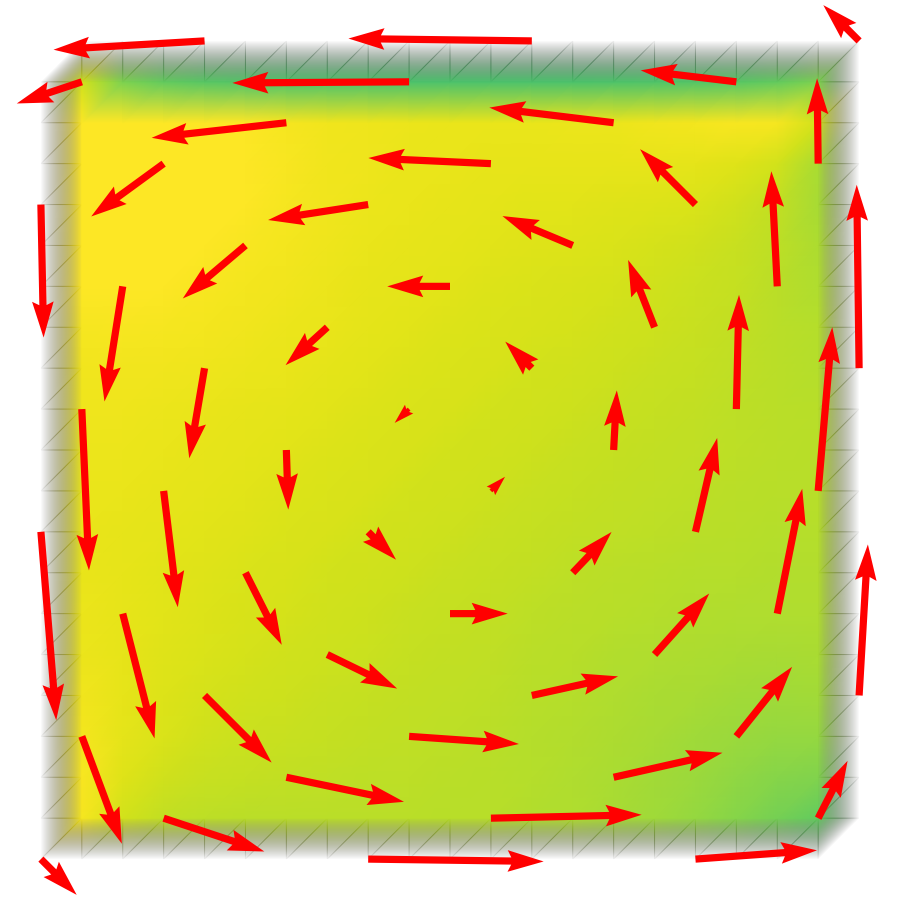}
       \end{minipage}
       \begin{minipage}{0.19\textwidth}
           \centering
           \includegraphics[width=\linewidth]{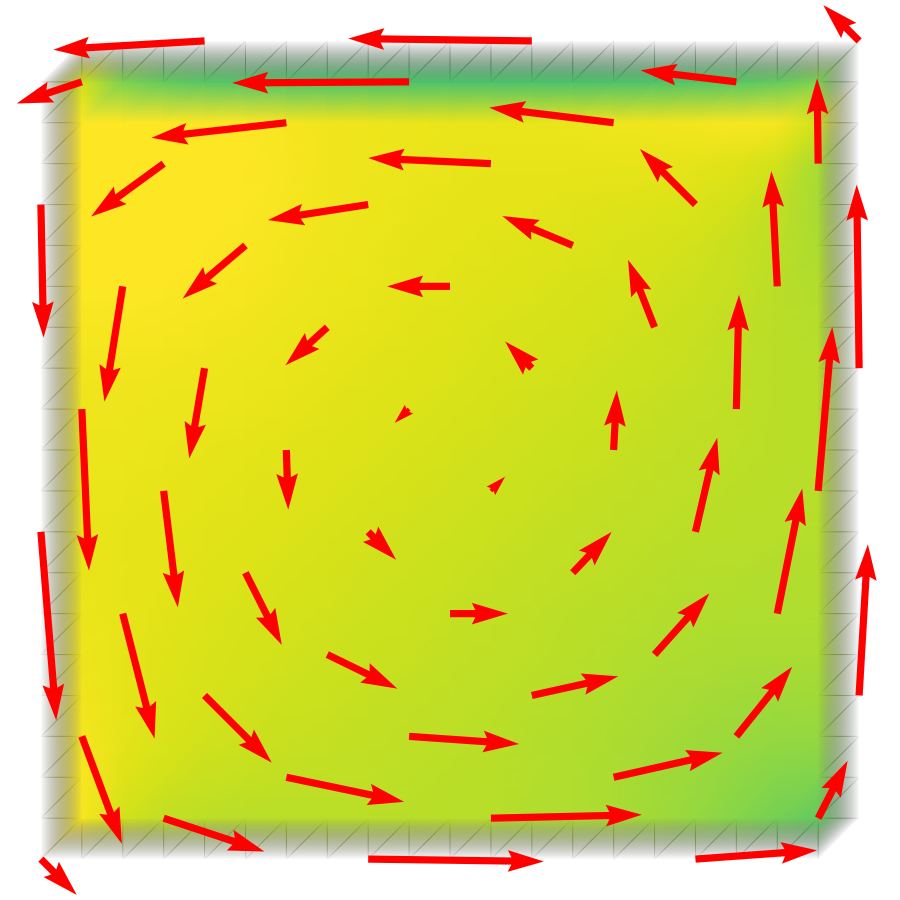}
       \end{minipage}
       \begin{minipage}{0.19\textwidth}
           \centering
           \includegraphics[width=\linewidth]{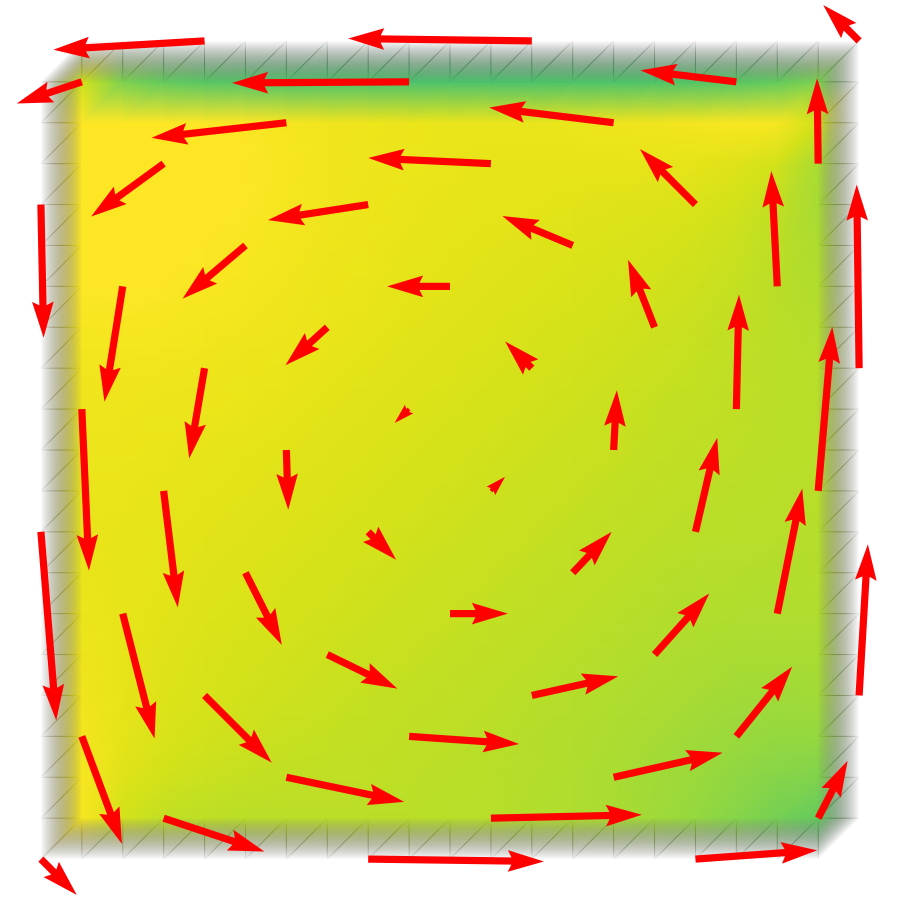}
       \end{minipage}
       \begin{minipage}{0.19\textwidth}
           \centering
           \includegraphics[width=\linewidth]{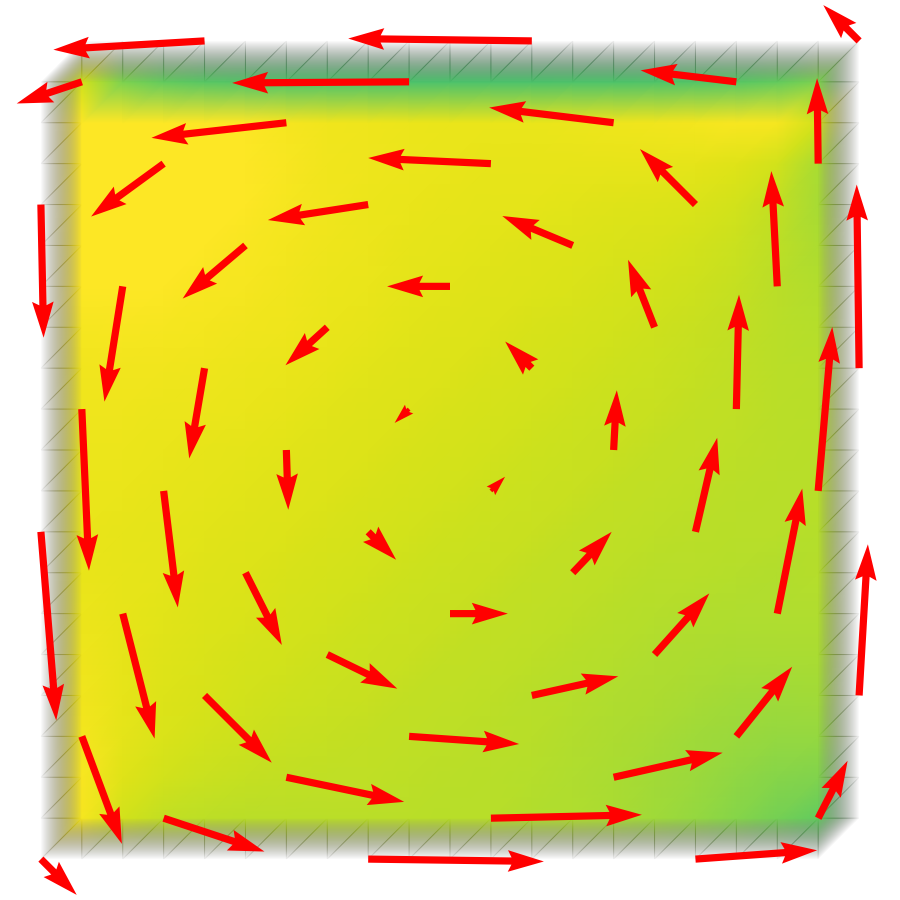}
       \end{minipage}
   \end{minipage}%
   \hspace{2pt}
   \begin{minipage}{0.035\textwidth}
       \centering
       \includegraphics[height=0.24\textheight]{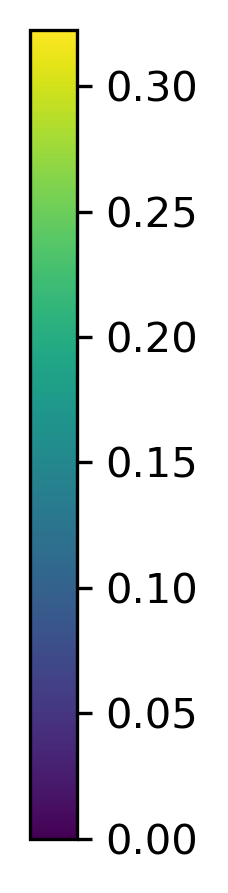}
   \end{minipage}
\caption{
Snapshots at $0.96\,T_c$ ($T_c$: critical temperature), 
initialized with a small amplitude ($|\psi(0)| \approx 0.1$). 
Rows 1 and 2 show the superconducting order parameter magnitude $|\psi|$ 
for $H=0.15$ (row 1) and $H=0.3$ (row 2), 
at times $t=20, 40, 60, 80, 100$. 
Rows 3 and 4 show the corresponding magnetic field distribution $\mathrm{curl}\,A$ 
for $H=0.15$ (row 3) and $H=0.3$ (row 4). 
Comparison of rows 1–2 indicates that the decay of $|\psi|$ 
is faster and reaches smaller values at $H=0.3$. 
Arrows indicate the direction of the vector potential $A$. 
The two blocks (upper and lower panels) correspond to two independent datasets (fig1 and figA1)
}
   \label{fig.1}
\end{figure}

The second numerical experiment (Figure~\ref{fig.1}) is conducted near the critical temperature $T_c$. 
The effect of the magnetic field on superconductivity at $T=0.96T_c$ is simulated in the domain $\Omega = (-\pi,\pi)$ 
with fixed time step $\tau = 0.5$. The initial states are $\psi = 0.08+i0.06$ and $A\approx(0,0)$. 
Two cases with $H = 0.15$ and $H=0.3$ are compared. 
By comparing the first and second rows in Figure~\ref{fig.1}, one observes that at the same times 
a larger $H$ yields a smaller average order parameter, which is further illustrated in Figure~\ref{fig.sup}. 
The results indicate that the decay of the order parameter is noticeably slower for smaller $H$, 
suggesting that weaker magnetic fields delay the breakdown of superconductivity.

\begin{figure}[htbp!]
    \centering
    \includegraphics[width=0.45\linewidth]{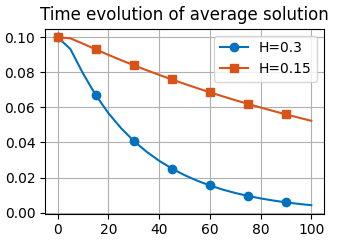}
\caption{
Time evolution of the spatially averaged order parameter corresponding to Figures~\ref{fig.3} and \ref{fig.4}. 
In this example, superconductivity is suppressed more strongly by the larger magnetic field, 
leading to smaller values of the order parameter modulus.
}
\label{fig.sup}
\end{figure}
\begin{figure}[!htp]
   \centering
    \begin{minipage}{0.85\textwidth}
        \centering
        \begin{minipage}{0.19\textwidth}
            \centering
            \includegraphics[width=\linewidth]{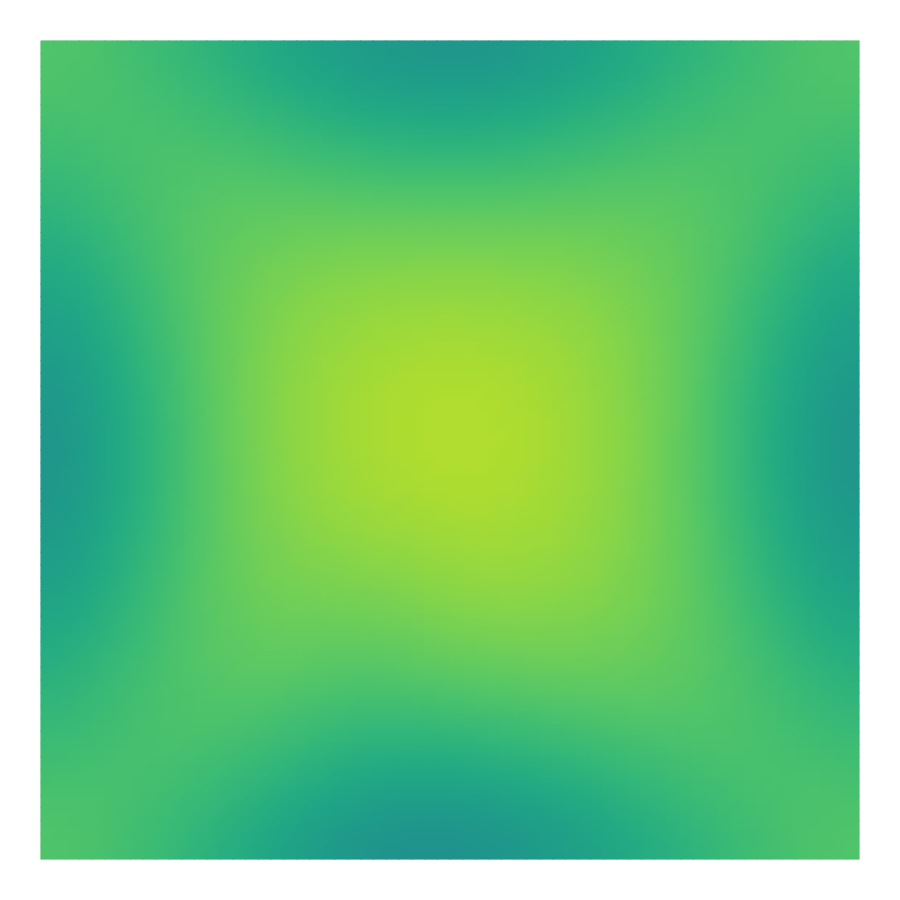}
        \end{minipage}
        \begin{minipage}{0.19\textwidth}
            \centering
            \includegraphics[width=\linewidth]{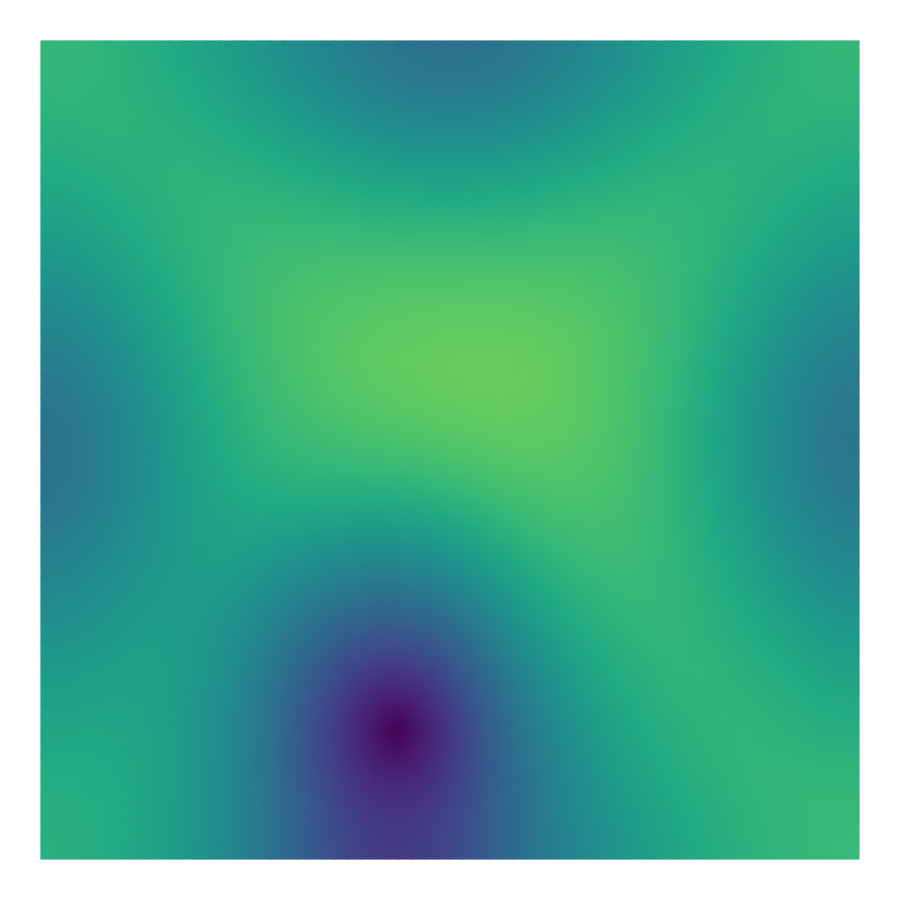}
        \end{minipage}
        \begin{minipage}{0.19\textwidth}
            \centering
            \includegraphics[width=\linewidth]{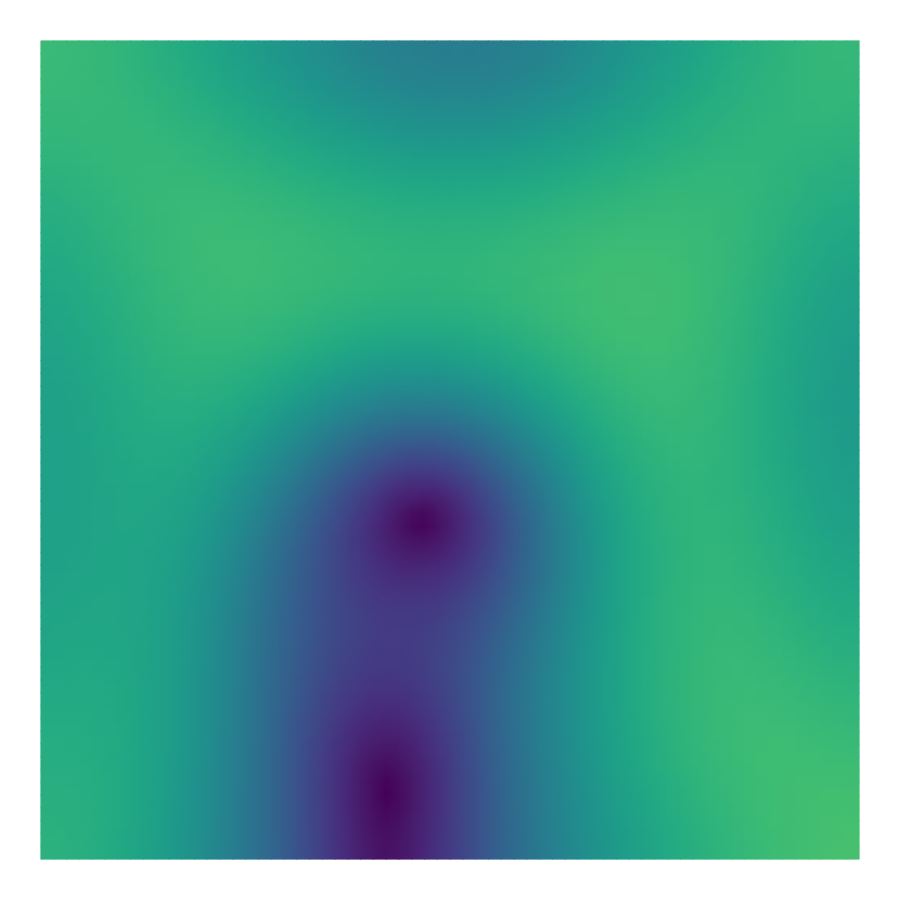}
        \end{minipage}
        \begin{minipage}{0.19\textwidth}
            \centering
            \includegraphics[width=\linewidth]{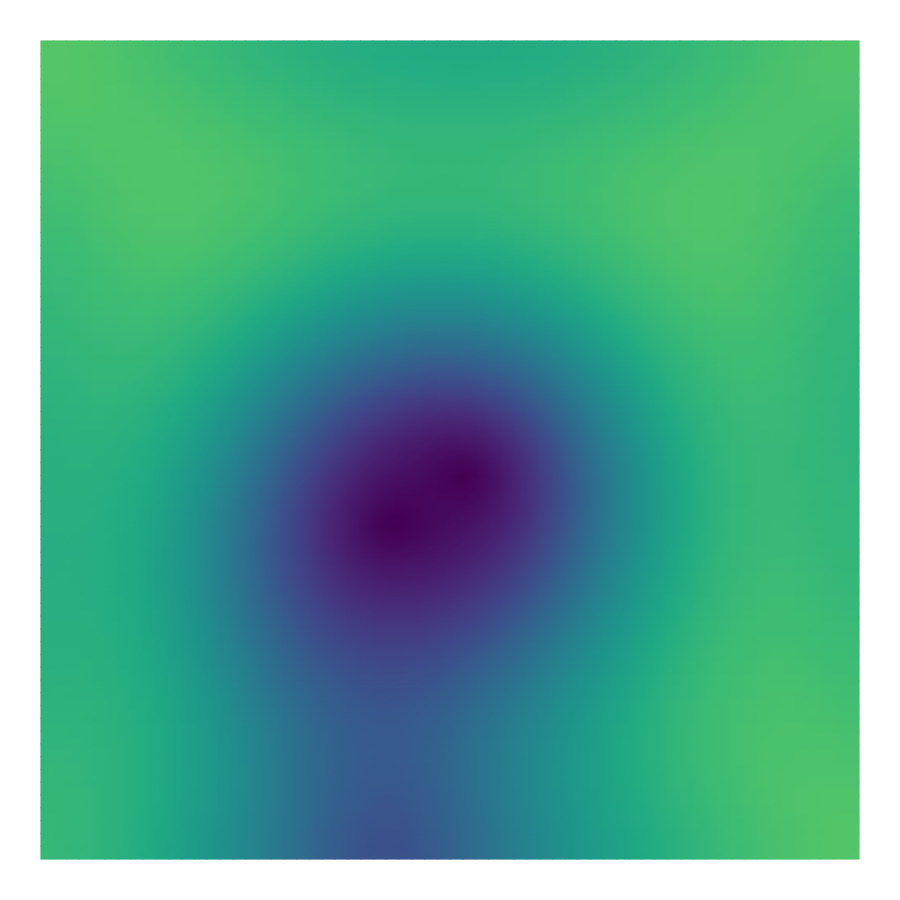}
        \end{minipage}
        \begin{minipage}{0.19\textwidth}
            \centering
            \includegraphics[width=\linewidth]{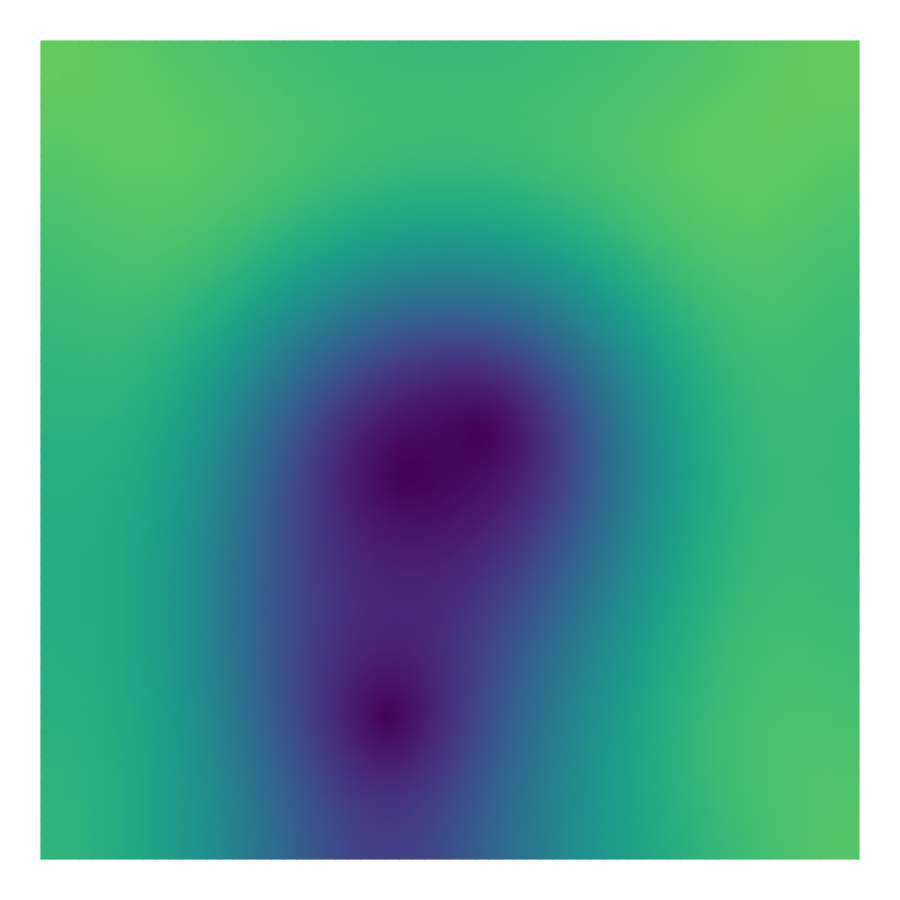}
        \end{minipage}

        \vspace{5pt}
        \begin{minipage}{0.19\textwidth}
            \centering
            \includegraphics[width=\linewidth]{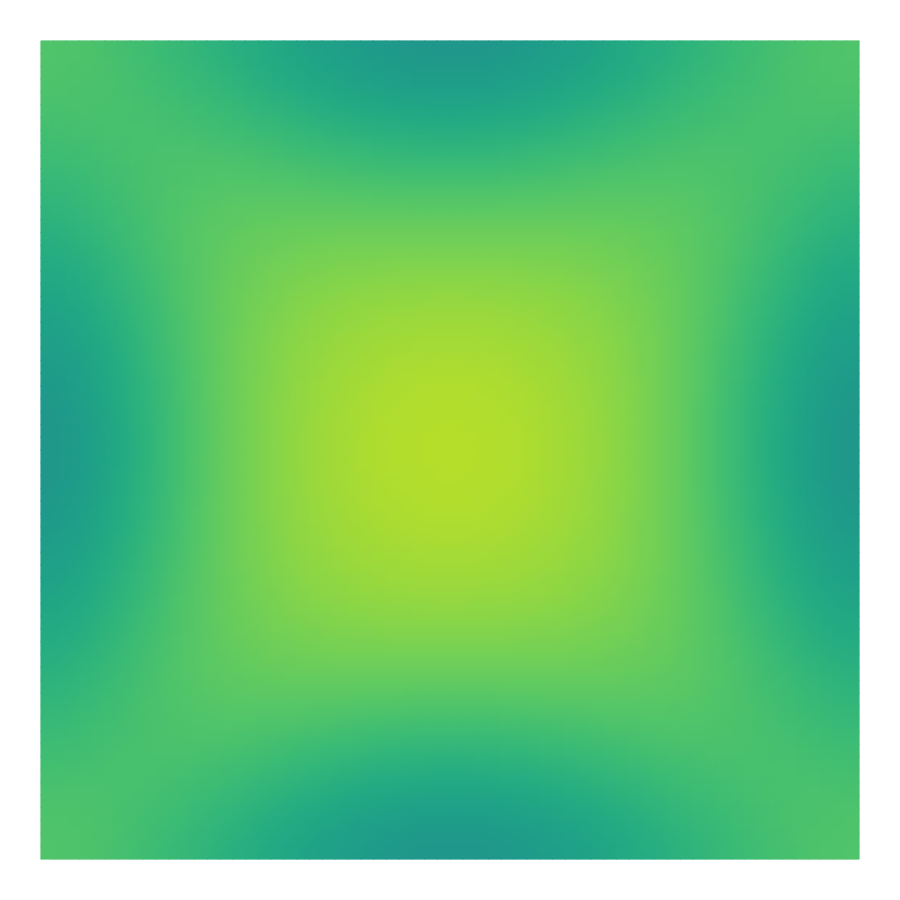}
        \end{minipage}
        \begin{minipage}{0.19\textwidth}
            \centering
            \includegraphics[width=\linewidth]{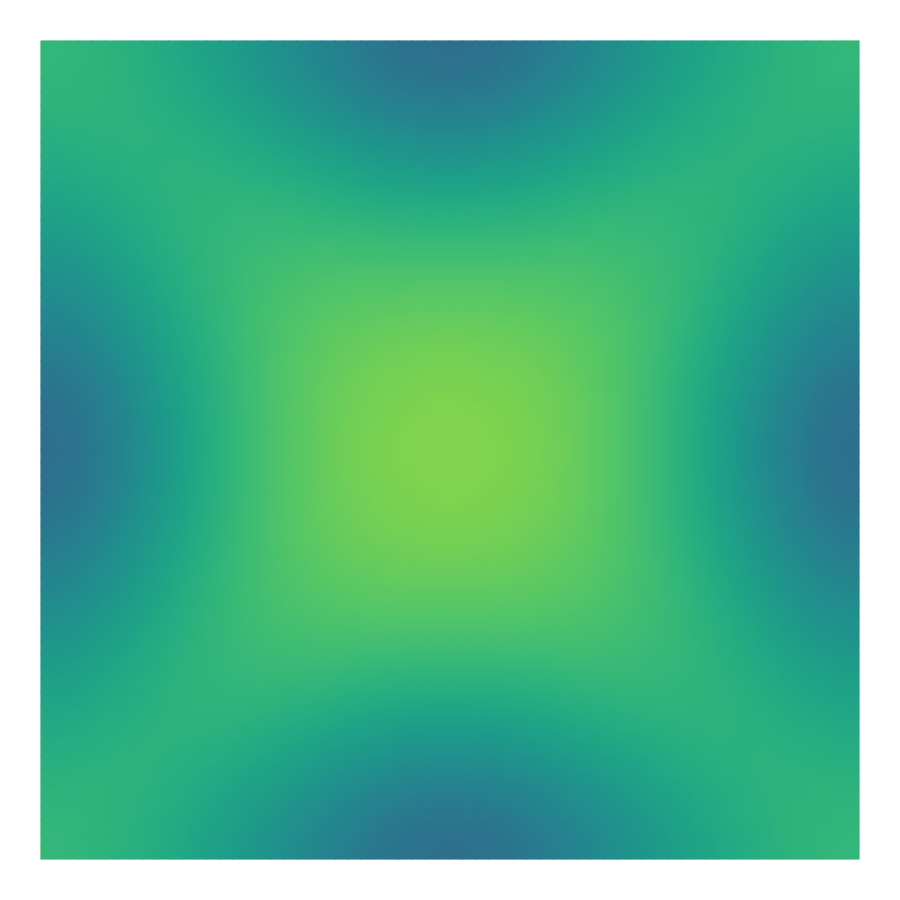}
        \end{minipage}
        \begin{minipage}{0.19\textwidth}
            \centering
            \includegraphics[width=\linewidth]{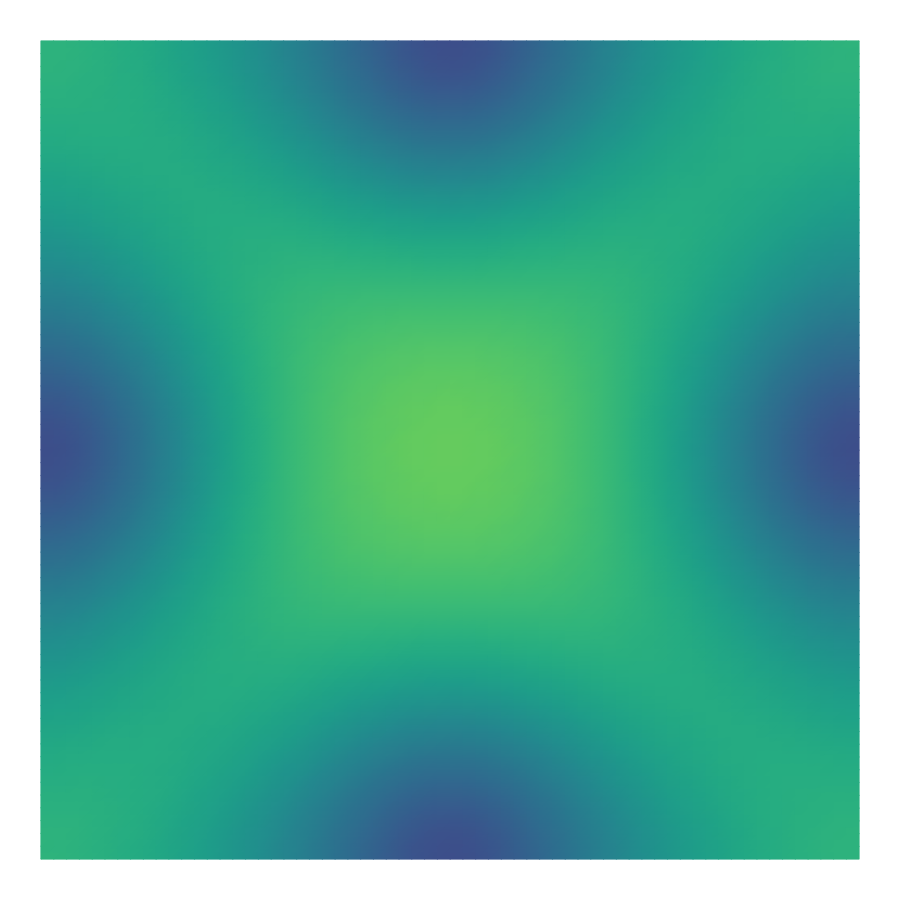}
        \end{minipage}
        \begin{minipage}{0.19\textwidth}
            \centering
            \includegraphics[width=\linewidth]{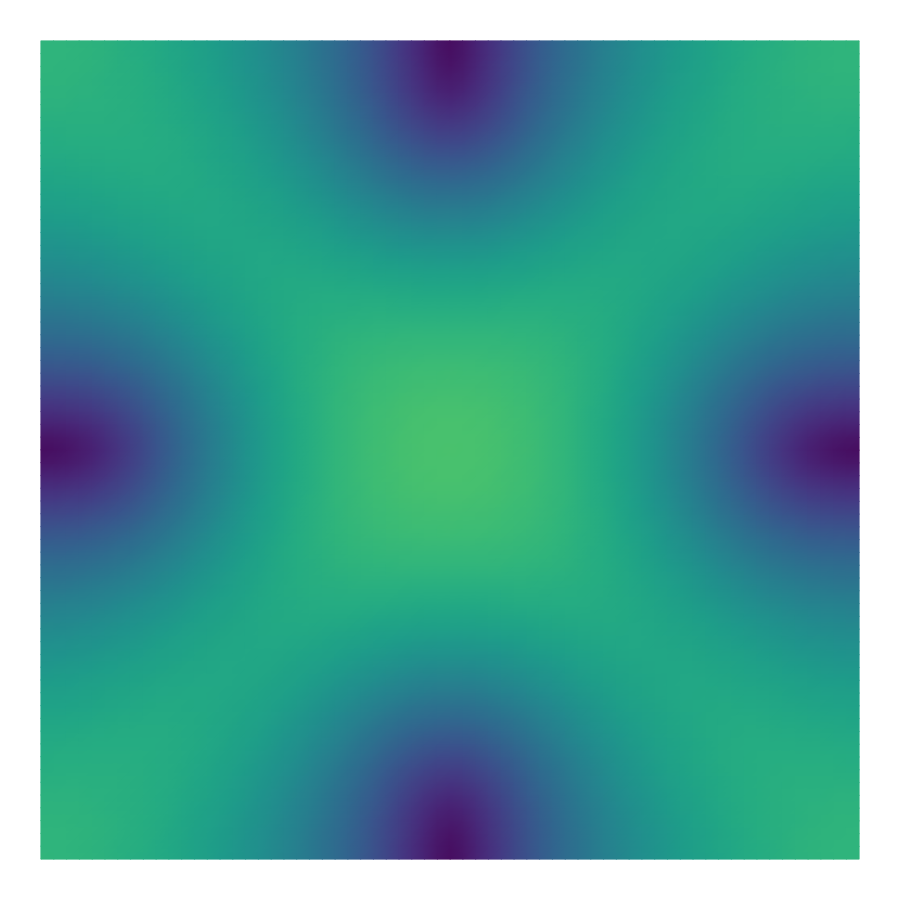}
        \end{minipage}
        \begin{minipage}{0.19\textwidth}
            \centering
            \includegraphics[width=\linewidth]{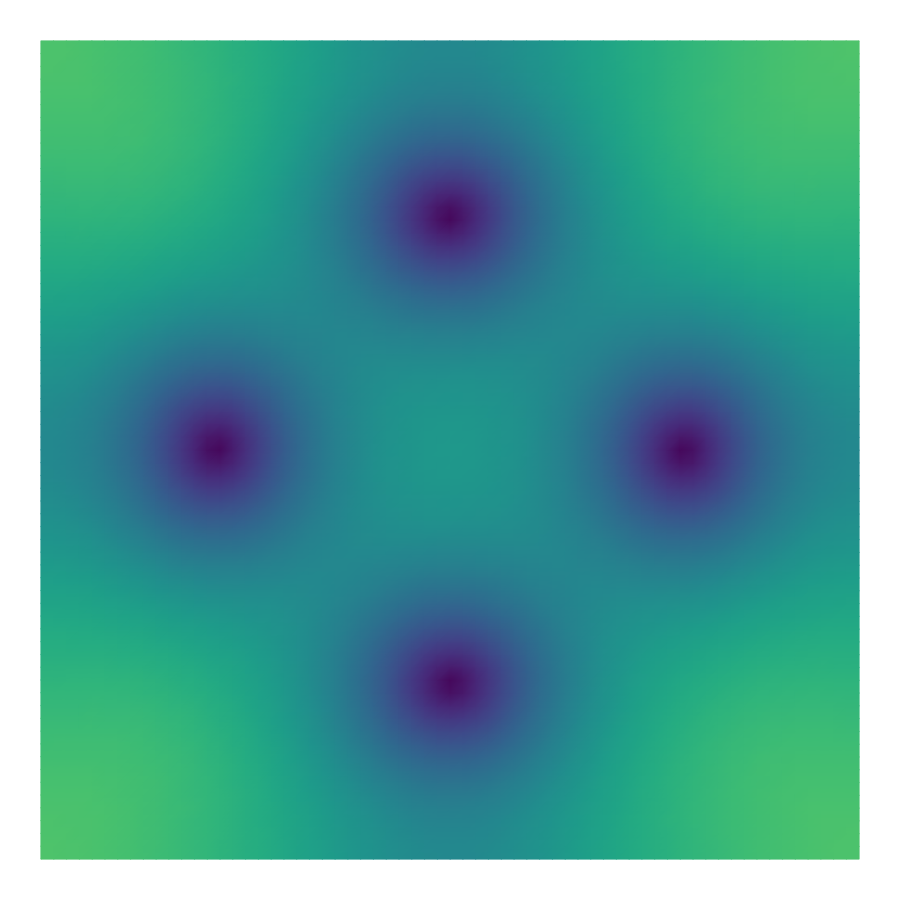}
        \end{minipage}
    \end{minipage}%
    \hspace{2pt}
    \begin{minipage}{0.035\textwidth}
        \centering
        \includegraphics[height=0.28\textheight]{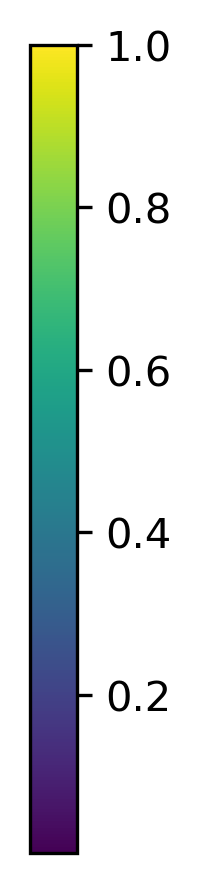}
    \end{minipage}

            \begin{minipage}{0.85\textwidth}
        \centering
        \begin{minipage}{0.19\textwidth}
            \centering
            \includegraphics[width=\linewidth]{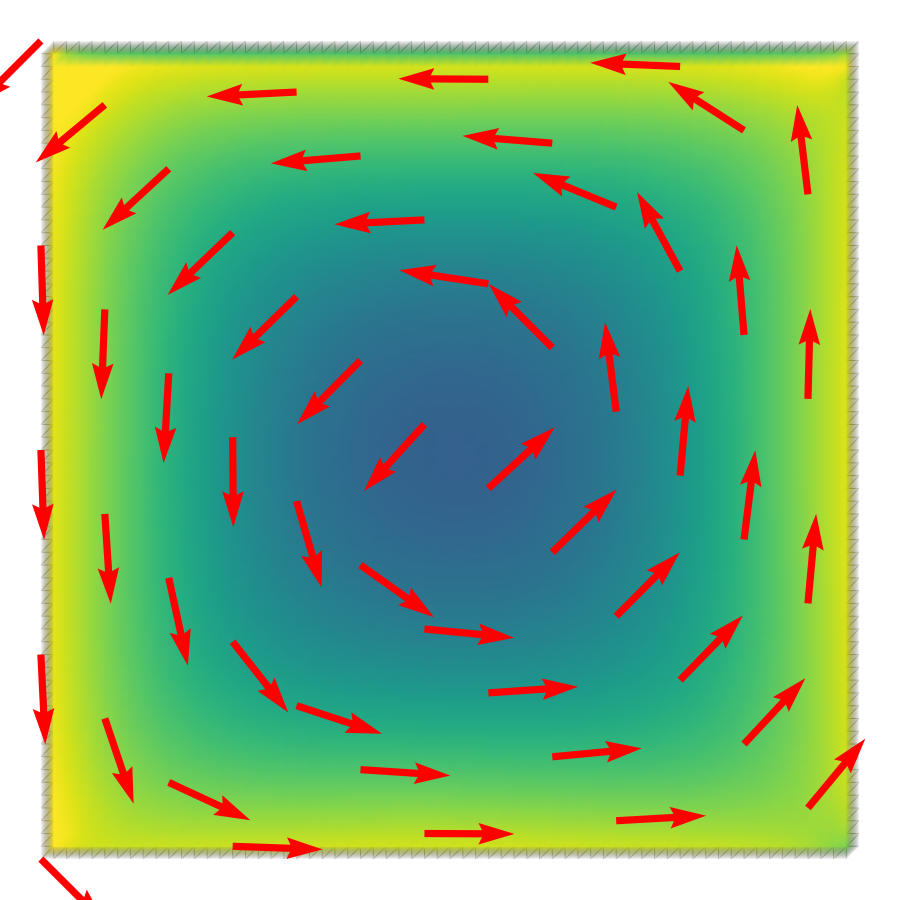}
        \end{minipage}
        \begin{minipage}{0.19\textwidth}
            \centering
            \includegraphics[width=\linewidth]{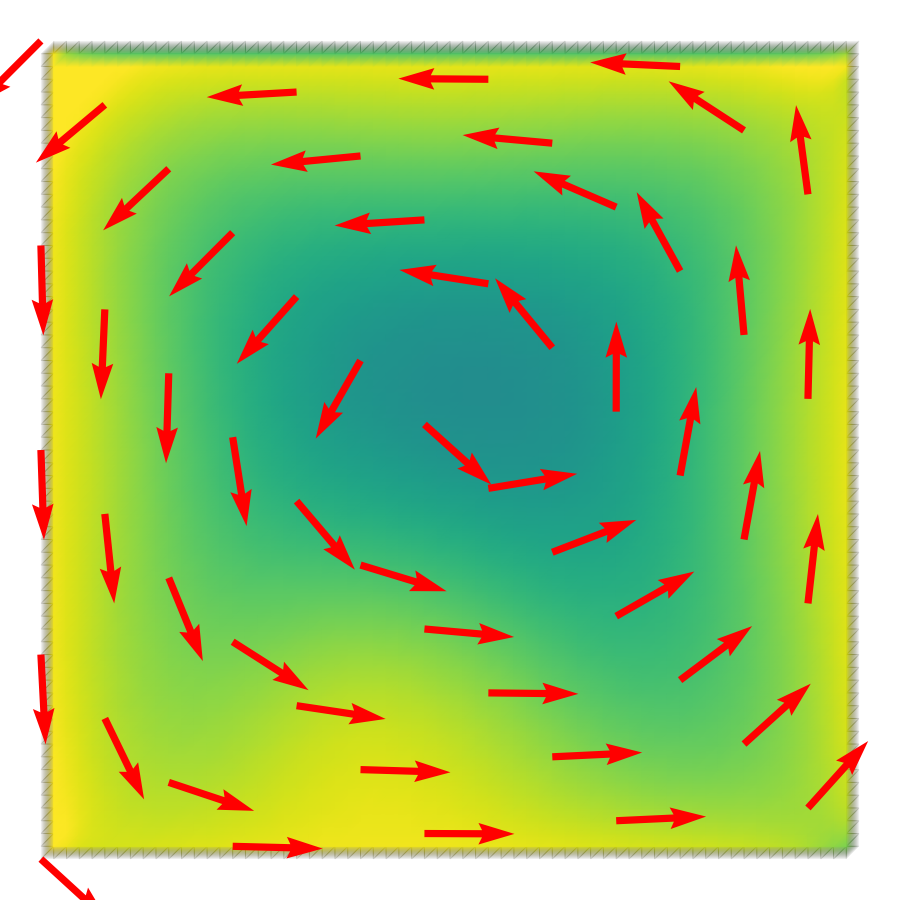}
        \end{minipage}
        \begin{minipage}{0.19\textwidth}
            \centering
            \includegraphics[width=\linewidth]{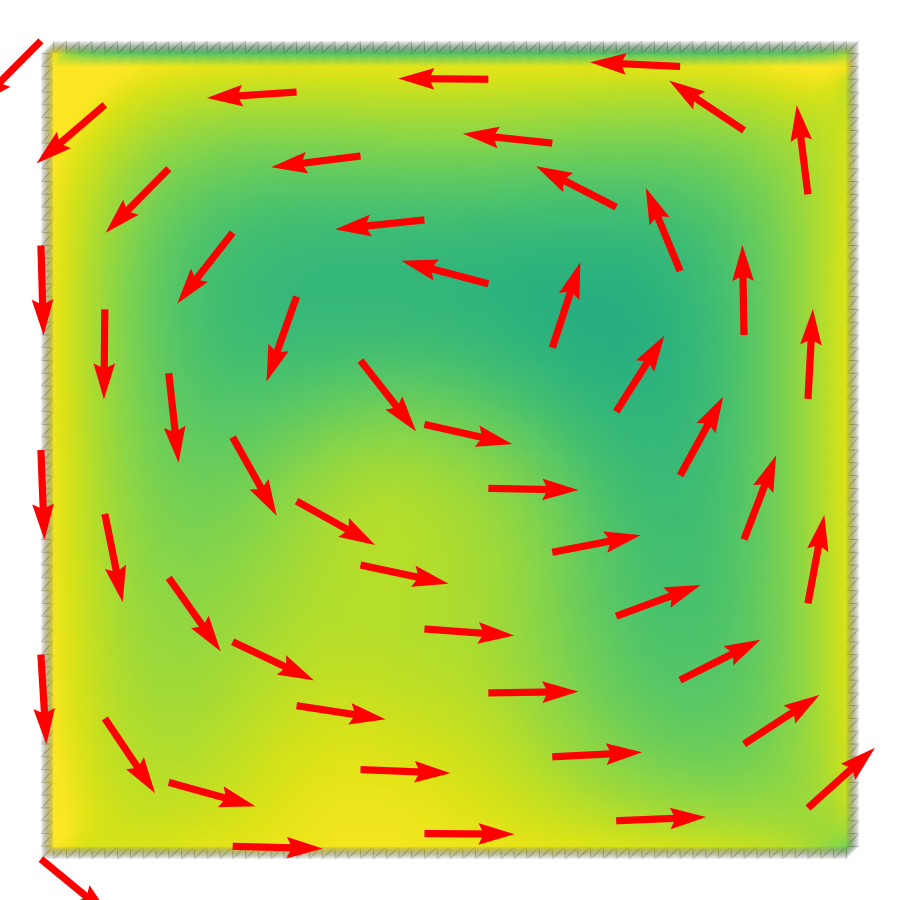}
        \end{minipage}
        \begin{minipage}{0.19\textwidth}
            \centering
            \includegraphics[width=\linewidth]{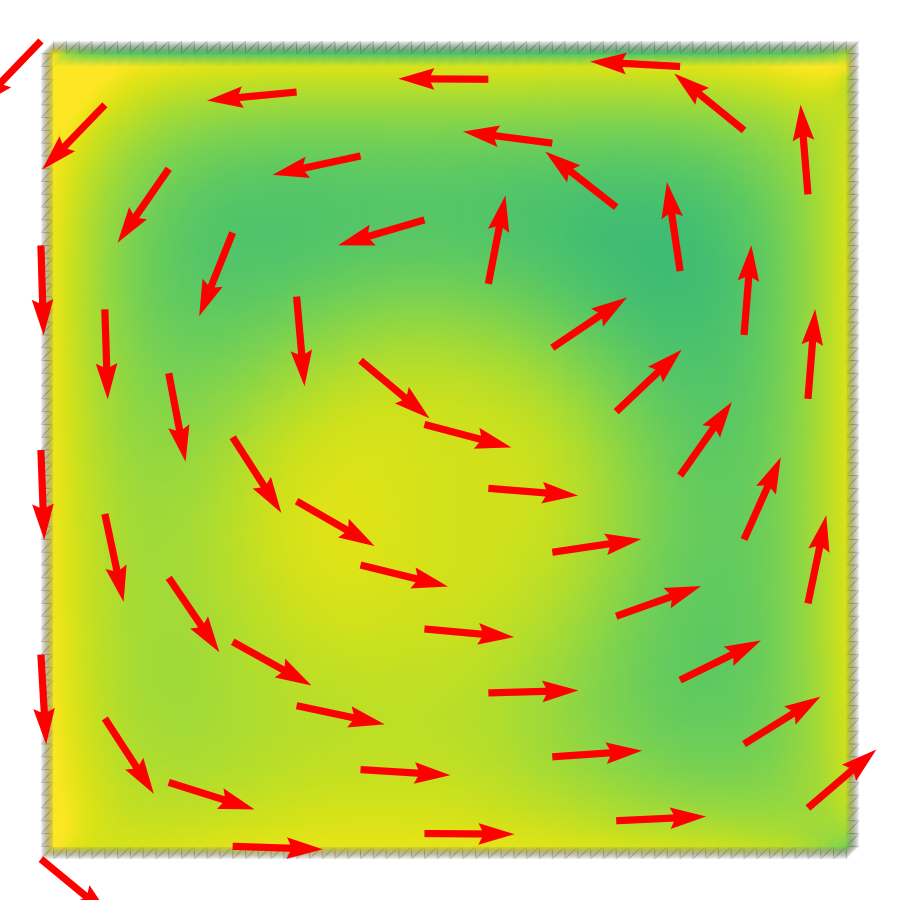}
        \end{minipage}
        \begin{minipage}{0.19\textwidth}
            \centering
            \includegraphics[width=\linewidth]{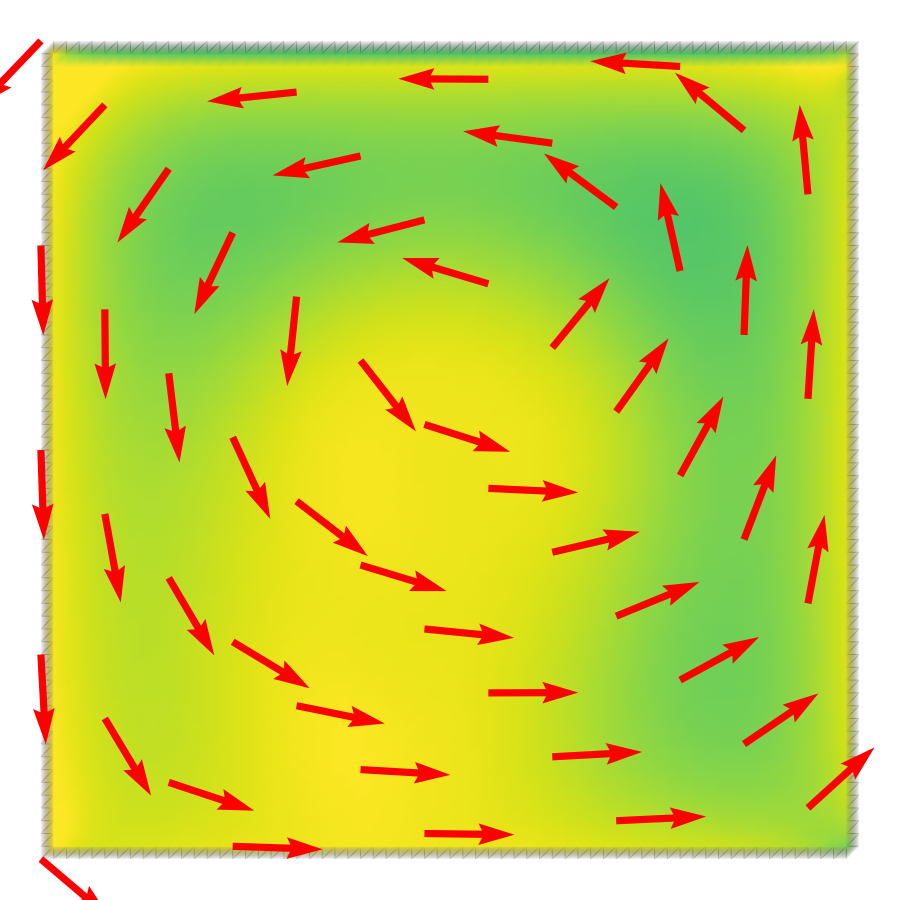}
        \end{minipage}

        \vspace{5pt}
        \begin{minipage}{0.19\textwidth}
            \centering
            \includegraphics[width=\linewidth]{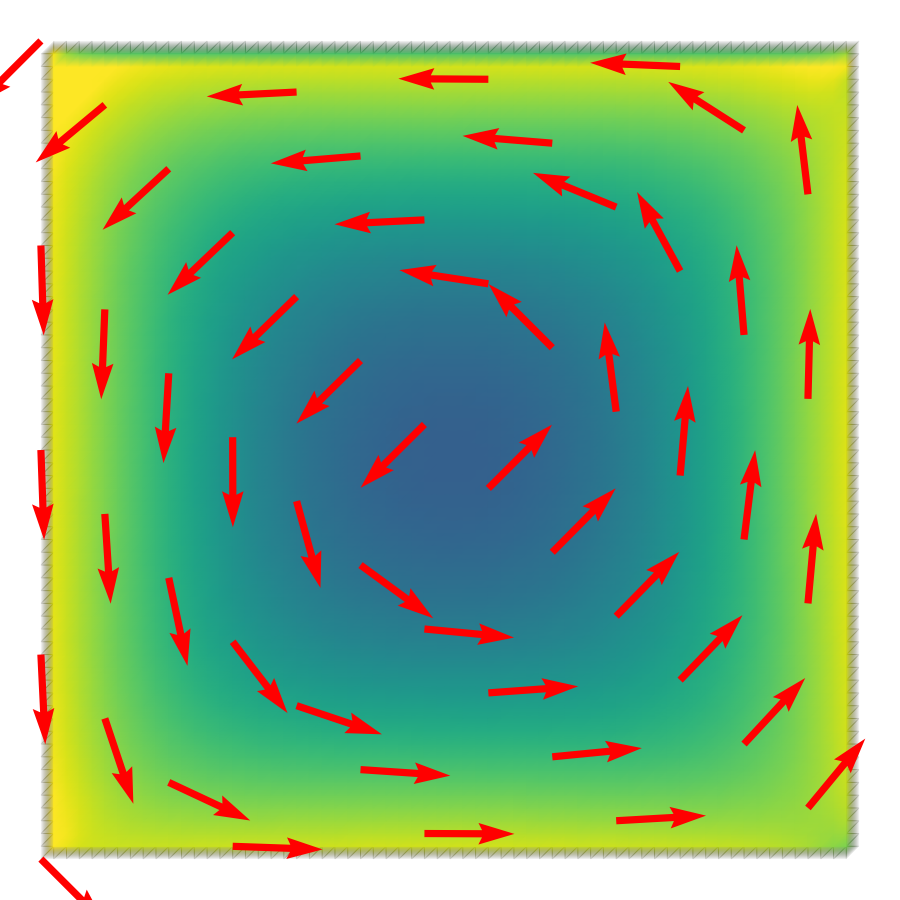}
        \end{minipage}
        \begin{minipage}{0.19\textwidth}
            \centering
            \includegraphics[width=\linewidth]{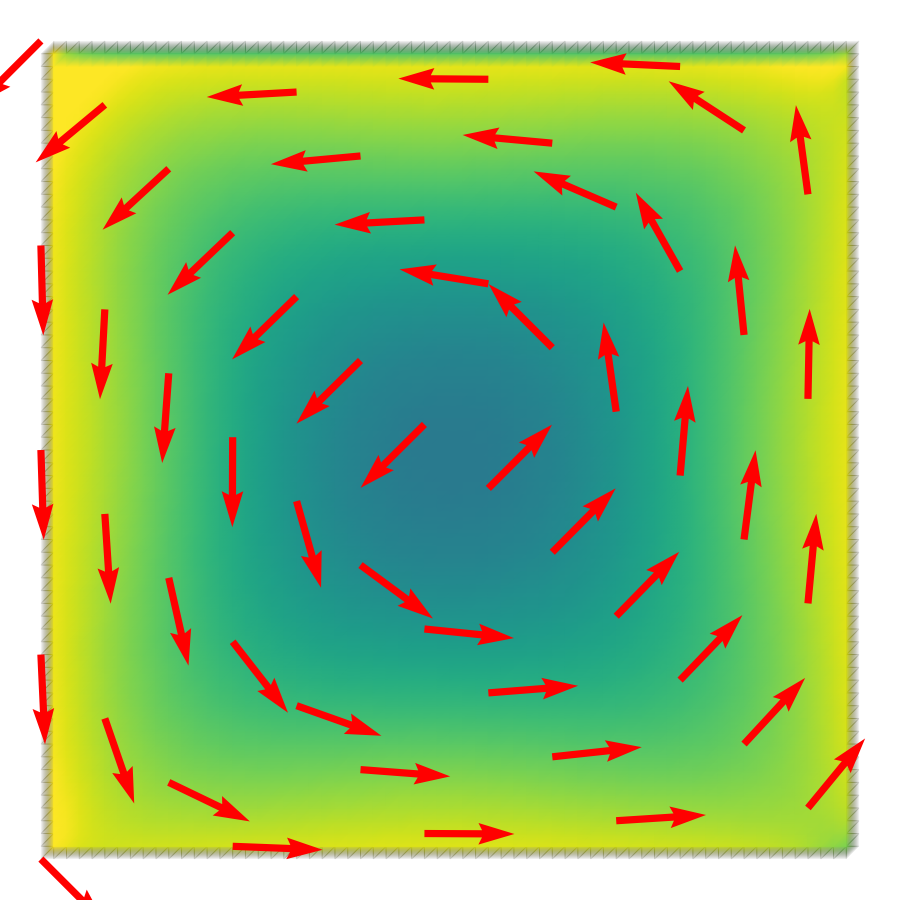}
        \end{minipage}
        \begin{minipage}{0.19\textwidth}
            \centering
            \includegraphics[width=\linewidth]{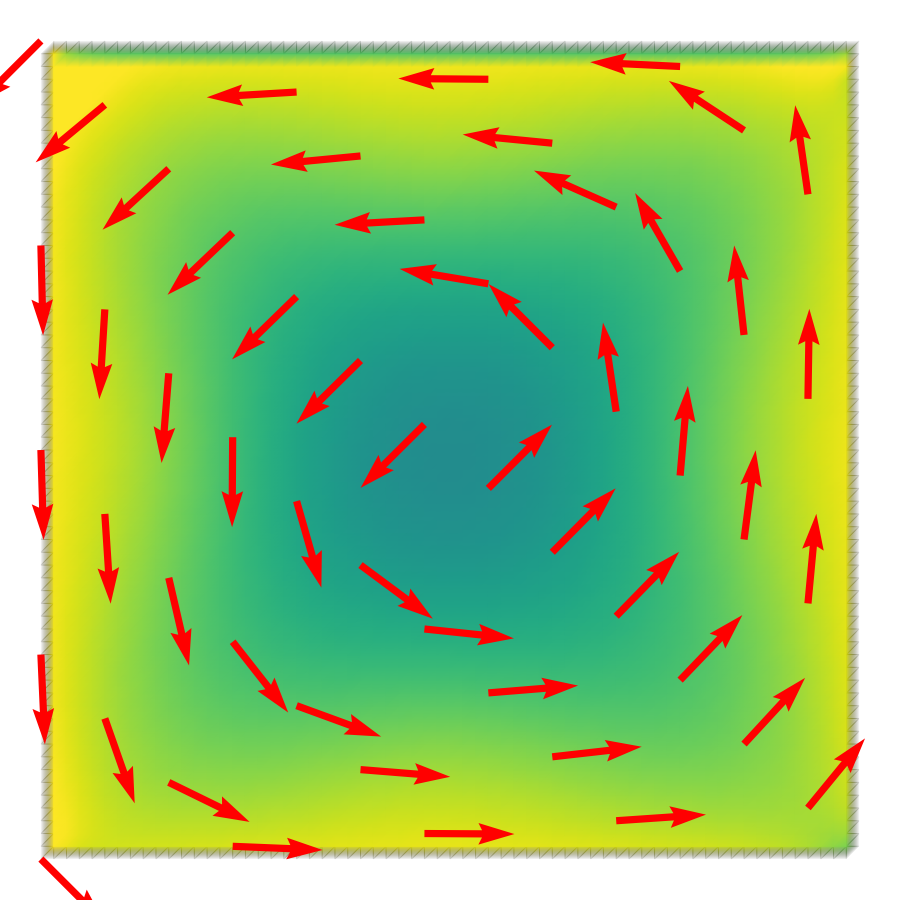}
        \end{minipage}
        \begin{minipage}{0.19\textwidth}
            \centering
            \includegraphics[width=\linewidth]{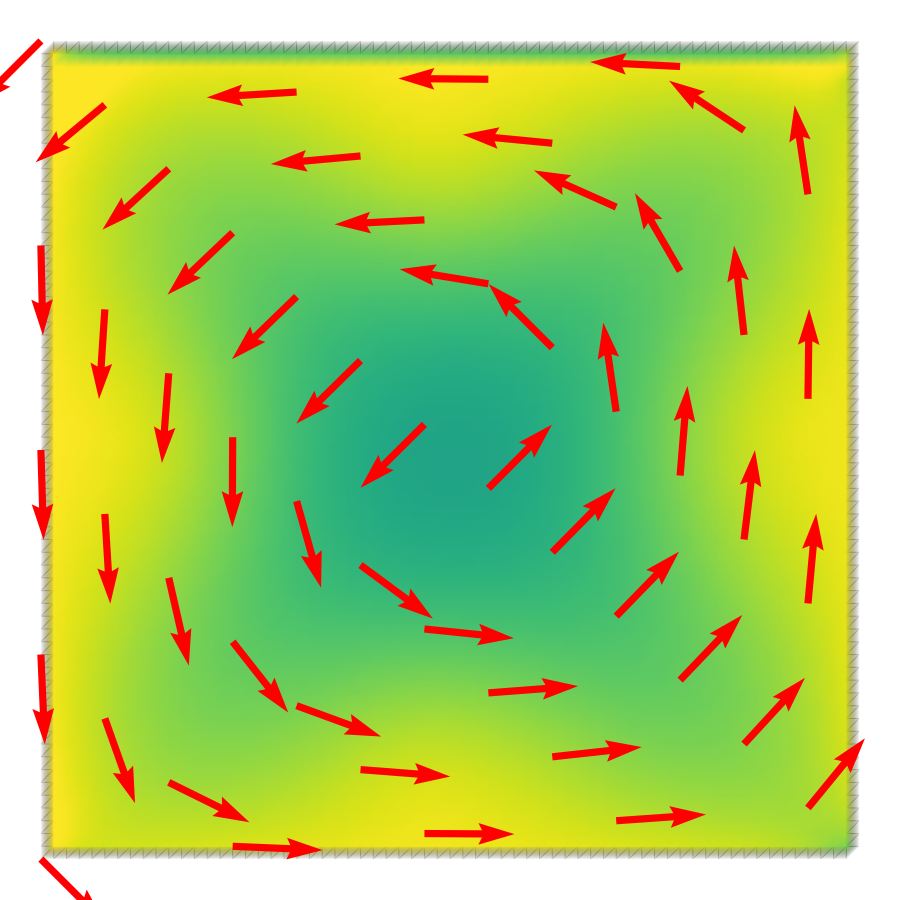}
        \end{minipage}
        \begin{minipage}{0.19\textwidth}
            \centering
            \includegraphics[width=\linewidth]{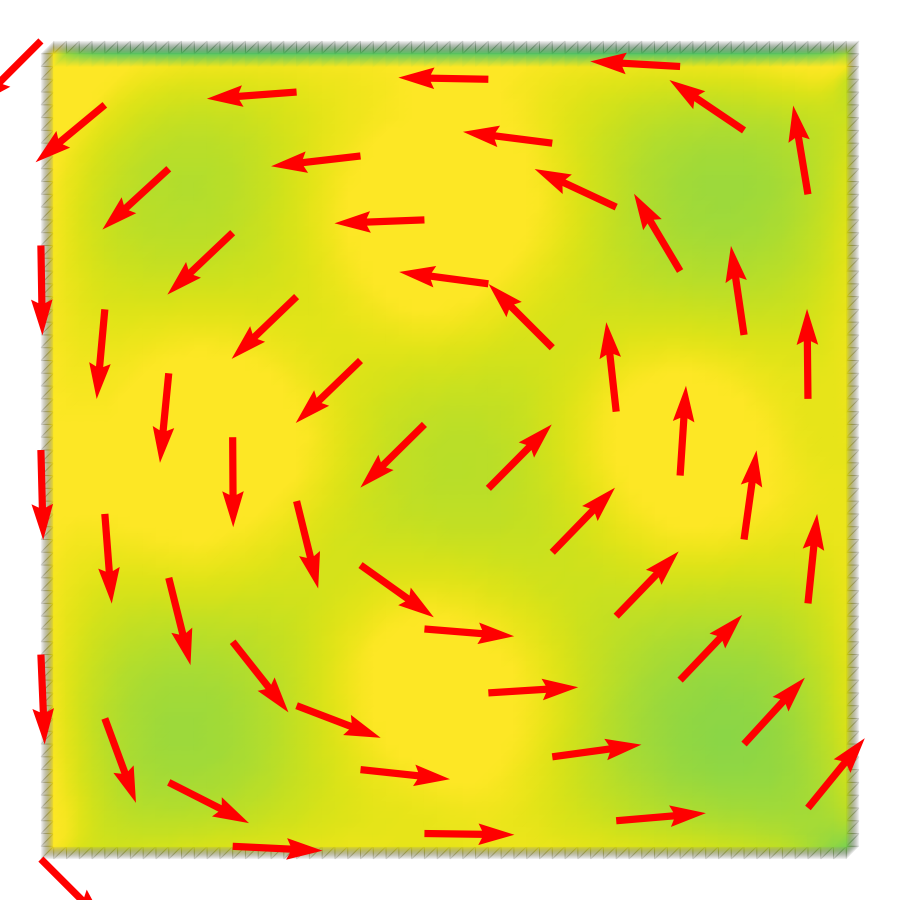}
        \end{minipage}
    \end{minipage}%
    \hspace{2pt}
    \begin{minipage}{0.035\textwidth}
        \centering
        \includegraphics[height=0.24\textheight]{figures/figA1_outputs/figA1_colorbar.png}
    \end{minipage}
    \caption{Time evolution of vortex structures of $|\psi|$ and $curl A$ in $(-\pi,\pi)$ with GL constant $\kappa = 2$ under an external magnetic field along the (0,0,1) direction (t=20, 40, 60, 80, 100, and the arrows indicate the direction of the vector potential A). The stabilization parameter is $S = 4$. The singular and even rows correspond to the cases with and without the applied inhomogeneity potential, respectively}
    \label{fig.3}
\end{figure}

\begin{figure}[!htbp]
   \centering
    \begin{minipage}{0.85\textwidth}
        \centering
        \begin{minipage}{0.19\textwidth}
            \centering
            \includegraphics[width=\linewidth]{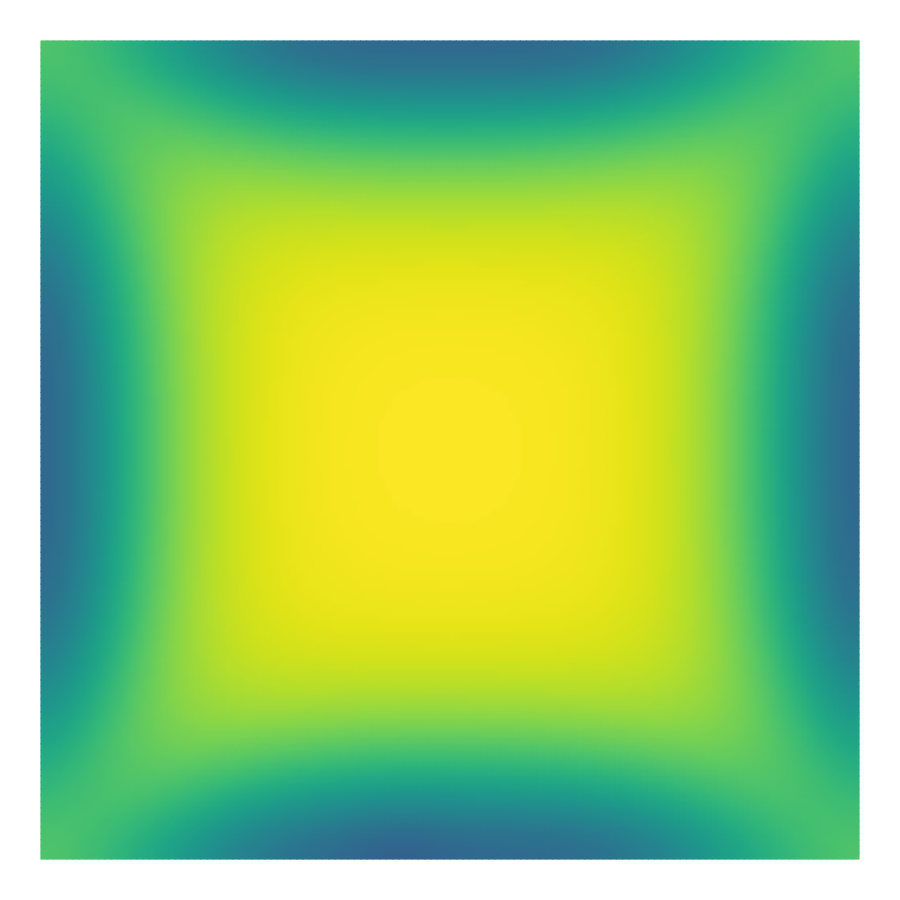}
        \end{minipage}
        \begin{minipage}{0.19\textwidth}
            \centering
            \includegraphics[width=\linewidth]{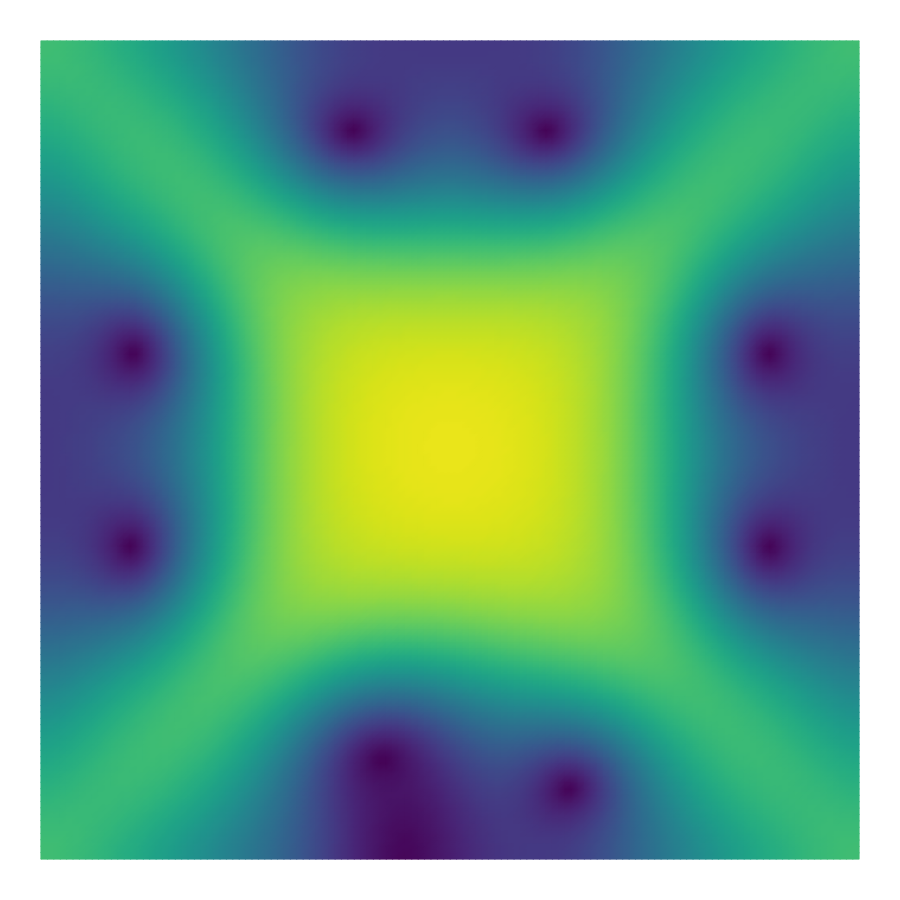}
        \end{minipage}
        \begin{minipage}{0.19\textwidth}
            \centering
            \includegraphics[width=\linewidth]{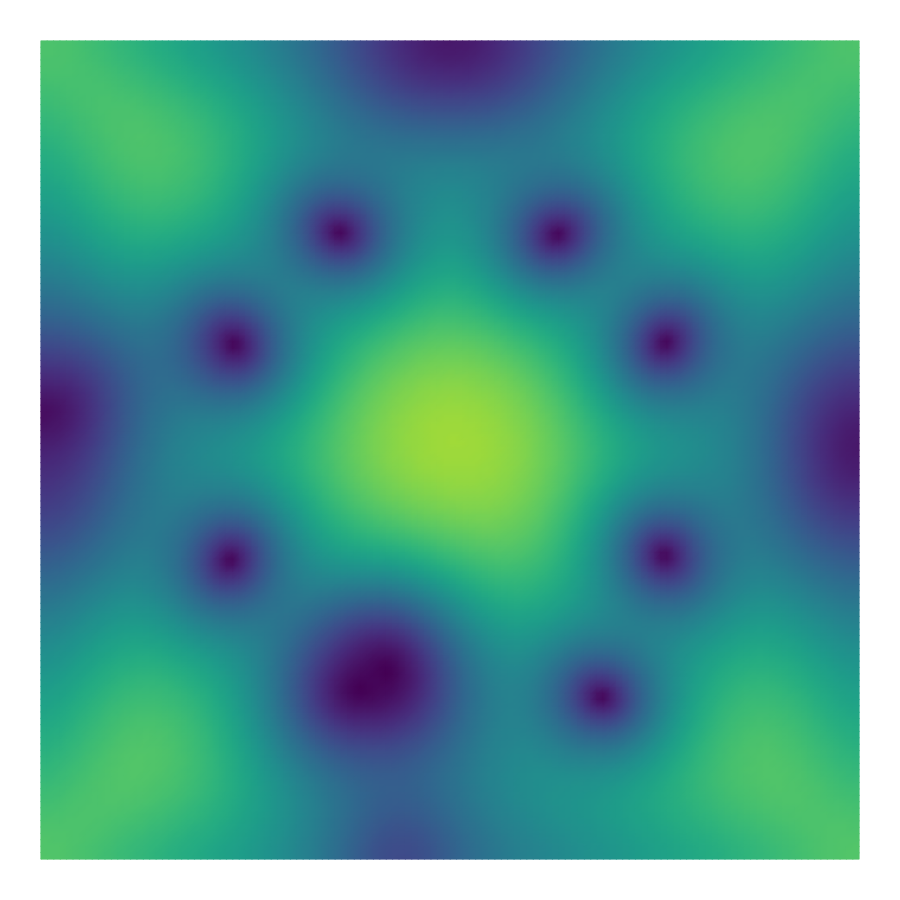}
        \end{minipage}
        \begin{minipage}{0.19\textwidth}
            \centering
            \includegraphics[width=\linewidth]{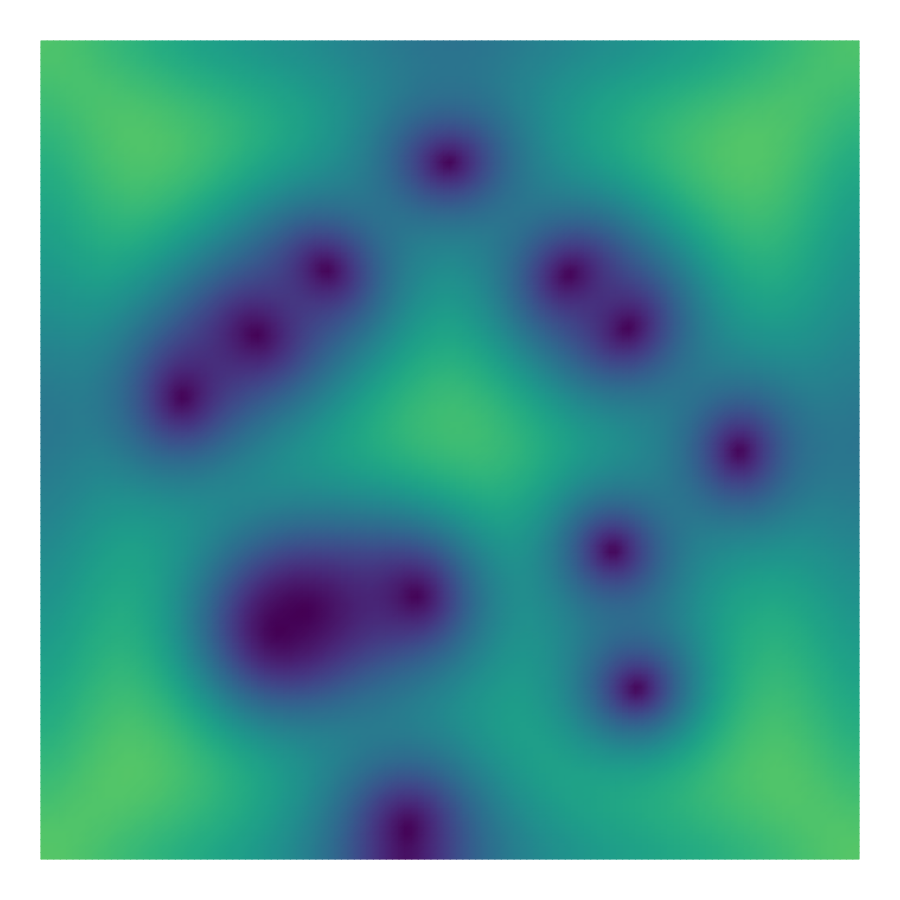}
        \end{minipage}
        \begin{minipage}{0.19\textwidth}
            \centering
            \includegraphics[width=\linewidth]{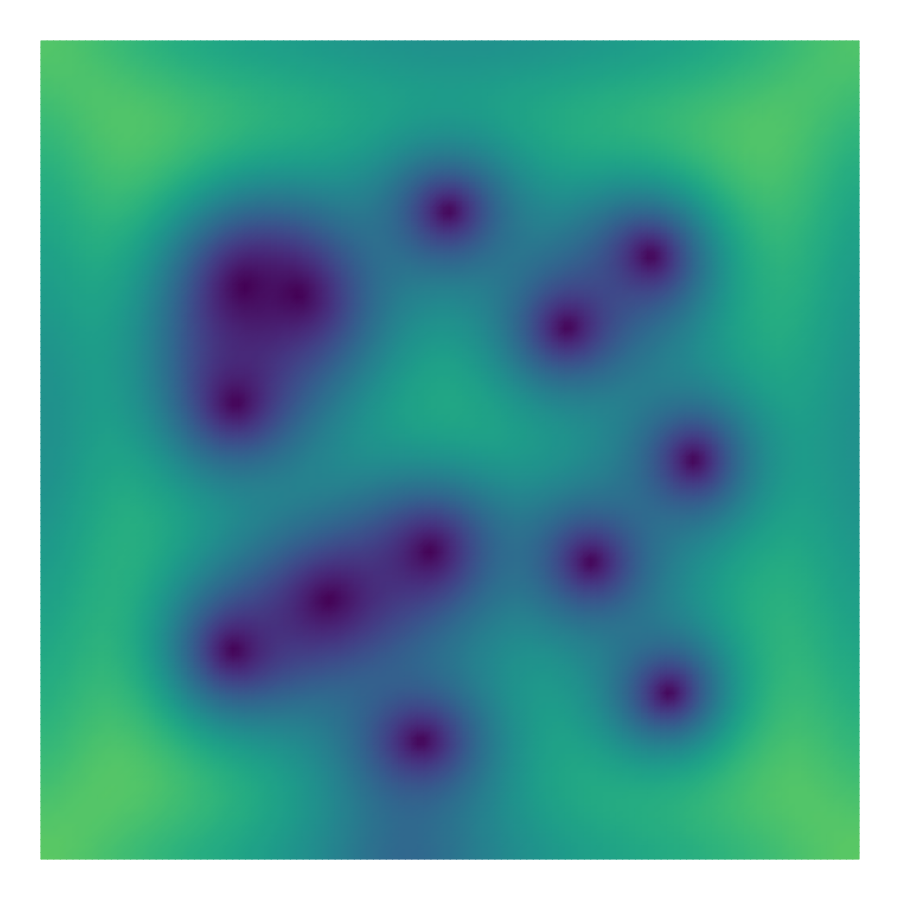}
        \end{minipage}

        \vspace{5pt}
        \begin{minipage}{0.19\textwidth}
            \centering
            \includegraphics[width=\linewidth]{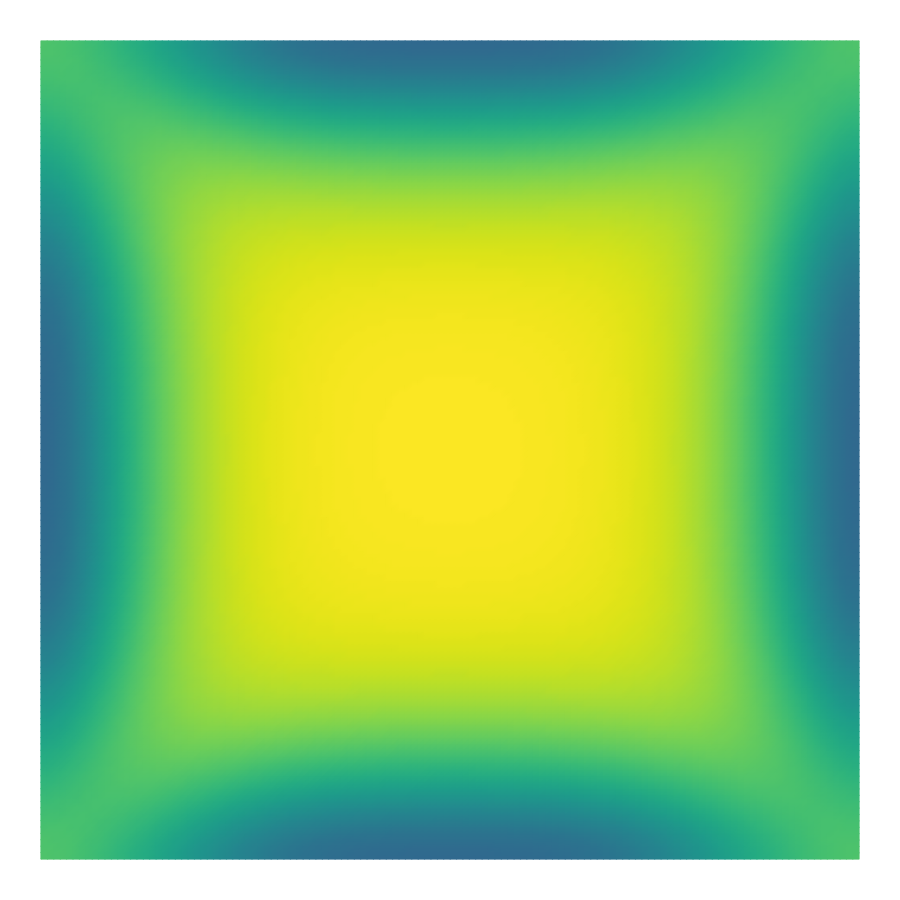}
        \end{minipage}
        \begin{minipage}{0.19\textwidth}
            \centering
            \includegraphics[width=\linewidth]{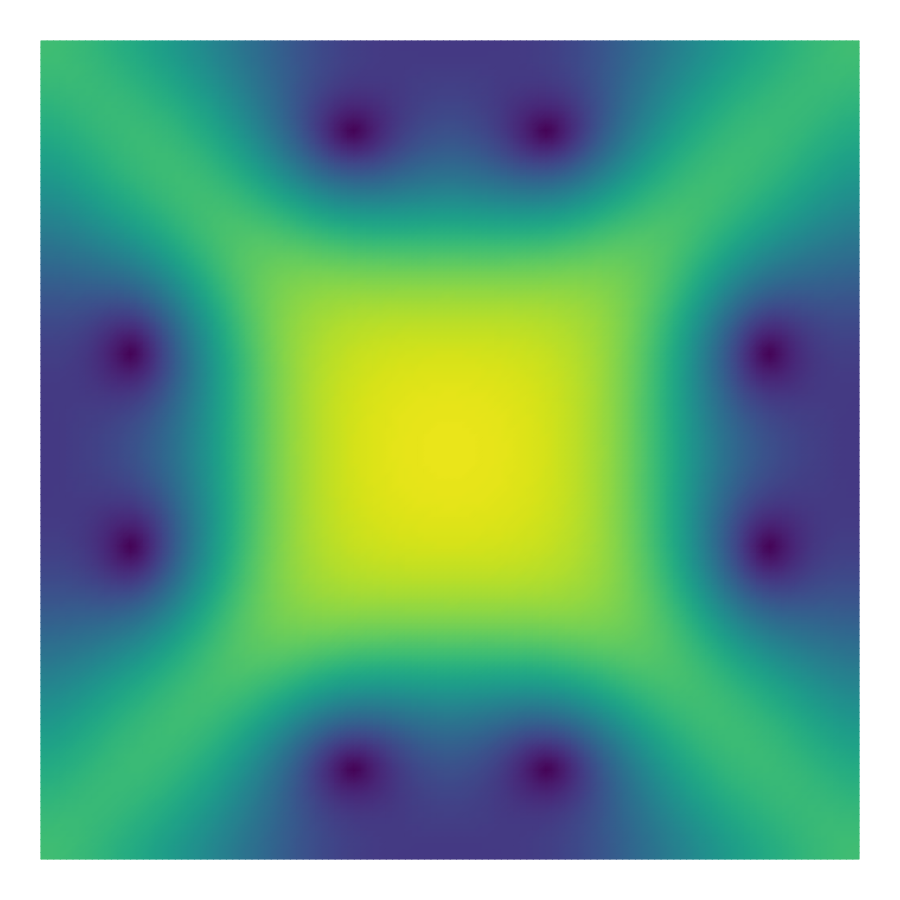}
        \end{minipage}
        \begin{minipage}{0.19\textwidth}
            \centering
            \includegraphics[width=\linewidth]{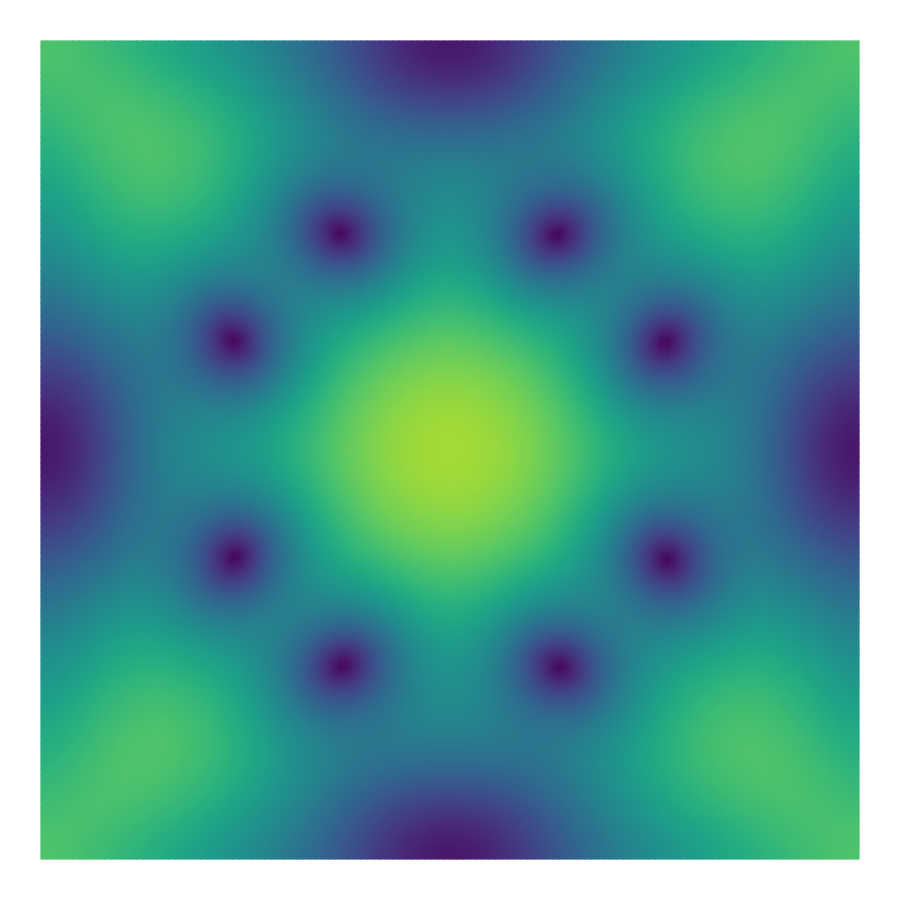}
        \end{minipage}
        \begin{minipage}{0.19\textwidth}
            \centering
            \includegraphics[width=\linewidth]{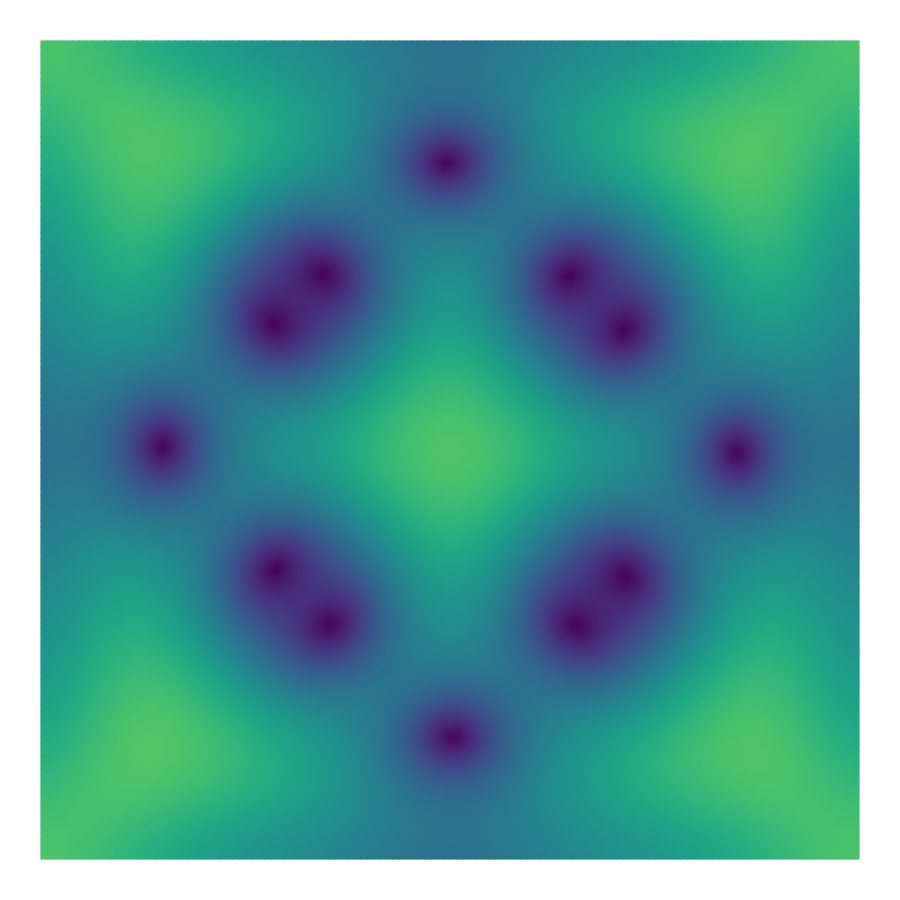}
        \end{minipage}
        \begin{minipage}{0.19\textwidth}
            \centering
            \includegraphics[width=\linewidth]{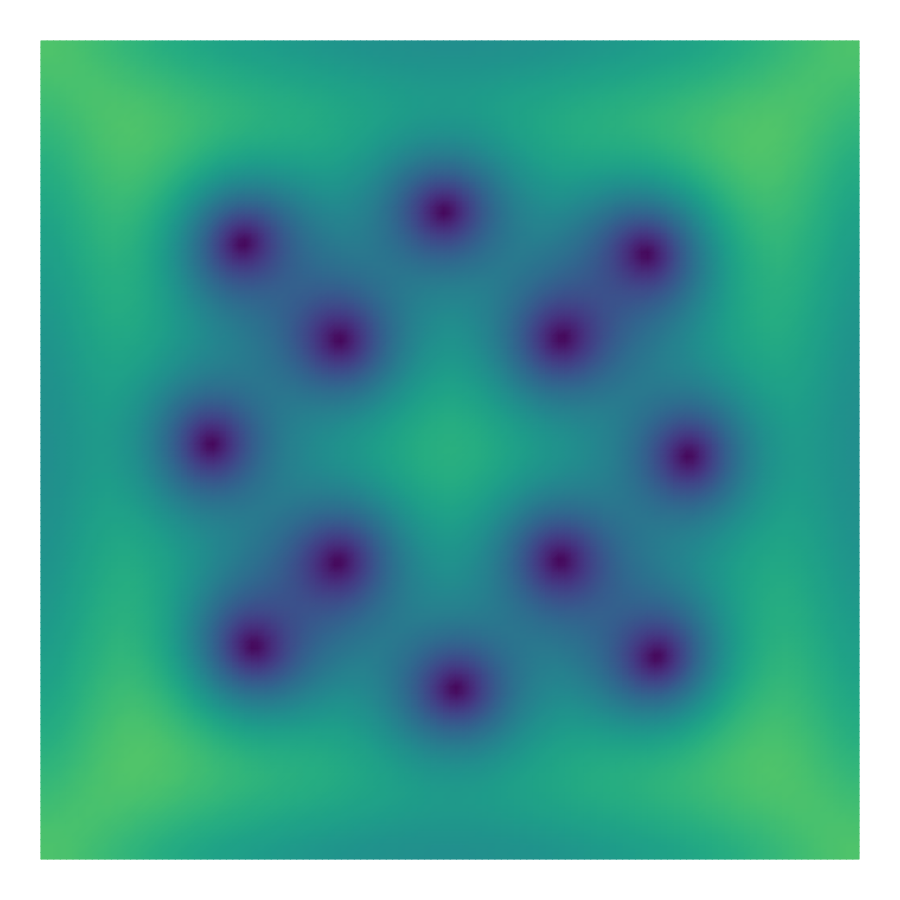}
        \end{minipage}
    \end{minipage}%
    \hspace{2pt}
    \begin{minipage}{0.035\textwidth}
        \centering
        \includegraphics[height=0.28\textheight]{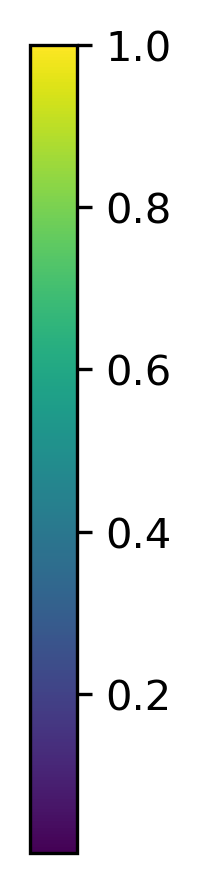}
    \end{minipage}

        \begin{minipage}{0.85\textwidth}
        \centering
        \begin{minipage}{0.19\textwidth}
            \centering
            \includegraphics[width=\linewidth]{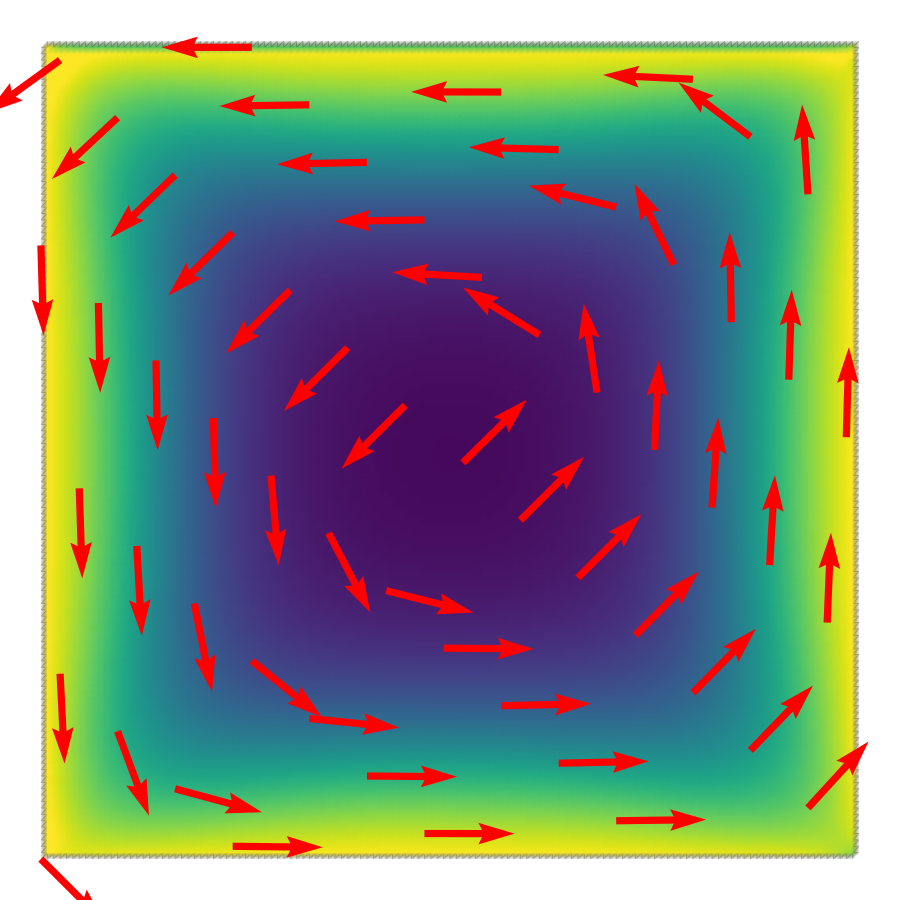}
        \end{minipage}
        \begin{minipage}{0.19\textwidth}
            \centering
            \includegraphics[width=\linewidth]{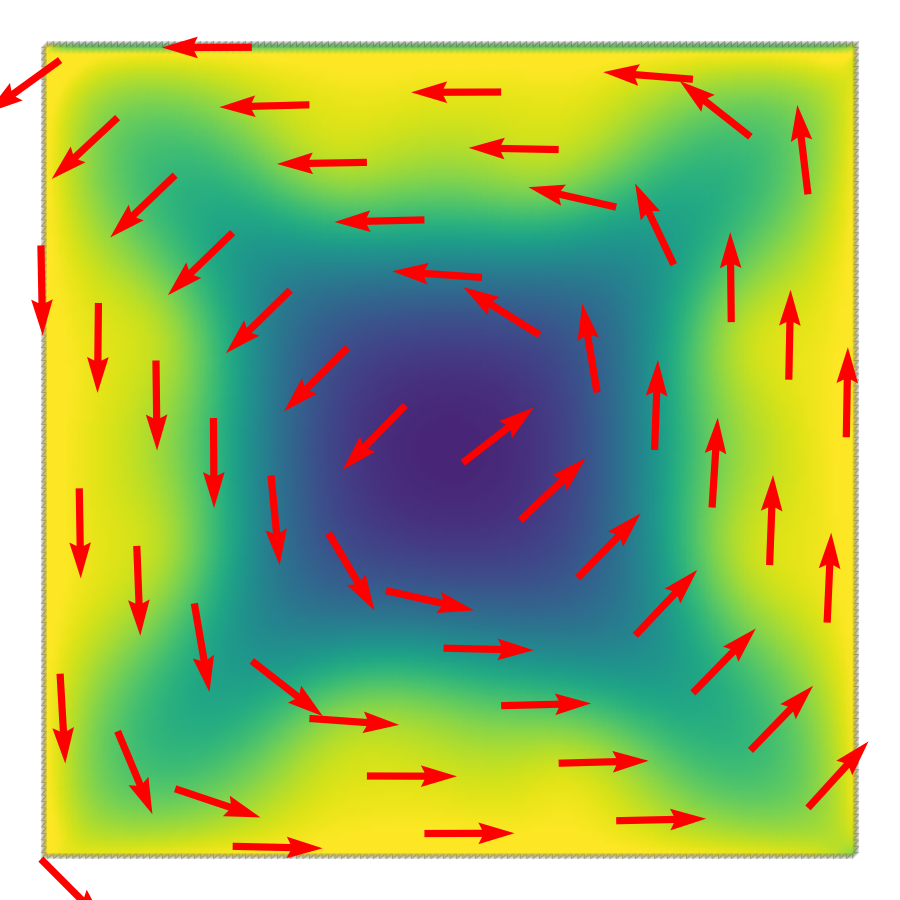}
        \end{minipage}
        \begin{minipage}{0.19\textwidth}
            \centering
            \includegraphics[width=\linewidth]{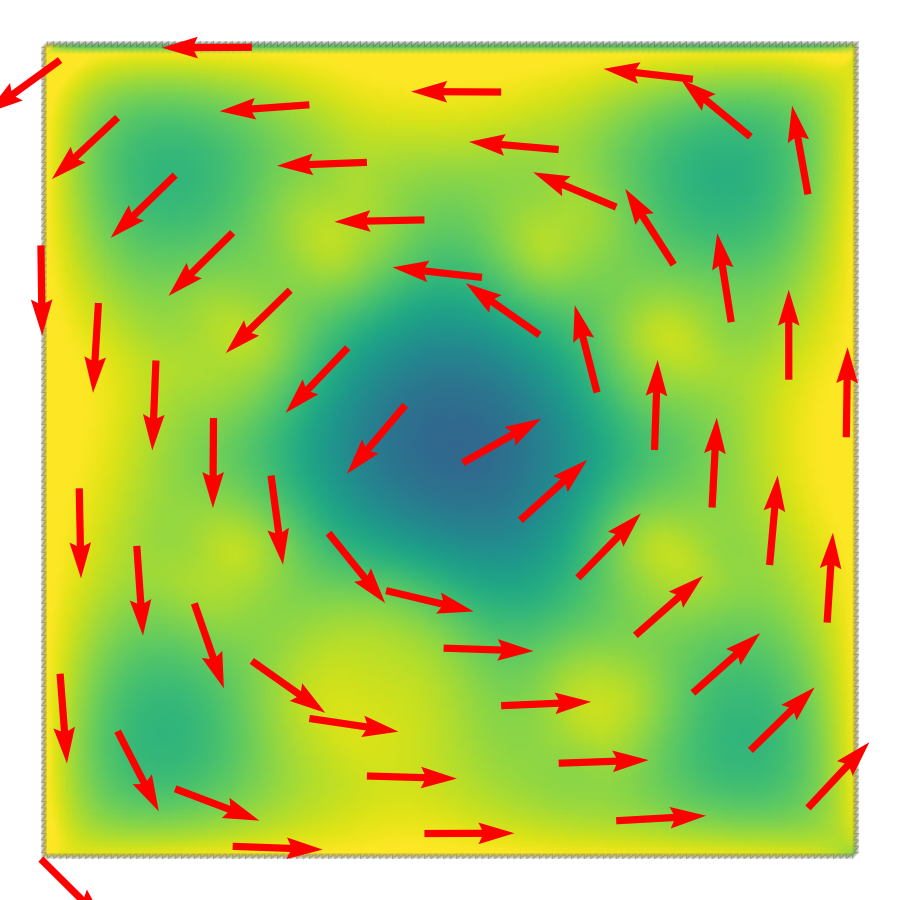}
        \end{minipage}
        \begin{minipage}{0.19\textwidth}
            \centering
            \includegraphics[width=\linewidth]{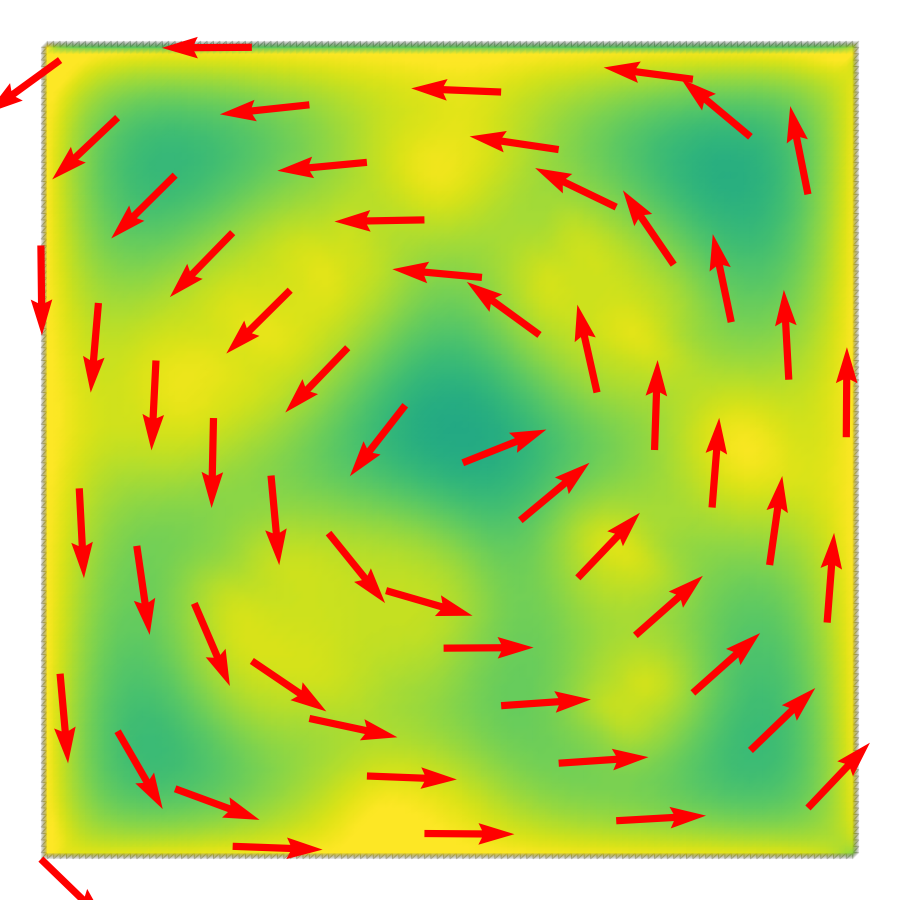}
        \end{minipage}
        \begin{minipage}{0.19\textwidth}
            \centering
            \includegraphics[width=\linewidth]{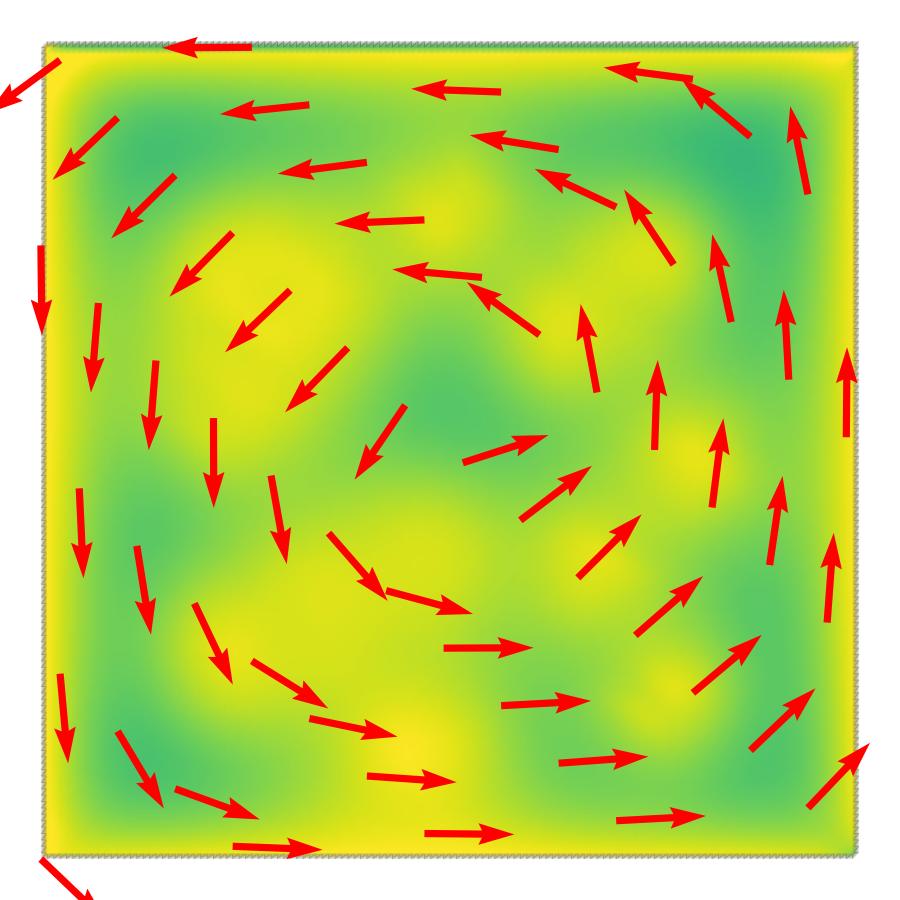}
        \end{minipage}

        \vspace{5pt}
        \begin{minipage}{0.19\textwidth}
            \centering
            \includegraphics[width=\linewidth]{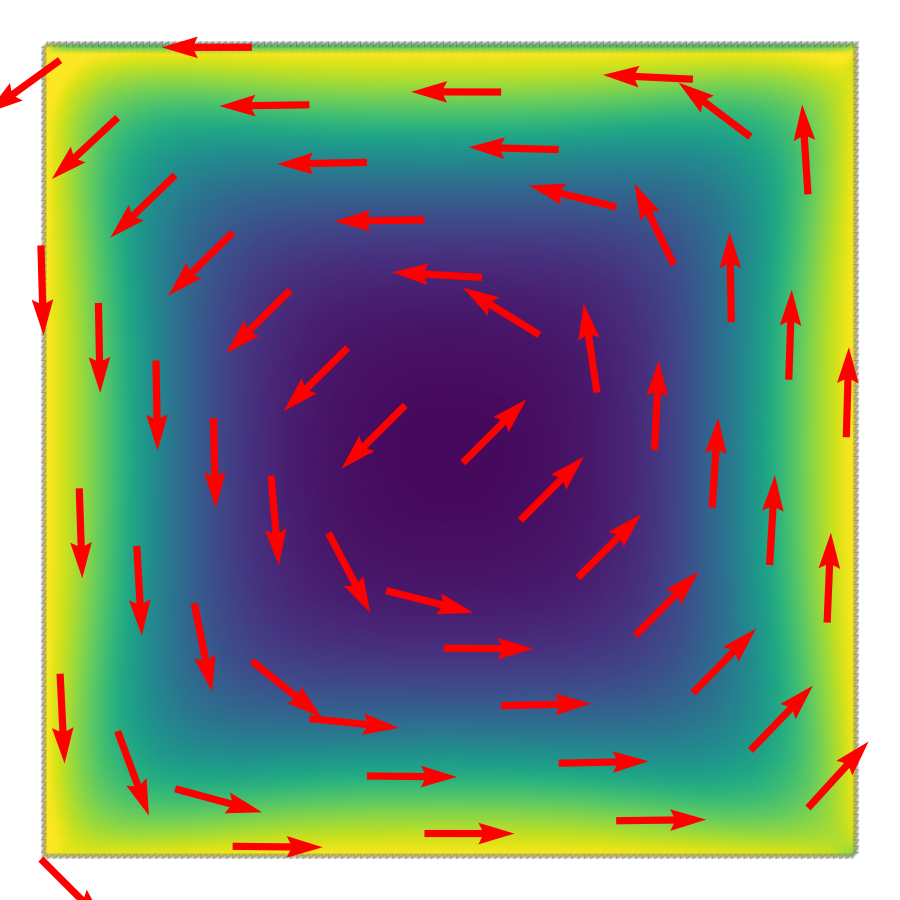}
        \end{minipage}
        \begin{minipage}{0.19\textwidth}
            \centering
            \includegraphics[width=\linewidth]{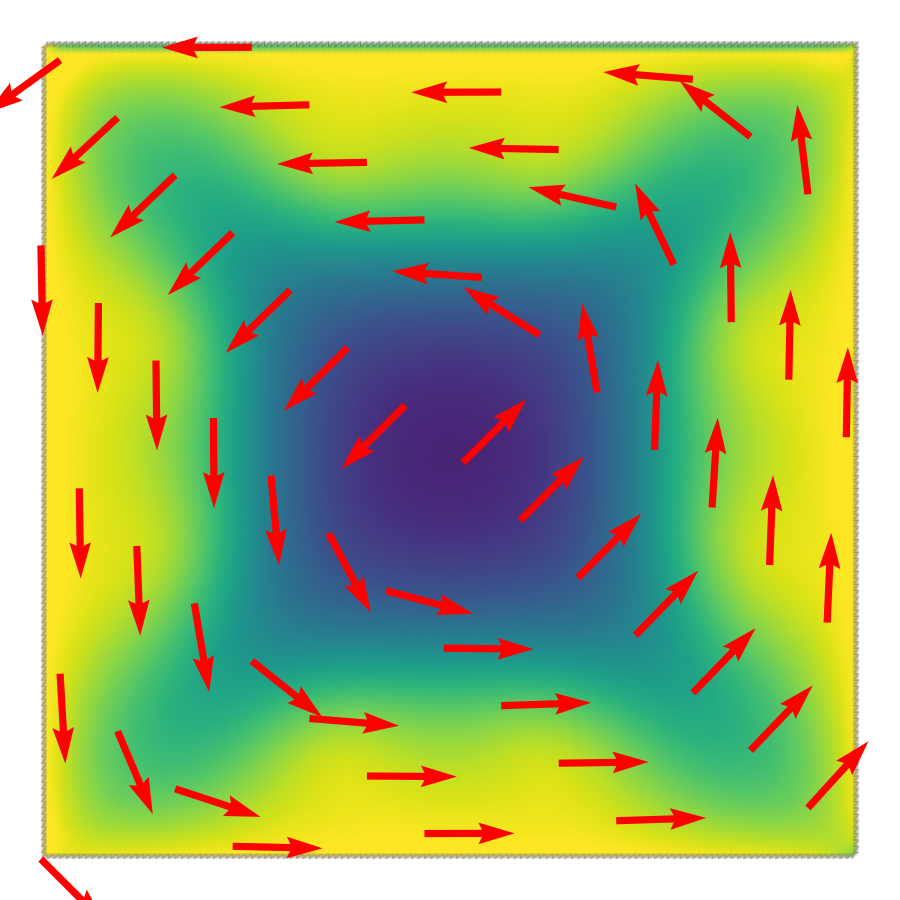}
        \end{minipage}
        \begin{minipage}{0.19\textwidth}
            \centering
            \includegraphics[width=\linewidth]{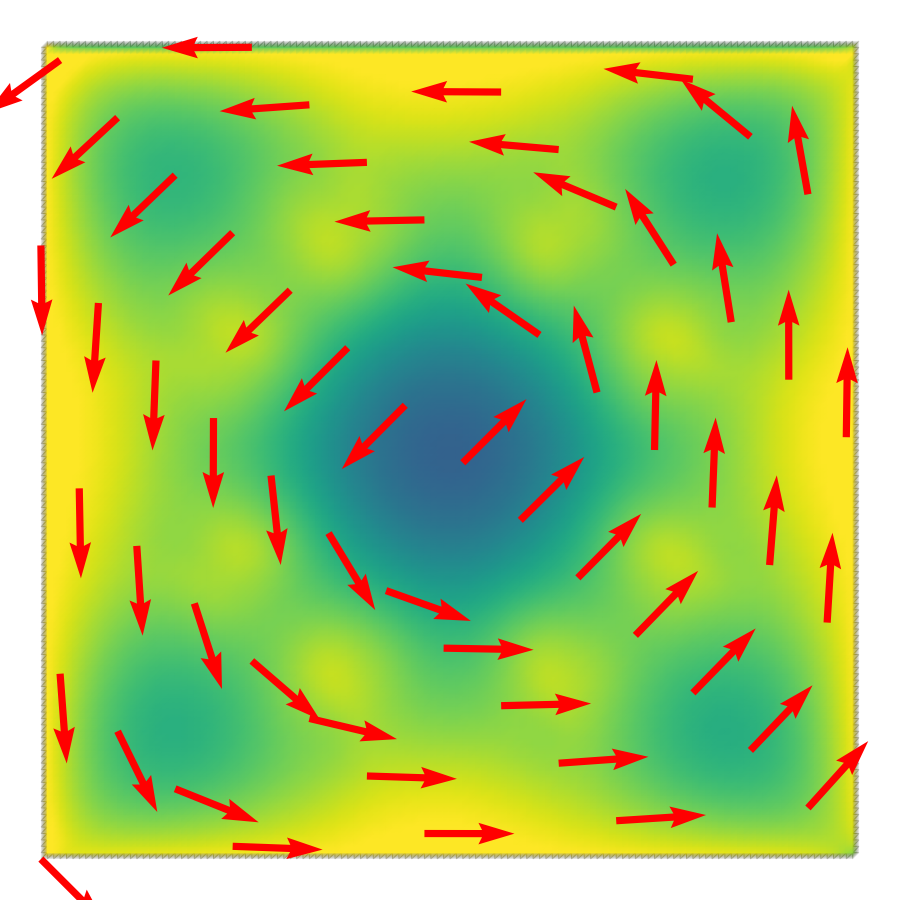}
        \end{minipage}
        \begin{minipage}{0.19\textwidth}
            \centering
            \includegraphics[width=\linewidth]{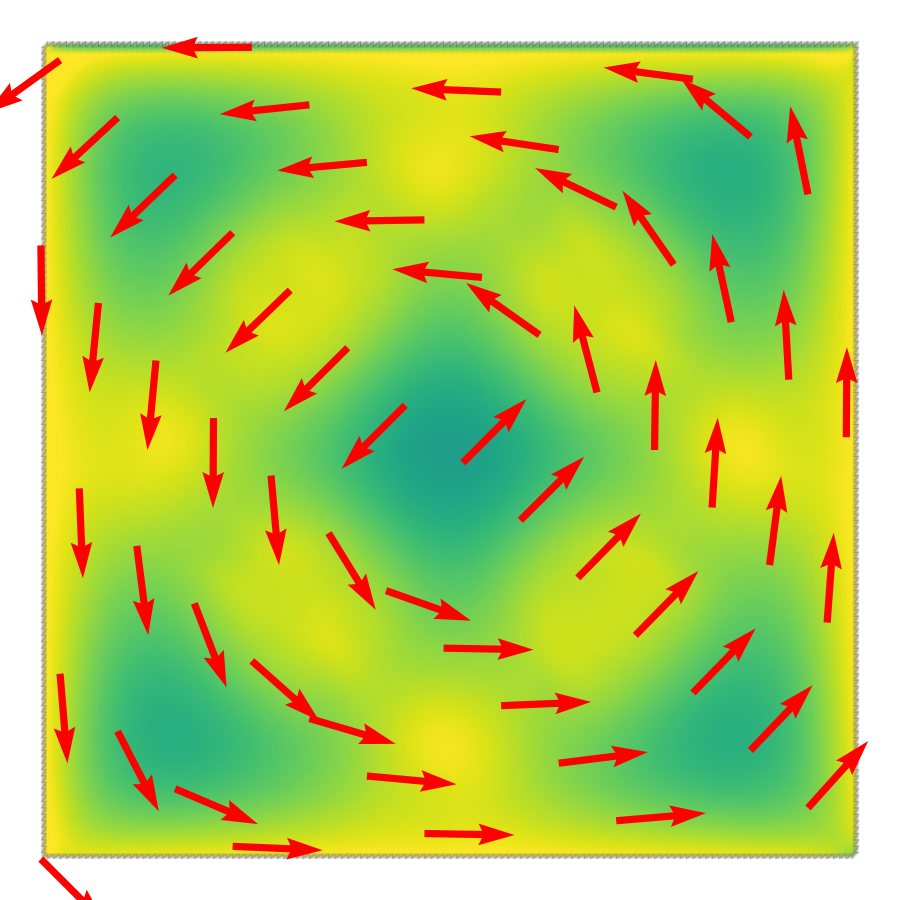}
        \end{minipage}
        \begin{minipage}{0.19\textwidth}
            \centering
            \includegraphics[width=\linewidth]{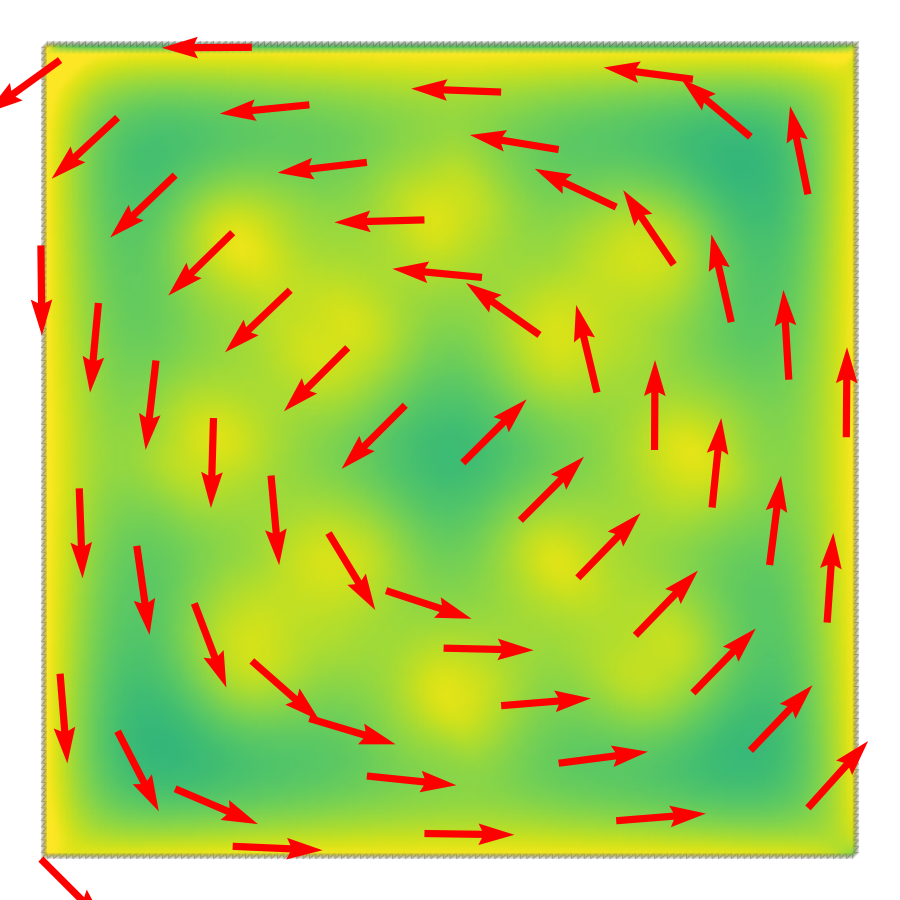}
        \end{minipage}
    \end{minipage}%
    \hspace{2pt}
    \begin{minipage}{0.035\textwidth}
        \centering
        \includegraphics[height=0.28\textheight]{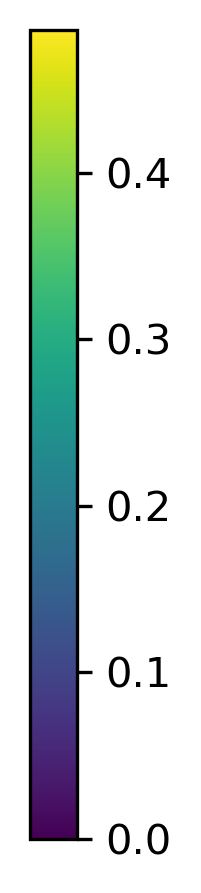}
    \end{minipage}
    
    \caption{Time evolution of vortex structures of $|\psi|$ and $curl A$ in $(-2\pi,2\pi)$ with GL constant $\kappa = 2$ under an external magnetic field along the (0,0,1) direction (t=20, 40, 60, 80, 100, and the arrows indicate the direction of the vector potential A). he stabilization parameter is $S = 2$. The singular and even rows correspond to the cases with and without the applied inhomogeneity potential, respectively.}
    \label{fig.4}
\end{figure}

The third numerical experiment (Figures~\ref{fig.3}--\ref{fig.4}) investigates the morphological evolution of magnetic vortices, comparing the cases with and without inhomogeneity. The corresponding discrete energy curves are shown in Figure~\ref{fig:ene-pair}, exhibiting monotone decay in accordance with the stability result of Theorem~\ref{thm-ene}. The simulations are performed with a time step $\tau=0.25$ and Ginzburg--Landau parameter $\kappa=2$ on domains $\Omega_k = (-k\pi,k\pi)^2$ for $k=1,2$. Both homogeneous and inhomogeneous cases are considered, with inhomogeneity $\delta$ generated randomly to highlight its influence on the structure and dynamics of magnetic flux vortices. A depinning transition can be clearly observed: for instance, from the second to the third snapshot in the first row of Figure~\ref{fig.3}, a vortex escapes from a local pinning site and begins to migrate. This transition is also reflected in the energy dissipation at around $t = 50$ shown in the left panel of Figure~\ref{fig:ene-pair}. Overall, the results demonstrate that inhomogeneity significantly alters vortex formation and distribution, leading to noticeable differences in simulations.

\begin{figure}[!htp]
    \centering
\centering

    \begin{minipage}{0.85\textwidth}
        \centering
        \begin{minipage}{0.27\textwidth}
            \includegraphics[width=\linewidth]{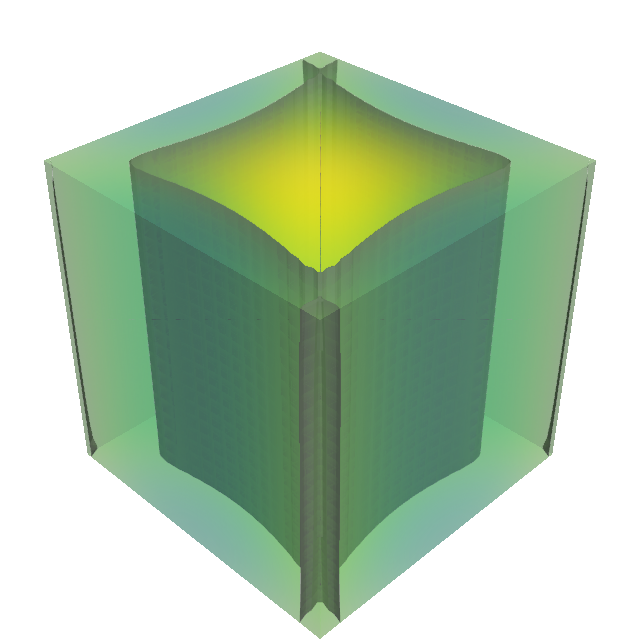}
        \end{minipage}
        \begin{minipage}{0.27\textwidth}
            \includegraphics[width=\linewidth]{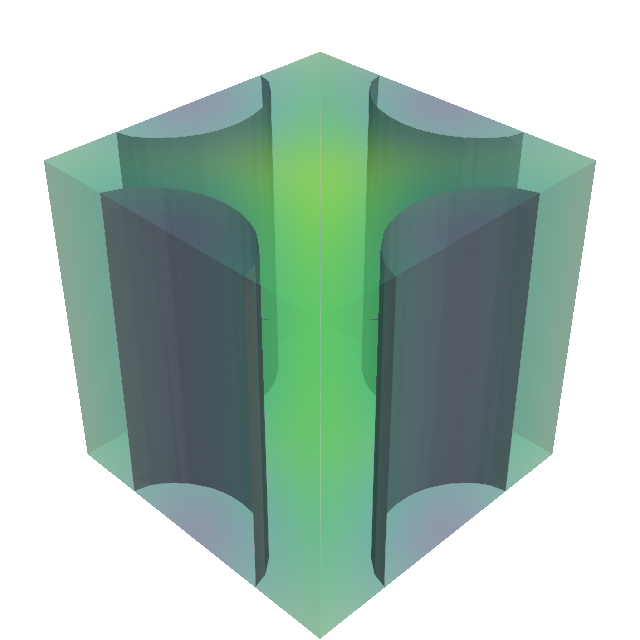}
        \end{minipage}
        \begin{minipage}{0.27\textwidth}
            \includegraphics[width=\linewidth]{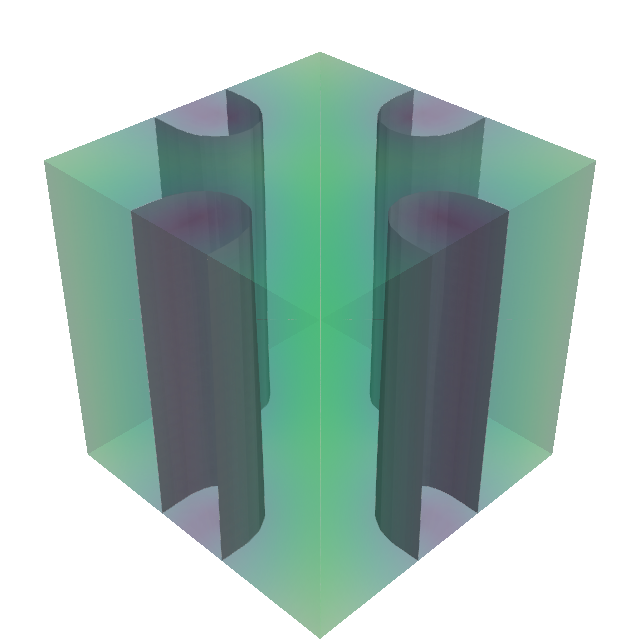}
        \end{minipage}

        \vspace{2pt}
        \begin{minipage}{0.27\textwidth}
            \includegraphics[width=\linewidth]{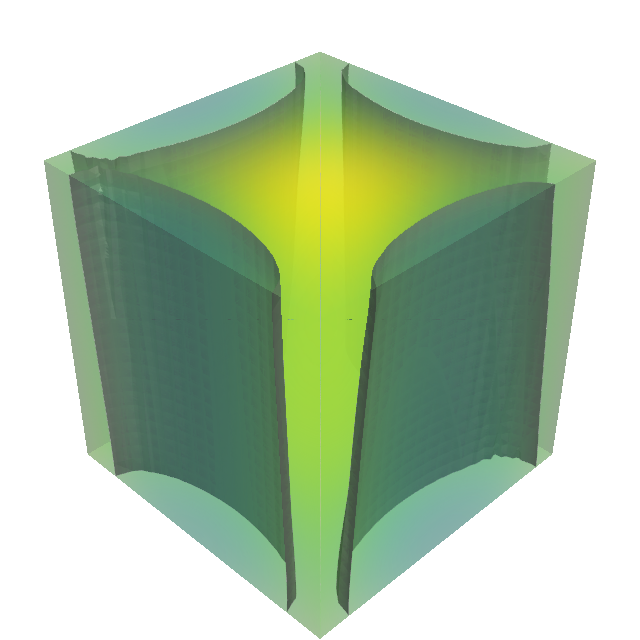}
        \end{minipage}
        \begin{minipage}{0.27\textwidth}
            \includegraphics[width=\linewidth]{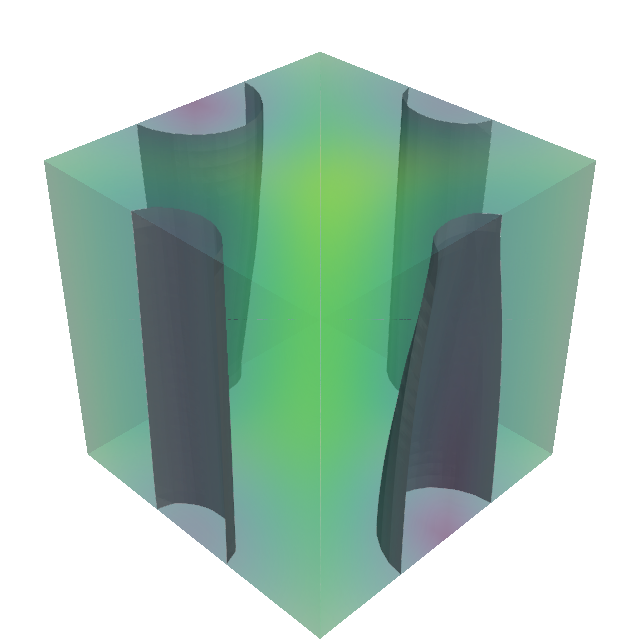}
        \end{minipage}
        \begin{minipage}{0.27\textwidth}
            \includegraphics[width=\linewidth]{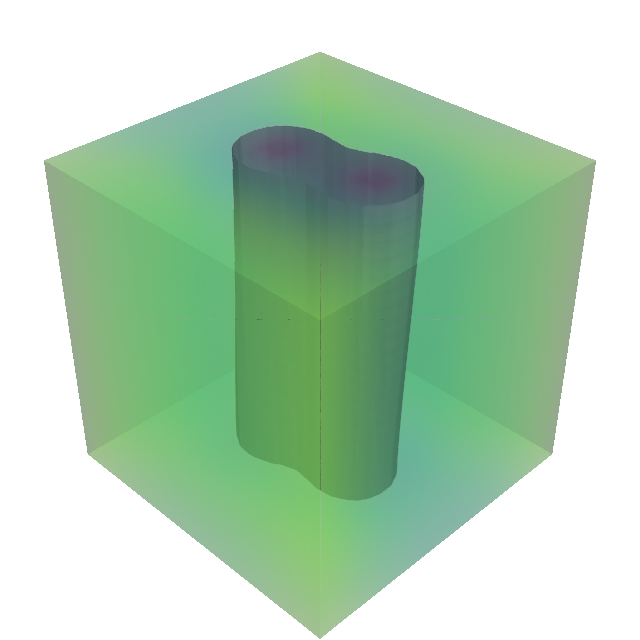}
        \end{minipage}

        \vspace{2pt}
        \begin{minipage}{0.27\textwidth}
            \includegraphics[width=\linewidth]{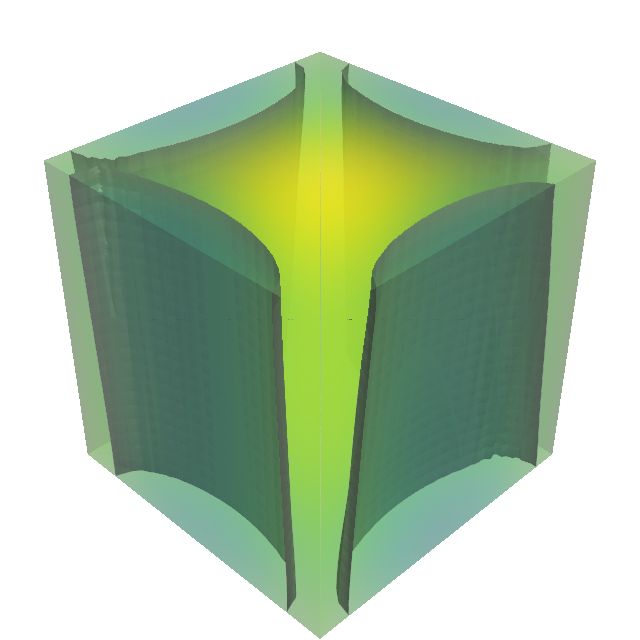}
        \end{minipage}
        \begin{minipage}{0.27\textwidth}
            \includegraphics[width=\linewidth]{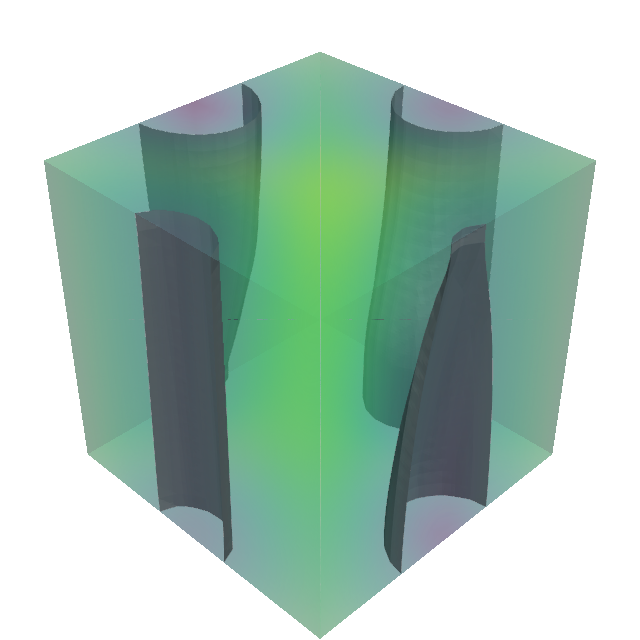}
        \end{minipage}
        \begin{minipage}{0.27\textwidth}
            \includegraphics[width=\linewidth]{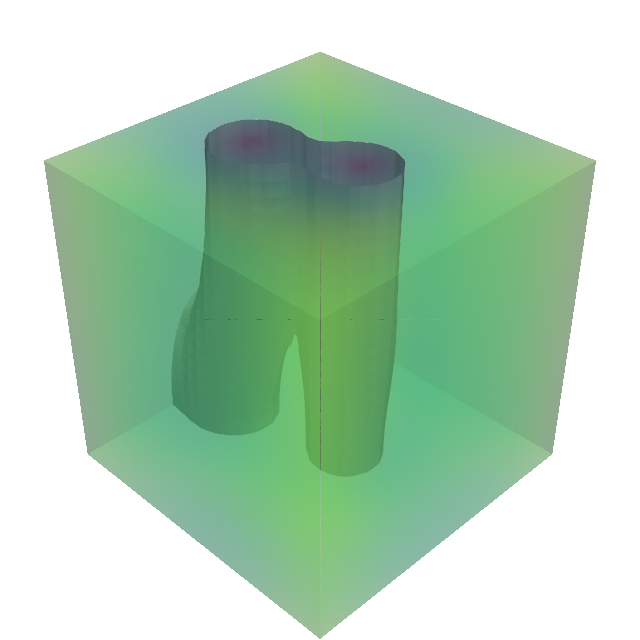}
        \end{minipage}
    \end{minipage}%
    \begin{minipage}{0.05\textwidth}
        \begin{minipage}{\linewidth}
            \centering
            \includegraphics[height=0.3\textheight]{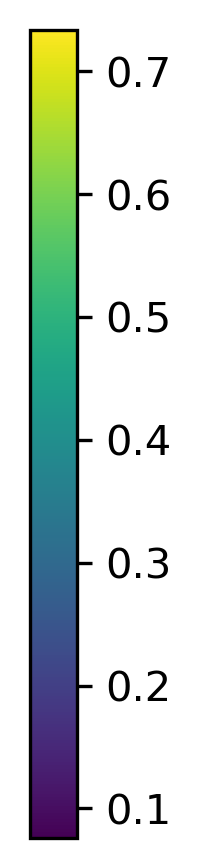}

            \includegraphics[height=0.095\textheight]{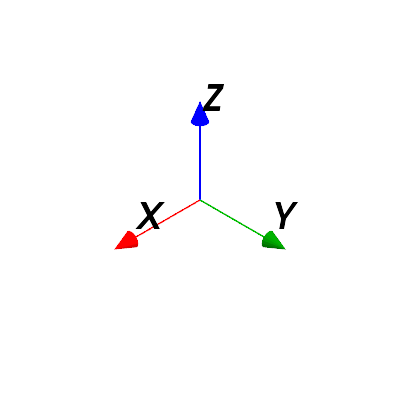}
        \end{minipage}
    \end{minipage}

\caption{
Time evolution of vortex structures in a 3D cubic domain under different applied magnetic fields.  
Top row: $\delta(x)=0$, $H=(0,\,0,\,0.5)$;  
middle row: $\delta(x)=0$, $H=(0,\,0.5\sin(\pi/36),\,0.5\cos(\pi/36))$;  
bottom row: $\delta(x)\neq0$, $H=(0,\,0.5\sin(\pi/36),\,0.5\cos(\pi/36))$.  
Contours are taken at the average of the maximum and minimum values. 
A comparison between rows 2 and 3 shows that inhomogeneity causes vortices to deviate 
from the otherwise aligned structures observed in the homogeneous case.
}
    \label{fig.5}
\end{figure}

\begin{figure}[htbp]
    \centering
    \setlength{\fboxsep}{0pt}  
    \begin{minipage}[t]{0.4\textwidth}
        \centering
        \includegraphics[width=\linewidth]{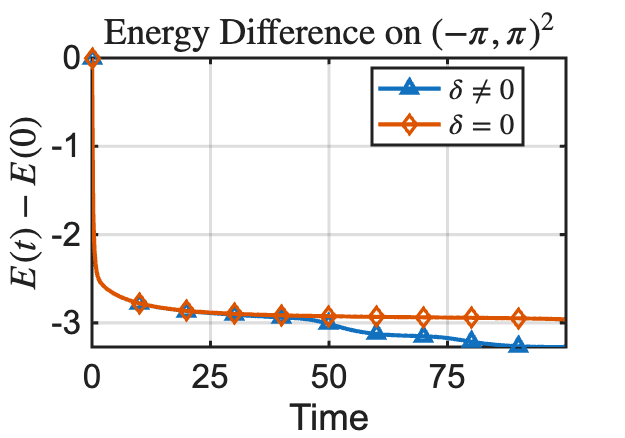}
    \end{minipage}
    \hspace{1mm} 
    \begin{minipage}[t]{0.4\textwidth}
        \centering
        \includegraphics[width=\linewidth]{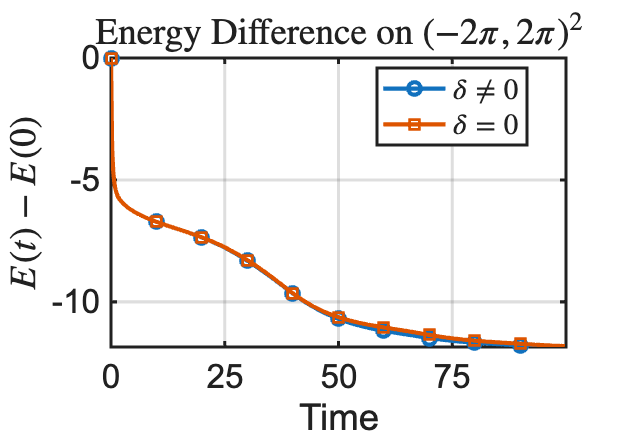}
    \end{minipage}
\caption{
Energy evolution for the simulations shown in Figures~\ref{fig.3} and \ref{fig.4}. 
The curves display the monotone decay of the discrete energy, confirming the stability result of Theorem~\ref{thm-ene}. 
They also demonstrate that depinning induced by inhomogeneity leads to a lower final energy level due to vortex dynamics.
}\label{fig:ene-pair}
    
\end{figure}

The last example shows the numerical simulation in the cubic domain $(-\pi,\pi)^3$. The first and third rows are the case without inhomogeneity, while the second and the fourth rows represent the case with a inhomogeneity, modeled by some randomly generated overlapping spheres. The simulations were performed on a 20x20x20 grid under a constant magnetic field along the $(0,0.5\sin(\pi/36),0.5\cos(\pi/36))$. The presence of inhomogeneity significantly affects the vortex dynamic, resulting in more complicated structures. Without inhomogeneity, the vortice lines tend to align approximately with the external magnetic field, forming well-structured patterns. However, when a inhomogeneity is introduced, the vortices experience some local pinning effects and deviate from the expected alignment. See Figure \ref{fig.5}.

\begin{remark}
{The numerical experiments presented here focus on validating the maximum modulus bound and energy dissipation properties of the scheme using simple test configurations. The implementation is based on FEniCS with default solvers and runs in serial; parallel execution is not yet enabled. More detailed numerical analysis and physically realistic simulations will be considered and compared in future work.}
\end{remark}

\section{Discussion and Conclusion}
\label{sec-con}
We have investigated a hybrid TDGL model for superconductors that extends the classical TDGL formulation by incorporating a nonlinear BCS term. To simulate this system reliably, we developed a structure-preserving IMEX scheme that maintains both a discrete maximum bound and energy stability. These properties ensure long-time robustness and physical consistency, which are essential for capturing vortex dynamics in extended simulations. An asymptotic analysis of the hybrid model is also included.

The scheme was tested in two- and three-dimensional numerical experiments, where it successfully reproduced vortex formation, alignment, and suppression under applied fields. The results show the importance of structure-preserving properties for reliable simulations. Future work will focus on developing asymptotically preserving numerical schemes. Such advances would further improve the accuracy and applicability of structure-preserving methods in complex superconducting systems.

\paragraph*{CReditT Author Contributions}
B. Wang: Conceptualization, Methodology, Formal analysis, Software Writing - original draft.
S. Shenoy:  Writing - review \& editing.
D. Fortino: Software
W. Hao: Methodology, Formal analysis, Writing - review \& editing.  
L.-Q. Chen: Conceptualization, Supervision.

\paragraph*{Acknowledgments} The authors would like to thank Dr. Fei Yang and Professor Yin Shi for their insightful discussions and constructive feedback that contributed to the development of this work.

\paragraph*{Data availability}
No experimental or observational data were used. The simulation code will be available upon request.

\end{document}